\documentclass[12pt,leqno]{article}  
\usepackage{amsmath,amssymb,amsthm,bbm}
\usepackage{array,mathdots}
\usepackage{titlesec} 
\usepackage[all,cmtip]{xy}
\usepackage{blkarray,arydshln} 

\textheight=22cm \textwidth=15.5cm \hoffset=-1cm \voffset=-1.5cm
\titlespacing{\section}{0cm}{3.5pc}{1.5pc}

\makeatletter
\def\@citex[#1]#2{\if@filesw\immediate\write\@auxout{\string\citation{#2}}\fi
  \def\@citea{}\@cite{\@for\@citeb:=#2\do
    {\@citea\def\@citea{\@citesep}\@ifundefined
       {b@\@citeb}{{\bf ?}\@warning
       {Citation `\@citeb' on page \thepage \space undefined}}%
{\csname b@\@citeb\endcsname}}}{#1}}
\def\@citesep{; }
\makeatother

\newtheoremstyle{Kang}{}{}{\itshape}{}{\bf}{}{.5em}{}
\theoremstyle{Kang}
\newtheorem{theorem}{Theorem}[section]

\newtheorem{lemma}[theorem]{Lemma}

\newtheoremstyle{Kremark}{}{}{}{}{\bf}{}{.5em}{}
\theoremstyle{Kremark}
\newtheorem*{remark}{Remark.}
\newtheorem{defn}[theorem]{Definition}
\newtheorem{example}[theorem]{Example}
\newtheorem{other}{}
\newenvironment{idef}[1]{\begin{other}}{\end{other}}

\allowdisplaybreaks[1]  

\def\fn#1{\operatorname{#1}} 
\def\bm#1{\mathbbm{#1}}
\def\c#1{\mathcal{#1}}

\title{Noether's Problem for Some Semidirect Products}

\author{Ming-chang Kang$^{(1)}$ and Jian Zhou$^{(2)}$ \\[3mm]
\begin{minipage}{16cm} \begin{description} \itemsep=-1pt
\item[] $^{(1)}$Department of Mathematics, National Taiwan University, Taipei\\ E-mail: kang@math.ntu.edu.tw
\item[] $^{(2)}$School of Mathematical Sciences, Peking University, Beijing\\ E-mail: zhjn@math.pku.edu.cn
\end{description} \end{minipage}}
\date{}

\begin{document}

\maketitle

\footnote{\textit{\!\!\! $2010$ Mathematics Subject
Classification}. 14E08, 12F10, 13A50, 11R29.}
\footnote{\textit{\!\!\! Keywords and phrases}. Noether's problem, rationality problem, algebraic tori, class groups.}

\begin{abstract}
{\noindent\bf Abstract.}
Let $k$ be a field, $G$ be a finite group, $k(x(g):g\in G)$ be the rational function field with the variables $x(g)$ where $g\in G$.
The group $G$ acts on $k(x(g):g\in G)$ by $k$-automorphisms where $h\cdot x(g)=x(hg)$ for all $h,g\in G$. Let $k(G)$ be the fixed field defined by $k(G):=k(x(g):g\in G)^G=\{f\in k(x(g):g\in G): h\cdot f=f$ for all $h\in G\}$. Noether's problem asks whether the fixed field $k(G)$ is rational (= purely transcendental) over $k$.
Let $m$ and $n$ be positive integers and assume that there is an integer $t$ such that $t\in (\bm{Z}/m\bm{Z})^\times$ is of order $n$. Define a group $G_{m,n}:=\langle\sigma,\tau:\sigma^m=\tau^n=1,\tau^{-1}\sigma\tau=\sigma^t\rangle$ $\simeq C_m \rtimes C_n$. We will find a sufficient condition to guarantee that $k(G)$ is rational over $k$. As a result, it is shown that, for any positive integer $n$, the set $S:=\{p: p$ is a prime number such that $\bm{C}(G_{p,n})$ is rational over $\bm{C} \}$ is of positive Dirichlet density; in particular, $S$ is an infinite set.
\end{abstract}

\section{Introduction}

Let $k$ be any field, $G$ be a finite group and $G\to GL(V)$ be a faithful linear representation of $G$
where $V$ is a finite-dimensional vector space over $k$. Then $G$ acts naturally on the function field $k(V)$ by $k$-automorphisms. Noether's problem asks whether the fixed field $k(V)^G$ is rational (= purely transcendental) over $k$. In particular, when $V=V_{\rm reg}$ is the regular representation, we will write $k(G):=k(V_{\rm reg})^G$. Explicitly, $k(V_{\rm reg}):=k(x(g):g\in G)$ is the rational function field in the variables $x(g)$ (where $g \in G$), $G$ acts on $k(V_{\rm reg})$ by $k$-automorphisms defined by $h\cdot x(g)=x(hg)$ for any $h, g\in G$, and $k(G)=k(V_{\rm reg})^G= \{f\in k(V_{\rm reg}):h\cdot f=f$ for all $h\in G\}$. Note that Noether's problem is a special case of the famous L\"uroth problem.

When the group $G$ is abelian and the field $k$ contains enough roots of unity, the following theorem of Fischer guarantees that $k(G)$ is rational.

\begin{theorem}[Fischer {\cite[Theorem 6.1]{Sw2}}] \label{t1.5}
Let $G$ be a finite abelian group of exponent $e$, $k$ be a field containing $\zeta_e$, a primitive $e$-th root of unity.
For any finite-dimensional representation $G\to GL(V)$ over $k$,
the fixed field $k(V)^G$ is rational over $k$.
\end{theorem}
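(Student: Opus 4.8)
The plan is to diagonalize the action and then reduce the problem to elementary lattice combinatorics, closing with a degree count.

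First I would observe that the hypothesis $\zeta_e\in k$ forces $\fn{char}k\nmid e$, and since $e$ and $|G|$ have the same prime divisors, Maschke's theorem applies: the representation $V$ is completely reducible over $k$. Because $G$ is abelian and $k$ contains all $e$-th roots of unity, every irreducible summand is one-dimensional, so the dual $V^*$ admits a basis $x_1,\dots,x_n$ of simultaneous eigenvectors. In these coordinates $k(V)=k(x_1,\dots,x_n)$ and each $g\in G$ acts by $g\cdot x_i=\chi_i(g)\,x_i$ for some character $\chi_i$ of $G$ with values in $\mu_e\subseteq k^\times$. Thus the action becomes purely multiplicative (diagonal) with all scalars already lying in $k$, which is exactly what the root-of-unity hypothesis buys us.

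Next I would pass to the monomial lattice. Writing $x^a=x_1^{a_1}\cdots x_n^{a_n}$ for $a\in\bm{Z}^n$, each Laurent monomial is a common eigenvector, with $g\cdot x^a=\bigl(\prod_i\chi_i(g)^{a_i}\bigr)x^a$, so $x^a$ is $G$-invariant precisely when $a$ lies in the kernel $M$ of the homomorphism $\phi:\bm{Z}^n\to\widehat{G}$ sending $a\mapsto\prod_i\chi_i^{a_i}$. Since $\widehat{G}$ is finite, the image $N:=\fn{im}\phi$ is finite, so $M$ is a sublattice of finite index $[\bm{Z}^n:M]=|N|$ and hence free of rank $n$. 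I would choose a $\bm{Z}$-basis $m_1,\dots,m_n$ of $M$ and set $u_j=x^{m_j}$; these are algebraically independent $G$-invariant Laurent monomials, so $k(u_1,\dots,u_n)$ is a purely transcendental subfield contained in $k(V)^G$.

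Finally I would match degrees to force equality. On one hand $k(x_1,\dots,x_n)=\fn{Frac}k[\bm{Z}^n]$ is a finite extension of $k(u_1,\dots,u_n)=\fn{Frac}k[M]$ of degree $[\bm{Z}^n:M]$, a set of coset representatives of $\bm{Z}^n/M$ furnishing a spanning set of monomials. On the other hand the $G$-action factors through the faithful quotient $\bar G=G/\bigcap_i\ker\chi_i$, and by Artin's theorem $k(x_1,\dots,x_n)/k(V)^G$ is Galois of degree $|\bar G|$; the annihilator duality for finite abelian groups identifies $\bar G$ with $\widehat{N}$, giving $|\bar G|=|N|=[\bm{Z}^n:M]$. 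Since $k(u_1,\dots,u_n)\subseteq k(V)^G\subseteq k(x_1,\dots,x_n)$ with the two outer extensions of equal degree, both inclusions are equalities, whence $k(V)^G=k(u_1,\dots,u_n)$ is rational over $k$.

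The step I expect to be the main obstacle is the concluding degree bookkeeping: one must verify carefully that the Laurent-monomial index $[\fn{Frac}k[\bm{Z}^n]:\fn{Frac}k[M]]$ really equals $[\bm{Z}^n:M]$ with no collapse (this is where $\fn{char}k\nmid|G|$ is silently used to keep the eigencharacters distinct), and that this index coincides with the order of the faithful quotient $\bar G$ through the duality $\bar G\cong\widehat{N}$. Everything else is formal once the diagonalization of the action over $k$ is in place.
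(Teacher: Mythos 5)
The paper does not prove this statement; it is quoted as Fischer's classical theorem with a citation to Swan's survey \cite[Theorem 6.1]{Sw2}. Your argument is correct and is essentially the standard proof found there: diagonalize the action using $\zeta_e\in k$ and $\operatorname{char}k\nmid |G|$, identify the invariants with the fraction field of the group algebra of the finite-index sublattice $M=\ker\phi\subset\bm{Z}^n$ (free of rank $n$, hence rational), and confirm equality by matching the index $[\bm{Z}^n:M]=|N|$ against the Galois degree $|\bar G|=|\widehat{N}|$ via character duality.
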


When $G$ is abelian and $k$ is any field (e.g. $k=\bm{Q}$), the rationality problem of $k(G)$ was investigated by Swan, Endo and Miyata, Voskresenskii, Lenstra, etc.. Swan's survey paper \cite{Sw2} gives an excellent account of  Noether's problem for abelian groups.

Now we turn to Noether's problem for non-abelian groups. We define the group $G_{m,n}$ first.

\begin{defn} \label{d1.2}
Let $m$ and $n$ be positive integers and assume that there is an integer $t$ such that $t\in (\bm{Z}/m\bm{Z})^\times$ is of order $n$. Define a group $G_{m,n}:=\langle\sigma,\tau:\sigma^m=\tau^n=1,\tau^{-1}\sigma\tau=\sigma^t\rangle$ $\simeq C_m \rtimes C_n$.

We remark that the integer $t$ always exists provided that $m$ is an odd prime power and $n \mid \phi(m)$. Also note that the group $G_{m,n}$ depends on the choice of $t$. However, for different choices of $t$, the rationality criterion we are concerned about (e.g. Theorem \ref{t4.7}) is not affected, which may be justified by applying Lemma \ref{l3.3}. Thus we will not emphasize the dependence of $G_{m,n}$ on the choice of $t$.
\end{defn}

We will study under what situation the fixed field $\bm{C}(G_{m,n})$ will be rational. The following theorem is quite useful, provided that the ring $\bm{Z}[\zeta_n]$ is a UFD (unique factorization domain).

\begin{theorem}[{\cite[Theorem 1.4]{Ka2}}] \label{t1.1}
Let $k$ be a field and $G$ be a finite group.
Assume that (i) $G$ contains an abelian normal subgroup $H$ so that $G/H$ is cyclic of order $n$,
(ii) $\bm{Z}[\zeta_n]$ is a UFD where $\zeta_n$ is a primitive $n$-th root of unity,
and (iii) $\zeta_e\in k$ where $e:=\fn{lcm}\{\fn{ord} (g):g\in G\}$ is the exponent of $G$.
If $G\to GL(V)$ is any finite-dimensional linear representation of $G$ over $k$,
then $k(V)^G$ is rational over $k$.
\end{theorem}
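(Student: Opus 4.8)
The plan is to convert the linear action of $G$ on $k(V)$ into a multiplicative (torus) action, and then to read off rationality from the $\bm{Z}[C_n]$-module structure of the resulting character lattice; the hypothesis that $\bm{Z}[\zeta_n]$ is a UFD (equivalently, since $\bm{Z}[\zeta_n]$ is the Dedekind ring of integers of $\bm{Q}(\zeta_n)$, a PID of trivial class group) is precisely what will kill the arithmetic obstruction to rationality. First I would diagonalize $H$. Because $\fn{exp}(H)\mid e$ and $\zeta_e\in k$ by hypothesis (iii), Fischer's Theorem \ref{t1.5} applied to the abelian group $H$ lets me choose coordinates $x_1,\dots,x_N$ on $V^\ast$ that are simultaneous $H$-eigenvectors, so $h\cdot x_i=\chi_i(h)\,x_i$ for characters $\chi_i\in\operatorname{Hom}(H,k^\times)$. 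Writing $\Lambda=\bigoplus_i\bm{Z}e_i$ and identifying the monomial $\prod_i x_i^{a_i}$ with $\sum_i a_i e_i$, the field $k(V)$ becomes the function field $k(\Lambda)$ of a split torus with character lattice $\Lambda$, on which $H$ acts through the homomorphism $\Lambda\to\widehat H$, $e_i\mapsto\chi_i$.

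Next I would bring in the quotient $G/H\cong C_n$. Choosing $\tau\in G$ that lifts a generator and using that $H$ is normal in $G$, conjugation by $\tau$ permutes the characters $\chi_i$, so $\tau$ carries each $H$-eigenline to another eigenline. After grouping eigenvectors sharing a common character and normalizing the scalar factors --- here one uses $\tau^n\in H$ together with the roots of unity already present in $k$ --- the whole of $G$ acts on $k(\Lambda)$ by monomial automorphisms, i.e. by automorphisms of the torus. Fischer's theorem applied to $H$ shows that $k(V)^H$ is rational over $k$, and the induced $C_n$-action on $k(V)^H$ is again a torus action, recorded by a $\bm{Z}[C_n]$-lattice $L$ built from $\Lambda$ and the map $\Lambda\to\widehat H$. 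Thus the whole problem reduces to proving that the multiplicative invariant field $k(L)^{C_n}$ is rational over $k$.

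The hard part will be this last reduction. Rationality of $k(L)^{C_n}$ is a question about algebraic tori, and the obstruction is encoded by the class of $L$ as a $\bm{Z}[C_n]$-module through its flasque/coflasque resolution, in the sense of Endo--Miyata, Voskresenskii and Lenstra. Decomposing the action of $\bm{Z}[C_n]$ via the cyclotomic quotients $\bm{Z}[\zeta_d]$ with $d\mid n$, the torsion-free constituents that occur are (fractional) ideals of these rings. Since $h_{\bm{Q}(\zeta_d)}\mid h_{\bm{Q}(\zeta_n)}$ for $d\mid n$, the UFD ($=$ trivial class group) hypothesis on $\bm{Z}[\zeta_n]$ forces every $\bm{Z}[\zeta_d]$ to be a PID as well, so each such ideal is free; consequently $L$ is a (quasi-)permutation $\bm{Z}[C_n]$-lattice and the associated torus is rational, giving the rationality of $k(L)^{C_n}$ and hence of $k(V)^G$. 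The two technical points I expect to be delicate are the precise normalization of the scalar cocycle appearing in the monomial action (which must be absorbed using the roots of unity in $k$) and the exact identification of the lattice $L$, so that the principal-ideal property translates cleanly into the permutation property needed for genuine rationality rather than merely stable rationality.
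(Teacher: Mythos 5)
Your overall strategy --- diagonalize $H$ using $\zeta_e\in k$, reduce to a monomial action of $C_n$ on a lattice $L$, and let the vanishing of the class groups $C(\bm{Z}[\zeta_d])$ for all $d\mid n$ (which does follow from $\bm{Z}[\zeta_n]$ being a UFD, via \cite[page 39, Proposition 4.11]{Wa}) kill the obstruction --- is the same strategy the paper uses in its in-house analogues (the paper only quotes Theorem \ref{t1.1} from \cite{Ka2}; the nearest proofs it actually gives are those of Theorem \ref{t4.3}, Lemma \ref{l4.4} and Lemma \ref{l4.12}). But two of your steps do not go through as written. The passage to a monomial action is not merely a matter of ``normalizing the scalar factors'': if an $H$-character occurs in $V^*$ with multiplicity $\ge 2$, the elements of $G$ stabilizing that character act on the multiplicity space by arbitrary invertible matrices, not monomially. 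The needed fix is the No-Name Lemma (Theorem \ref{t2.6}): extract a $G$-stable faithful subspace spanned by selected eigenvectors on which the action is genuinely monomial, and absorb the remaining coordinates into $G$-invariant free variables. Relatedly, your $k(L)^{C_n}$ is a purely monomial invariant field (trivial action on constants), not the function field of a torus, so Theorems \ref{t2.5} and \ref{t4.1} do not apply to it directly; one must first adjoin a faithful permutation summand and descend, as in Steps 2--3 of the proof of Theorem \ref{t4.3} or in Lemma \ref{l4.12}.

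The more serious gap is the endgame. Showing that $L$ is quasi-permutation (equivalently, $[L]^{fl}=0$) proves only that the torus is \emph{stably} rational; for general lattices the implication ``stably rational $\Rightarrow$ rational'' is not known, and the theorem asserts genuine rationality. The missing ingredient is that the lattice arising here is a \emph{projective} $\bm{Z}[C_n]$-module (in the paper's setting it is a direct sum of Masuda ideals $\langle\tau-t,m\rangle$, projective by \cite[Proposition 7.1]{Sw1} or \cite[Proposition 2.6]{EM1}); only then does Endo--Miyata's Theorem \ref{t4.1} convert stable rationality into rationality, by reducing everything to the freeness of the $\bm{Z}[\zeta_d]$-modules $L/\Phi_d(\tau)L$ --- and it is exactly there, in forcing a projective module over the Dedekind ring $\bm{Z}[\zeta_d]$ to be free, that the UFD hypothesis is actually consumed. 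You correctly flagged ``permutation versus stable rationality'' as the delicate point, but the projectivity argument and the appeal to Theorem \ref{t4.1} are what resolve it, and they are absent from your proposal.
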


It is unknown for a long time whether $\bm{C}(G_{p,q})$ is rational or not if $p$ and $q$ are distinct prime numbers and $\bm{Z}[\zeta_q]$ is not a UFD. We note that the assumption that $\bm{Z}[\zeta_n]$ is a UFD in Theorem \ref{t1.1} imposes a severe restriction to the integer $n$, because of the theorem of Masley and Montgomery.

\begin{theorem}[Masley and Montgomery \cite{MM}] \label{t1.2}
$\bm{Z}[\zeta_n]$ is a unique factorization domain if and if $1\le n\le 22$,
or $n=24, 25, 26, 27, 28, 30, 32, 33, 34, 35, 36, 38, 40, 42, 44, 45, 48, 50, 54$, $60, 66, 70, 84, 90$.
\end{theorem}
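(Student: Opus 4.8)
The plan is to reduce the unique-factorization statement to a statement about class numbers and then to control those class numbers by combining effective analytic lower bounds with explicit computation. Since $\bm{Z}[\zeta_n]$ is the full ring of integers of the cyclotomic field $\bm{Q}(\zeta_n)$, it is a Dedekind domain, and a Dedekind domain is a UFD if and only if it is a PID if and only if its ideal class group is trivial. Hence the theorem is equivalent to the assertion that the class number $h_n$ of $\bm{Q}(\zeta_n)$ equals $1$ exactly for the listed values of $n$. As a preliminary organizing tool I would record the divisibility $h_n \mid h_m$ whenever $n \mid m$ (together with the convention $h_n=h_{2n}$ for odd $n$), so that the search for fields of class number $1$ can be localized around prime-power and small composite conductors and so that ruling out one conductor automatically rules out all its multiples.

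Next I would use the factorization $h_n = h_n^{+}\cdot h_n^{-}$, where $h_n^{+}$ is the class number of the maximal real subfield $\bm{Q}(\zeta_n)^{+}=\bm{Q}(\zeta_n+\zeta_n^{-1})$ and $h_n^{-}$ is the relative (first-factor) class number. The relative factor is the analytically tractable one, since it is given by the closed formula
\[
h_n^{-} = Q_n\, w_n \prod_{\chi \text{ odd}} \left(-\frac{1}{2}\,B_{1,\chi}\right),
\]
a finite product over the odd primitive Dirichlet characters modulo $n$, where $Q_n\in\{1,2\}$, $w_n$ is the number of roots of unity, and $B_{1,\chi}$ is the generalized Bernoulli number. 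From this formula I would extract an \emph{effective} lower bound for $h_n^{-}$ as a function of $n$, using that each $|L(1,\chi)|=|B_{1,\chi}|$ admits a clean lower estimate and that the number of odd characters grows with $\phi(n)$. The aim of this step is to prove $h_n^{-}>1$ for every $n$ outside an explicitly bounded finite range, which immediately forces $h_n>1$ there and cuts the problem down to a finite list of candidate conductors.

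For the finitely many survivors I would first evaluate $h_n^{-}$ exactly from the Bernoulli-number product (a finite determinant computation), discarding every $n$ with $h_n^{-}>1$, and then face the genuinely delicate residue: deciding whether $h_n^{+}=1$ for the remaining conductors. Here there is no closed formula, so I would bound $h_n^{+}$ via the Minkowski convex-body bound applied inside the real subfield $\bm{Q}(\zeta_n)^{+}$. When that bound is small enough, every ideal class is represented by an ideal of small norm, and verifying that each such prime is principal — for instance by exhibiting explicit cyclotomic units or short generators — yields $h_n^{+}=1$ and hence $h_n=1$.

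The step I expect to be the main obstacle is precisely this real-subfield analysis. The lower bound for $h_n^{-}$ involves no unit regulator and is therefore both effective and clean, but $h_n^{+}$ is entangled with the regulator $R_n^{+}$ through the analytic class number formula, so a naive Brauer–Siegel estimate is ineffective and cannot by itself pin down the finite list. Making the bound on $h_n^{+}$ simultaneously effective and sharp enough to resolve every borderline conductor — rather than leaving an undetermined residual set — is the crux, and it is what forces the careful case-by-case work (including the explicit determination of fundamental systems of units) that separates the full classification from the comparatively easy statement that $h_n=1$ holds for only finitely many $n$.
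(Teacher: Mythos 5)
This statement is imported into the paper as a black box: Theorem \ref{t1.2} is cited from Masley and Montgomery \cite{MM} and no proof of it appears anywhere in the text, so there is no internal argument to compare yours against. What you have written is, in outline, a faithful reconstruction of the architecture of the \emph{original} Masley--Montgomery proof: the reduction from UFD to $h_n=1$ via the Dedekind property of $\bm{Z}[\zeta_n]$, the divisibility $h_n\mid h_m$ for $n\mid m$ as a pruning device, the splitting $h_n=h_n^{+}h_n^{-}$, an effective analytic lower bound on $h_n^{-}$ to reduce to a finite list, and a Minkowski-type verification of $h_n^{+}=1$ for the survivors. So the strategy is sound and correctly identifies where the difficulty lives.

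As a proof, however, it has genuine gaps rather than merely omitted routine detail: every quantitative step is a placeholder. You do not produce the effective lower bound on $h_n^{-}$ (which is the heart of \cite{MM}; note that lower-bounding $|L(1,\chi)|$ for real $\chi$ runs directly into the Siegel-zero issue, so ``clean and effective'' is not automatic and requires the specific device of controlling at most one exceptional character), you do not exhibit the resulting finite list of candidate conductors, you do not carry out the $h_n^{-}$ evaluations, and you do not perform the $h_n^{+}=1$ verifications --- for the largest surviving conductors the naive Minkowski bound is already too weak and one needs sharper discriminant bounds plus explicit principality checks. Since the ``only if'' direction (that every $n$ outside the list has $h_n>1$) rests entirely on these unexecuted estimates, the proposal establishes neither direction of the theorem; it is a correct roadmap to the literature, which is exactly how the paper itself treats the result.
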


A recent work of Chu and Huang \cite{CH} found a sufficient condition for the rationality of $\bm{C}(G_{m,q})$ where $q$ is a prime number.

\begin{theorem}[Chu and Huang {\cite[Main Theorem]{CH}}] \label{t1.4}
Let $m$ and $q$ be positive integers where $q$ is a prime number and assume that there is an integer $t$ such that $t\in(\bm{Z}/m\bm{Z})^\times$ is of order $q$. Define $m'=m/\gcd\{m,t-1\}$.
Assume that there exist integers $a_0,a_1,\ldots,a_{q-2},b$ such that $\gcd\{a_0,a_1,\ldots,a_{q-2},b\}=1$,
$bm'=a_0+a_1t+\cdots+a_{q-2}t^{q-2}$ and $N_{\bm{Q}(\zeta_q)/\bm{Q}}(\alpha)=m'$ where $\alpha:=a_0+a_1\zeta_q+\cdots+a_{q-2}\zeta_q^{q-2}$.
If $k$ is a field with $\zeta_m, \zeta_q \in k$, then $k(G_{m,q})$ is rational over $k$.

In particular, for distinct prime numbers $p$ and $q$ with $q \mid \phi(p)$, if there is an element $\alpha \in \bm{Z}[\zeta_q]$ such that $N_{\bm{Q}(\zeta_q)/\bm{Q}}(\alpha)=p$ and $k$ is a field with $\zeta_p, \zeta_q \in k$, then $k(G_{p,q})$ is rational over $k$.

\end{theorem}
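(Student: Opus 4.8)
The plan is to carry out the usual two-step strategy for Noether's problem over a group with an abelian normal subgroup — first linearize the normal part, then descend by the residual cyclic quotient — and to isolate the single arithmetic input supplied by the norm hypothesis. Write $H=\langle\sigma\rangle\simeq C_m\trianglelefteq G_{m,q}$, with cyclic quotient $G_{m,q}/H=\langle\bar\tau\rangle\simeq C_q$. Since $\zeta_m\in k$, I would first diagonalize the $H$-action on $V_{\mathrm{reg}}$: as an $H$-module $V_{\mathrm{reg}}$ is $q$ copies of the regular representation of $H$, so there are coordinates $y_{i,j}$ with $i\in\bm{Z}/m\bm{Z}$ and $0\le j\le q-1$ on which $\sigma\cdot y_{i,j}=\zeta_m^{\,i}\,y_{i,j}$, while $\tau^{-1}\sigma\tau=\sigma^t$ forces $\tau\cdot y_{i,j}=y_{ti,\,j+1}$ (indices mod $m$, resp.\ mod $q$). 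Passing to $H$-invariants replaces $k(y_{i,j})$ by a rational field carrying a purely monomial action of $C_q=\langle\bar\tau\rangle$, in which $\bar\tau$ cyclically permutes the copy-index $j$ and twists the character-index $i$ by multiplication by $t$.

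The second step is to reduce, via the no-name lemma and the standard reductions for multiplicative actions, the rationality of $k(G_{m,q})$ to that of $k(M)^{C_q}$, where $M$ is the $\bm{Z}[C_q]$-lattice recording the above monomial action and $k(M)$ is the function field of the associated $C_q$-torus. The combinatorics of multiplication-by-$t$ on $\bm{Z}/m\bm{Z}$ then enters. Setting $d=\gcd\{m,t-1\}$ and $m'=m/d$, the module $\bm{Z}/m\bm{Z}$ (with $\tau$ acting as $t$) decomposes, up to a finite gluing, into a summand on which $\bar\tau$ acts trivially — of order $d$, which is precisely the $\sigma$-part of $G_{m,q}^{\mathrm{ab}}$ and is rational by Fischer's Theorem \ref{t1.5} — together with a faithful $\bm{Z}[\zeta_q]$-module of order $m'$, namely $\bm{Z}[\zeta_q]/\mathfrak{a}$ where $\mathfrak{a}=\ker\bigl(\bm{Z}[\zeta_q]\to\bm{Z}/m'\bm{Z},\ \zeta_q\mapsto t\bigr)$. (This reduction map is well defined because the order of $t$ forces $m'\mid\Phi_q(t)$.) The whole problem thus collapses to the rationality of the torus attached to the ideal $\mathfrak{a}\subset\bm{Z}[\zeta_q]$, whose flabby class is governed by the ideal class of $\mathfrak{a}$ in $\bm{Z}[\zeta_q]$.

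The heart of the argument is to read the norm hypothesis as saying that $\mathfrak{a}$ is principal. The relation $bm'=a_0+a_1t+\cdots+a_{q-2}t^{q-2}$ says exactly that $\alpha=a_0+a_1\zeta_q+\cdots+a_{q-2}\zeta_q^{q-2}$ lies in $\mathfrak{a}$, while $N_{\bm{Q}(\zeta_q)/\bm{Q}}(\alpha)=m'$ gives $\lvert\bm{Z}[\zeta_q]/(\alpha)\rvert=m'=\lvert\bm{Z}[\zeta_q]/\mathfrak{a}\rvert$; from $(\alpha)\subseteq\mathfrak{a}$ with equal finite index I conclude $\mathfrak{a}=(\alpha)$. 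This is precisely the relaxation of the UFD hypothesis of Theorem \ref{t1.1}: instead of demanding that every ideal of $\bm{Z}[\zeta_q]$ be principal, one asks only that this one ideal be principal. Multiplication by $\alpha$ then realizes $\bm{Z}[\zeta_q]\xrightarrow{\ \sim\ }\mathfrak{a}$ as $\bm{Z}[\zeta_q]$-modules, which trivializes the flabby class and, via the exact sequence $0\to\bm{Z}\to\bm{Z}[C_q]\to\bm{Z}[\zeta_q]\to0$, presents $M$ as quasi-permutation. I would then feed the explicit generator $\alpha$ and the relation $bm'=\sum_i a_i t^i$ into a monomial change of variables on the $y_{i,j}$ to write down an invariant transcendence basis.

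The step I expect to be the main obstacle is the last one — upgrading ``$\mathfrak{a}$ principal'' from stable rationality to genuine rationality. The flabby-class machinery gives stable rationality almost immediately, but exhibiting $k(M)^{C_q}$ as literally $k(\text{new variables})$ requires using the $q$ copies indexed by $j$ as extra independent variables that absorb the complementary quasi-permutation summand, and then checking that the substitution built from $\alpha$ is invertible; this explicit bookkeeping is where the real work lies. Finally, for the ``in particular'' assertion, take $m=p$ prime with $q\mid p-1$: then $d=\gcd\{p,t-1\}=1$ and $m'=p$, so the hypotheses collapse to the bare existence of $\alpha\in\bm{Z}[\zeta_q]$ with $N_{\bm{Q}(\zeta_q)/\bm{Q}}(\alpha)=p$. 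Since $q\mid p-1$ the prime $p$ splits completely in $\bm{Z}[\zeta_q]$, and $\alpha$ generates a prime $\mathfrak{p}$ of norm $p$; as $G_{p,q}$ does not depend on the choice of $t$, I may pick $t$ with $\zeta_q\equiv t\pmod{\mathfrak{p}}$, whereupon $\alpha\in\mathfrak{p}$ yields $\sum_i a_i t^i\equiv0\pmod p$, and the primality of $N(\alpha)=p$ forces $\gcd\{a_0,\dots,a_{q-2},b\}=1$. This recovers all the hypotheses of the main statement and closes the argument.
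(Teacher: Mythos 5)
Your central arithmetic step is exactly the paper's: the relation $bm'=\sum_i a_it^i$ puts $\alpha$ into the ideal $\mathfrak{a}=\langle\zeta_q-t,m'\rangle$, and comparing $N(\alpha)=\pm m'$ with $[\bm{Z}[\zeta_q]:\mathfrak{a}]=m'$ forces $\mathfrak{a}=\langle\alpha\rangle$; this is Step 3 of the paper's proof of Theorem \ref{t4.6}, and your index computation via the surjection $\bm{Z}[\zeta_q]\to\bm{Z}/m'\bm{Z}$ is a clean way to get it. The ``in particular'' reduction (here $d=1$, $m'=p$, and $\langle\alpha\rangle$ is conjugate to $\langle\zeta_q-t,p\rangle$) is also essentially the paper's Theorem \ref{t4.7}.

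The genuine gap is the step you explicitly defer: passing from ``$\mathfrak{a}$ is principal'' to \emph{rationality} (not merely stable rationality) of $k(G_{m,q})$. ``Explicit bookkeeping with a monomial substitution built from $\alpha$'' is not an argument; the mechanism the paper uses is the projectivity of the relevant $\bm{Z}C_q$-lattice, and none of your steps establishes it. Concretely, the paper (i) first splits $G_{m,q}=C_{m_1}\times G_{m_2,q}$ using $m_1\mid t-1$ (Lemma \ref{l4.5}), with $m_2$ odd, after dispatching $q=2$ by Theorem \ref{t1.1}; (ii) observes that the resulting character lattice is a sum of Masuda ideals $\langle\tau-t,p_i^{d_i}\rangle\subset\bm{Z}C_q$ with each $p_i$ odd, which are projective by Endo--Miyata's Proposition 2.6; and (iii) invokes Theorem \ref{t4.1}, where projectivity is precisely the hypothesis that upgrades ``stably rational'' to ``rational''. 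In the paper's route projectivity is even needed to run the flabby-class computation the way you describe (a projective lattice is invertible, hence flabby, so Theorem \ref{t4.2} applies to $[M]$ itself and then $[M]^{fl}=[P]=0$). Your proposal also elides two points hidden in the phrase ``up to a finite gluing'': the even part of $m$ and the prime $q$ itself. When $q\mid m'$ one has $m_2=q^{d_0}m''\neq m'=qm''$, and one must separately check that $\langle\zeta_q-t,q^{d_0}\rangle=\langle\zeta_q-1\rangle$ is principal (the paper's Step 4) before Theorem \ref{t1.6} can be applied to $G_{m_2,q}$. Without the projectivity input and this bookkeeping at the primes dividing $t-1$ and at $q$, your argument stops at stable rationality of $k(M)^{C_q}$, which is strictly weaker than the claim.
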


As an application, Chu and Huang show that $\bm{C}(G_{p,q})$ is rational
when $(p,q)=(5801,29)$, $(6263,31)$, $(32783,37)$, $(101107,41)$;
for more examples, see Section 4 of \cite{CH}.
Note that $\bm{Z}[\zeta_q]$ is not a UFD when $q=29,31,37$ or $41$ by Theorem \ref{t1.2};
thus these examples escape the application of Theorem \ref{t1.1}.

We remark that the proof of Theorem \ref{t1.4} given in \cite{CH} is rather computational and lengthy. Moreover, the assumptions, e.g. the number $m'$, look abrupt at first sight.

The purpose of this paper is to provide a conceptual approach to the rationality of $k(G_{m,n})$ different from the computational verification in \cite{CH}. Moreover, a generalized form of Theorem \ref{t1.4} can be found in Theorem \ref{t4.6} and Theorem \ref{t4.7}. A clarification of the number $m'$ is given in Lemma \ref{l4.5}; we will show that the ``purpose" of the complicated assumptions in Theorem \ref{t1.4} is just to ensure that the ideal $\langle \zeta_q - t, m^{\prime} \rangle$ is a principal ideal of $\bm{Z}[\zeta_q]$ (see Step 3 in the proof of Theorem \ref{t4.6}). The examples constructed by computer computing in \cite[Section 4]{CH} turn out to be heralds of Theorem \ref{t4.8}, which asserts that, for any positive integer $n$, there are infinitely many prime numbers $p$ such that the fixed field $\bm{C}(G_{p,n})$ is rational over $\bm{C}$.

In this article we will study the rationality problem of $k(G_{m,n})$ where $n$ is any positive integer; we don't assume that $n$ is a prime number as in \cite{CH}. Here is a sample of our results.

\begin{theorem} \label{t1.6}
Let $m$ and $n$ be positive integers such that $m$ is an odd integer. Assume that (i) there is an integer $t$ satisfying that $t\in (\bm{Z}/m\bm{Z})^\times$ is of order $n$,
and (ii) for any $e\mid n$, the ideal $\langle \zeta_e-t,m\rangle$ in $\bm{Z}[\zeta_e]$ is a principal ideal.
If $k$ is a field with $\zeta_m, \zeta_n\in k$,
then $k(G_{m,n})$ is rational over $k$.
\end{theorem}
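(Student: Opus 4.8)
The plan is to linearize the problem, reduce it to a multiplicative (monomial) action of the cyclic quotient $C_n=\langle\tau\rangle$ on an algebraic torus, and then cut that torus down to a rank-$n$ piece whose rationality is governed precisely by the principal-ideal hypothesis.

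First I would diagonalize the normal subgroup $H=\langle\sigma\rangle\cong C_m$. Since $\zeta_m\in k$, for each coset index $i\in\bm{Z}/n\bm{Z}$ the variables $x(\sigma^j\tau^i)$ $(0\le j<m)$ can be replaced by the $\sigma$-eigenvectors $y_{i,\ell}=\sum_{j}\zeta_m^{-j\ell}x(\sigma^j\tau^i)$, $\ell\in\bm{Z}/m\bm{Z}$, on which $\sigma\cdot y_{i,\ell}=\zeta_m^{\ell}y_{i,\ell}$ and $\tau\cdot y_{i,\ell}=y_{i+1,t\ell}$. Taking $\langle\sigma\rangle$-invariants replaces the rational function field by the multiplicative invariant field of the lattice of $\sigma$-invariant monomials (the fraction field of the group algebra on that lattice), so that $k(G_{m,n})=k(L_0)^{C_n}$, where $L_0=\ker\big(w\colon\bm{Z}[C_n]^{m}\to\bm{Z}/m\bm{Z}\big)$ and $w$ is the $C_n$-module map sending $\tau\mapsto t$ on each of the $m$ free summands $\bm{Z}[C_n]$ (one per eigenvalue $\ell$).

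Next I would strip off the permutation part of $L_0$. Because $w$ is already surjective on a single summand, a unipotent $C_n$-automorphism of $\bm{Z}[C_n]^m$ can be chosen to concentrate the weight on that summand; this gives a $C_n$-lattice isomorphism $L_0\cong J\oplus\bm{Z}[C_n]^{m-1}$, where $J:=\langle m,\tau-t\rangle\trianglelefteq\bm{Z}[C_n]$ is the ideal with $\bm{Z}[C_n]/J\cong\bm{Z}/m\bm{Z}$, a lattice of rank $n$. By the multiplicative ``no-name'' lemma the free (permutation) summand $\bm{Z}[C_n]^{m-1}$ contributes only independent transcendentals, so $k(L_0)^{C_n}$ is rational over $k(J)^{C_n}$. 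Thus it suffices to prove that $k(J)^{C_n}$ is rational over $k$, which reduces the original rank-$mn$ problem to rank $n$.

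The arithmetic heart, and the step where the hypotheses are consumed, is the rationality of $k(J)^{C_n}$. Here I would exploit the cyclotomic structure of $\bm{Z}[C_n]$: for each $e\mid n$ the reduction $\bm{Z}[C_n]\to\bm{Z}[\zeta_e]$, $\tau\mapsto\zeta_e$, carries $J$ onto exactly the ideal $\langle\zeta_e-t,m\rangle$ appearing in hypothesis (ii). The assumption that every such ideal is principal means that each cyclotomic component of $J$ is a free rank-one $\bm{Z}[\zeta_e]$-module. Working down the divisors of $n$ and peeling off the top quotient $J\twoheadrightarrow\langle\zeta_n-t,m\rangle\cong\bm{Z}[\zeta_n]$ at each stage, I would use these principal generators to identify $J$, up to further permutation summands, with $\bigoplus_{e\mid n}\bm{Z}[\zeta_e]$ (a rank check gives $\sum_{e\mid n}\phi(e)=n$); the rationality of $k(J)^{C_n}$ would then follow from the rationality of the multiplicative invariant field of a cyclotomic lattice when $\zeta_n\in k$ (a Fischer-type input), handled inductively on $n$.

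I expect the main obstacle to be precisely this last step. The difficulty is that $\bm{Z}[C_n]$ is not the maximal order $\prod_{e\mid n}\bm{Z}[\zeta_e]$: the cyclotomic components are only isogenous, not split off, so one cannot simply diagonalize the $\tau$-action on the torus even though all of its eigenvalues (the $n$-th roots of unity) lie in $k$. The content of hypothesis (ii) is that principality holds simultaneously at every level $e\mid n$, which is exactly what is needed to glue the component generators across this non-maximal order and to carry the induction through; the case $e=1$ is automatic (the ideal $\langle 1-t,m\rangle=\langle\gcd(m,t-1)\rangle$ is always principal), which is how the single number $m'=m/\gcd\{m,t-1\}$ of Theorem~\ref{t1.4} reappears when $n$ is prime. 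The hypothesis that $m$ is odd should enter only as a technical normalization in the diagonalization step.
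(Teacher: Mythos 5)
Your first two reductions are sound and agree in substance with the paper's: after diagonalizing $\langle\sigma\rangle$ and taking $\sigma$-invariants, everything does come down to the purely monomial action of $\pi=\langle\tau\rangle\simeq C_n$ on the Masuda ideal $J=\langle\tau-t,m\rangle\subset\bm{Z}\pi$ (the paper works instead with the prime-power pieces $\langle\tau-t,p_i^{d_i}\rangle$, which amounts to the same lattice up to free summands). The gap is in your third step, and it is genuine. The intermediate claim that $J$ can be identified, up to permutation summands, with $\bigoplus_{e\mid n}\bm{Z}[\zeta_e]$ is false: $J$ is a projective $\bm{Z}\pi$-module, hence cohomologically trivial, and permutation lattices satisfy $\hat{H}^{-1}=0$, whereas already for $n=p$ prime the exact sequence $0\to\bm{Z}\to\bm{Z}\pi\to\bm{Z}[\zeta_p]\to 0$ gives $\hat{H}^{-1}(\pi,\bm{Z}[\zeta_p])\simeq\hat{H}^{0}(\pi,\bm{Z})\simeq\bm{Z}/p\neq 0$. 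So no such stable decomposition exists, the proposed induction over divisors of $n$ cannot get off the ground, and the ``gluing across the non-maximal order'' that you correctly identify as the crux is left unproved. The missing ingredients are precisely the two inputs the paper uses: (A) $J$ (equivalently each $\langle\tau-t,p_i^{d_i}\rangle$) is a \emph{projective} $\bm{Z}\pi$-module, by Endo--Miyata's Proposition 2.6 via cohomological triviality of $\bm{Z}\pi/\langle\tau-t,p_i^{d_i}\rangle$; and (B) the Endo--Miyata/Lenstra criterion (Theorem \ref{t4.1}): for a projective $M$, the invariant field is rational if and only if every $M/\Phi_e(\tau)M$ is a free $\bm{Z}[\zeta_e]$-module, which is exactly hypothesis (ii). Projectivity, not a cyclotomic splitting, is what replaces your gluing step.

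This also shows your closing remark is exactly backwards: the hypothesis that $m$ is odd is not a ``technical normalization in the diagonalization step'' (diagonalization needs only $\zeta_m\in k$); it is consumed in establishing projectivity of $\langle\tau-t,p^d\rangle$, which can fail for $p=2$ (compare Lemma \ref{l3.1}(3) and the extra condition $-1\notin\langle t_i\rangle$ required in the $2$-group part of Example \ref{ex4.9}). Two smaller points: to invoke Theorems \ref{t2.5} and \ref{t4.1} you need a genuine Galois setup $\pi=\fn{Gal}(K/k_0)$, which the paper arranges by reserving one free summand $\bm{Z}\pi$ from the permutation part and setting $K=k(\bm{Z}\pi)$, $k_0=K^{\pi}$ (rational over $k$ by Fischer's theorem) --- your $L_0\simeq J\oplus\bm{Z}\pi^{m-1}$ makes this available but you should say so; and your description of the weight map $w$ is only correct on the summand $\ell=1$ (on the summand indexed by $\ell$ it is $\ell$ times evaluation at $t$), though the resulting isomorphism $L_0\simeq J\oplus\bm{Z}\pi^{m-1}$ still holds by the unipotent change of basis you describe.
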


For other results, see Theorem \ref{t4.3} and Theorem \ref{t4.7}.

The main idea in the proofs of Theorem \ref{t1.6} and its variants is to apply the methods developed by Endo, Miyata, Lenstra etc. in solving Noether's problem for abelian groups \cite{EM1,EM2,Le}. These methods were reformulated by Colliot-Th\'el\`ene and Sansuc \cite[Section 1]{CTS} (see Section 2 for a brief summary). Armed with these tools, we will embark on the investigation of the rationality problem of $k(G_{m,n})$ in Section 4.

In Section 4, the reader will find that the rationality of $\bm{C}(G_{m,n})$ is reduced to the rationality of $\bm{C}(M)^\pi$
where $\pi \simeq C_n$ and $M$ is a $\pi$-lattice
(see Section 2 for the definition of a $\pi$-lattice and the multiplicative invariant field $\bm{C}(M)^\pi$).
Such a rationality problem was studied by Saltman \cite{Sa2}, Beneish and Ramsey \cite{BR}. One of the aims of Salman in \cite{Sa2} is to find a group $\pi$ and a $\pi$-lattice $M$ such that $\bm{C}(M)^\pi$ is not retract rational (and thus not stably rational); it is necessary that such a group $\pi$ is not cyclic by \cite{Ka3}. On the other hand, Beneish and Ramsey considered a cyclic group $\pi$ and proposed a notion, the Property $*$ for $\pi$ \cite[Definition 3.3]{BR}. Assuming the Property $*$, they were able to prove two significant results.

\begin{theorem}[Beneish and Ramsey {\cite[Theorem 3.12 and Theorem 3.13]{BR}}] \label{t1.7}
Assume that the Property $*$ for a cyclic group of order $n$ is valid.

(i) Let $\pi \simeq C_n$ and $M$ be any $\pi$-lattice. If $k$ is a field with $\zeta_n \in k$, then $k(M)^{\pi}$ is stably rational over $k$.

(ii) Let $G:=A \rtimes C_n$ where $A$ is a finite abelian group of exponent $e$. If $k$ is a field with $\zeta_e, \zeta_n \in k$, then $k(G)$ is stably rational over $k$.
\end{theorem}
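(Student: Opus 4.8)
The plan is to deduce both statements from the flabby-class dictionary of Colliot-Th\'el\`ene and Sansuc \cite{CTS} (summarised in Section 2), the arithmetic content being supplied entirely by the Property $*$. Recall that to a $\pi$-lattice $M$ one attaches, via a flabby resolution $0\to M\to P\to F\to 0$ with $P$ permutation and $F$ flabby, a well-defined flabby class $[M]^{\fn{fl}}:=[F]$; and that, since $\zeta_n\in k$ makes the relevant torus split over $k$, the multiplicative invariant field $k(M)^{\pi}$ is stably $k$-rational exactly when $[M]^{\fn{fl}}=0$, i.e.\ when $F$ is stably permutation (it is merely retract rational when $F$ is only invertible). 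Both parts therefore reduce to the vanishing of a flabby class, and the Property $*$ is precisely what forces this vanishing.

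For part (i), I would use that $[\,\cdot\,]^{\fn{fl}}$ is additive on short exact sequences and vanishes on stably permutation lattices, so that it is enough to control $[M]^{\fn{fl}}$ on a convenient generating family of $C_n$-lattices, built from the cyclotomic modules $\bm{Z}[\zeta_d]$ and their fractional ideals for the divisors $d\mid n$ together with the induced permutation lattices. For each building block I would exhibit an explicit flabby resolution and identify its flabby term; the Property $*$, being a statement about the arithmetic of the rings $\bm{Z}[\zeta_d]$ (principality of the relevant ideals, equivalently triviality of the pertinent ideal classes), is exactly what upgrades each such flabby term from merely invertible to stably permutation. Patching the blocks back together yields $[M]^{\fn{fl}}=0$ for an arbitrary $C_n$-lattice $M$, and the criterion above gives the stable rationality of $k(M)^{\pi}$.

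For part (ii), I would first linearise the abelian part. Writing $G=A\rtimes C_n$ with $A$ of exponent $e$ and using $\zeta_e\in k$, Fischer's theorem (Theorem \ref{t1.5}) diagonalises the $A$-action on $k(V_{\mathrm{reg}})$, so that $k(V_{\mathrm{reg}})^{A}$ is generated by monomials in $A$-eigenvectors indexed by the character group $\widehat{A}$. The residual $C_n$-action on $k(V_{\mathrm{reg}})^{A}$ is then monomial, governed by the conjugation action of $C_n$ on $\widehat{A}$; concretely, up to a $C_n$-stable permutation (hence rational) extension, $k(V_{\mathrm{reg}})^{A}\cong k(M)$ for a $C_n$-lattice $M$ assembled from $\widehat{A}$. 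Taking $C_n$-invariants and stripping the rational factor by the no-name lemma, $k(G)=(k(V_{\mathrm{reg}})^{A})^{C_n}$ is stably rational as soon as $k(M)^{C_n}$ is, which is part (i).

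The hard part is (i): the indecomposable $C_n$-lattices are not classified for general $n$, so the reduction to a tractable generating family and the construction of flabby resolutions whose flabby terms are pinned down by the cyclotomic class data is delicate, and it is here that the full force of the Property $*$ --- as opposed to the weaker hypothesis yielding only retract rationality via invertibility --- is spent; this is in the spirit of the computations of Endo--Miyata and Lenstra \cite{EM1,EM2,Le} for abelian groups. Part (ii) is then essentially formal, the two points deserving care being that the linearisation genuinely produces a multiplicative (lattice) $C_n$-action and that the auxiliary variables split off as a $C_n$-permutation extension, so that stable rationality descends cleanly through the no-name lemma.
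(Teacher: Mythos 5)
There is a genuine gap in your part (i): the central claim --- that Property $*$ forces $[M]^{fl}=0$ for \emph{every} $C_n$-lattice $M$ --- is left as a program (``reduce to a generating family of cyclotomic building blocks, write explicit flabby resolutions, patch together'') which you yourself flag as delicate and do not carry out. The two tools you propose to rely on are not available in the form you use them: the flabby class is additive on direct sums but not on arbitrary short exact sequences of lattices, and there is no classification of $C_n$-lattices for general $n$ that would furnish the ``convenient generating family.'' The paper avoids this entirely: by Theorem \ref{t5.2}, Property $*$ is equivalent to $F_\pi=0$, i.e.\ the \emph{whole} flabby class group is trivial (this rests on the Endo--Miyata--Swan isomorphism $F_\pi\simeq\bigoplus_{e\mid n}C(\bm{Z}[\zeta_e])$ of Theorem \ref{t4.2} and the equivalence with $\bm{Z}[\zeta_n]$ being a UFD), so $[M]^{fl}=0$ holds for every $M$ with no lattice-by-lattice analysis. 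A second, smaller gap: your appeal to the torus dictionary (``$\zeta_n\in k$ makes the relevant torus split over $k$'') misapplies Theorem \ref{t2.5}, which concerns $L(M)^\pi$ for a \emph{faithful} Galois action of $\pi$ on $L$; in part (i) the action of $\pi$ on $k$ is trivial, and passing from $[M]^{fl}=0$ to stable rationality of $k(M)^\pi$ requires the argument of Lemma \ref{l4.12} (replace $\pi$ by its faithful quotient $\pi''$, adjoin $M'=\bm{Z}\pi''$, apply the no-name lemma and Fischer's theorem to $k(M')^{\pi''}$).

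Your part (ii) is sound in outline (diagonalise $A$ via $\zeta_e\in k$, read off a monomial $C_n$-action, reduce to part (i) by the no-name lemma), and is indeed the mechanism used elsewhere in the paper (Theorem \ref{t4.3}); but note that the paper's actual proof of (ii) is shorter and stronger: since Property $*$ is equivalent to $\bm{Z}[\zeta_n]$ being a UFD, Theorem \ref{t1.1} applies directly and yields rationality of $k(G)$, not merely stable rationality.
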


In Theorem \ref{t5.2} we will prove that the Property $*$ for a cyclic group of order $n$ is equivalent to the assertion that $\bm{Z}[\zeta_n]$ is a UFD. As a result an alternative proof of Theorem \ref{t1.7} will be given in Section 5. In fact, similar results are valid, say, for the dihedral group, because such kind of theorems are consequences of Endo-Miyata's Theorem \cite[Theorem 3.3; EK, Theorem 1.4]{EM2}; see Theorem \ref{t5.3} and Lemma \ref{l5.4}.

Finally we remark that, besides the sufficient condition for the rationality of $\bm{C}(G_{m,n})$,
the retract rationality, one of the necessary conditions, is already known before.
For a field extension $L$ of $k$ (where $k$ is an infinite field),
the notion that $L$ is retract rational over $k$ is introduced by Saltman \cite{Sa1}.
It is known that ``rational" $\Rightarrow$ ``stably rational" $\Rightarrow$ ``retract rational" $\Rightarrow$ ``unirational".
The reader may consult Theorem 1.6, Theorem 1.7 and Theorem 1.8 of \cite{Ka3}
for a quick review of the retract rationality of $\bm{C}(A\rtimes C_n)$ where $A$ is a finite abelian group.

\begin{idef}{Standing notations.}
Throughout this article, we consider only finite groups. The following notations are adopted:

$C_n$: the cyclic group of order $n$, \par
$\pi$: a finite group, \par
$\bm{Z}\pi$: the integral group ring of $\pi$, \par
$\Phi_n(X)$: the $n$-th cyclotomic polynomial, \par
$\phi(n)$: the value of the Euler $\phi$-function at $n$, \par
$(\bm{Z}/m\bm{Z})^\times$: the group of units of the ring $\bm{Z}/m\bm{Z}$, \par
$\fn{ord}_p(n)$: the exponent of $p$ in $n$, i.e.\ if $\fn{ord}_p(n)=e$, then $p^e\mid n$ but $p^{e+1}\nmid n$, \par
$\zeta_n$: a primitive $n$-th root of unity.
\end{idef}

When we say that $\zeta_n\in k$ ($k$ is a field), it is understood that either $\fn{char}k=0$ or $\fn{char}k>0$ with $\fn{char}k\nmid n$.
Recall that a field extension $L/k$ is rational if $L$ is purely transcendental over $k$,
i.e.\ $L\simeq k(x_1,\ldots,x_n)$ over $k$ where $k(x_1,\ldots,x_n)$ is the rational function field of $n$ variables over $k$.
A field extension $L/k$ is stably rational if $L(y_1,\ldots,y_m)$ is rational over $k$
where $y_1,\ldots,y_m$ are some elements algebraically independent over $L$. If $\pi$ is a group and $\tau \in \pi$, then $\langle \tau \rangle$ denotes the subgroup generated by $\tau$; similarly, if $R$ is a commutative ring and $\alpha_1, \alpha_2, \cdots, \alpha_n \in R$, then $\langle \alpha_1, \alpha_2, \cdots, \alpha_n \rangle$ denotes the ideal generated by $\alpha_1, \alpha_2, \cdots, \alpha_n$.

Let $m$ and $n$ be positive integers.
For the sake of simplicity, we will simply say that $t\in (\bm{Z}/m\bm{Z})^\times$ is of order $n$,
when we mean that $t\in\bm{Z}$, $\gcd\{t,m\}=1$ and the subgroup $\langle \bar{t}\rangle \simeq C_n$
where $\bar{t}$ is the residue class of $t$ in $(\bm{Z}/m\bm{Z})^\times$.

Finally we remind the reader that the fixed field $k(G)$ is defined at the beginning of this section.

\bigskip
Acknowledgments. The proof of Theorem \ref{t4.8} was suggested by Prof. Ching-Li Chai (Univ. of Pennsylvania), whose help is highly appreciated.

\section{Preliminaries}

Let $\pi$ be a finite group.
We recall the definition of $\pi$-lattices.

\begin{defn} \label{d2.1}
Let $\pi$ be a finite group.
A finitely generated $\bm{Z}[\pi]$-module $M$ is called a $\pi$-lattice if $M$ is a free abelian group when it is regarded as an abelian group.

If $M$ is a $\pi$-lattice and $L$ is a field with $\pi$-action,
we will associate a rational function field over $L$ with $\pi$-action as follows.
Suppose that $M=\bigoplus_{1\le i\le m}\bm{Z}\cdot u_i$.
Define $L(M)=L(x_1,\ldots,x_m)$, a rational function field of $m$ variables over $L$.
For any $\sigma\in \pi$, if $\sigma\cdot u_i=\sum_{1\le j\le m} a_{ij}u_j$ in $M$ (where $a_{ij}\in \bm{Z}$),
we define $\sigma\cdot x_i=\prod_{1\le j\le m} x_j^{a_{ij}}$ in $L(M)$ and,
for any $\alpha\in L$, define $\sigma\cdot\alpha$ by the prescribed $\pi$-action on $L$.

Note that, if $\pi$ acts faithfully on $L$ and $K=L^\pi$ (i.e.\ $\pi\simeq \fn{Gal}(L/K)$),
then the fixed field $L(M)^\pi=\{f\in L(M):\sigma\cdot f=f$ for all $\sigma\in\pi\}$ is the function field of an algebraic torus defined over $K$,
split by $L$ and with character lattice $M$ (see \cite[Section 12; Sa2]{Vo,Sw2}).

On the other hand, if $\pi$ acts trivially on $L$ (i.e.\ $\sigma(\alpha)=\alpha$ for all $\sigma\in \pi$, for all $\alpha\in L$),
the action of $\pi$ on $L(M)$ is called a purely monomial action in some literature. When we write $k(M)^{\pi}$ without emphasizing the action of $\pi$ on $k$, it is understood that $\pi$ acts trivially on $k$, i.e. the situation of purely monomial actions.
\end{defn}

\begin{defn} \label{d2.2}
Let $\pi$ be a finite group and $M$ be a $\pi$-lattice.
$M$ is called a permutation lattice if $M$ has a $\bm{Z}$-basis permuted by $\pi$.
A $\pi$-lattice $M$ is called an invertible lattice if it is a direct summand of some permutation lattice.
A $\pi$-lattice $M$ is called a flabby lattice if $H^{-1}(\pi',M)=0$ for all subgroup $\pi'$ of $\pi$;
it is called a coflabby lattice if $H^1(\pi',M)=0$ for all subgroups $\pi'$ of $\pi$.
For the basic properties of $\pi$-lattices, see \cite{CTS,Sw2}.
\end{defn}

\begin{defn} \label{d2.3}
Let $\pi$ be a finite group.
Denote by $\c{L}_{\pi}$ (resp.\ $\c{F}_{\pi}$) the class of all the $\pi$-lattices (resp.\ all the flabby $\pi$-lattices).
We introduce a similarity relation on $\c{L}_{\pi}$ and $\c{F}_{\pi}$:
two lattices $M_1$ and $M_2$ are similar,
denoted by $M_1\sim M_2$, if $M_1\oplus Q_1\simeq M_2\oplus Q_2$ for some permutation $\pi$-lattices $Q_1$ and $Q_2$.
Let $\c{L}_{\pi}/{\sim}$ and $\c{F}_{\pi}/{\sim}$ be the sets of similarity classes of $\c{L}_{\pi}$ and $\c{F}_{\pi}$ respectively;
we define $F_{\pi}=\c{F}_{\pi}/{\sim}$.
For each $\pi$-lattice $M$,
denote by $[M]$ the similarity class containing $M$.

We define an addition on $\c{L}_{\pi}/{\sim}$ and $F_{\pi}$ as follows: $[M_1]+[M_2]:=[M_1\oplus M_2]$ for any $\pi$-lattices $M_1$ and $M_2$.
In this way, $\c{L}_{\pi}/{\sim}$ becomes an abelian monoid and $F_{\pi}$ is a submonoid of $\c{L}_{\pi}/{\sim}$.
Note that $[M]=0$ in $F_{\pi}$ if and only if $M$ is stably permutation,
i.e.\ $M\oplus Q$ is isomorphic to a permutation $\pi$-lattice where $Q$ is some permutation $\pi$-lattice.
See \cite{Sw2} for details.
\end{defn}

\begin{defn} \label{d2.4}
Let $\pi$ be a finite group, $M$ be a $\pi$-lattice.
The $M$ have a flabby resolution,
i.e.\ there is an exact sequence of $\pi$-lattices:
$0\to M\to Q\to E\to 0$ where $Q$ is a permutation lattice and $E$ is a flabby lattice \cite[Lemma 1.1; CTS; Sw2]{EM2}.

Although the above flabby resolution is not unique, the class $[E]\in F_{\pi}$ is uniquely determined by $M$.
Thus we define the flabby class of $M$, denoted as $[M]^{fl}$,
by $[M]^{fl}=[E]\in F_{\pi}$ (see \cite{Sw2}).
Sometimes we say that $[M]^{fl}$ is permutation or invertible if the class $[E]$ contains a permutation lattice or an invertible lattice.
\end{defn}

\begin{theorem} \label{t2.5}
Let $L/K$ be a finite Galois extension with $\pi=\fn{Gal}(L/K)$,
and let $M$ be a $\pi$-lattice.
The group $\pi$ acts on the field $L(M)$ as in Definition \ref{d2.1}.
\begin{enumerate}
\item[$(1)$]
{\rm (\cite[Theorem 1.6; Vo; Le, Theorem 1.7; CTS]{EM1})}
The fixed field $L(M)^\pi$ is stably rational over $K$ if and only if $[M]^{fl}=0$ in $F_{\pi}$.

\item[$(2)$]
{\rm (\cite[Theorem 3.14]{Sa1})}
Assume that $K$ is an infinite field.
Then the fixed field $L(M)^\pi$ is retract rational over $K$ if and only if $[M]^{fl}$ is invertible.
\end{enumerate}
\end{theorem}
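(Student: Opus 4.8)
The plan is to prove the two equivalences in parallel, since part $(2)$ follows the template of part $(1)$ with the triple ``permutation lattice / stably permutation / $[M]^{fl}=0$'' replaced throughout by ``invertible lattice / direct summand of permutation / $[M]^{fl}$ invertible.'' The whole argument rests on two pillars, a base case and a fibration lemma. First I would record the base case: if $P$ is a permutation $\pi$-lattice, say $P=\bigoplus_i \bm{Z}[\pi/\pi_i]$, then $L(P)$ is a rational function field on which $\pi$ permutes (cosets of) the variables while acting on $L$ through the Galois action, and Speiser's lemma (the multiplicative form of Hilbert's Theorem 90, i.e.\ the ``no-name lemma'') shows $L(P)^\pi$ is rational over $K$. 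Passing to direct summands, an invertible lattice then yields a field that is a retract of a rational one, which is the base case for $(2)$.

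Next comes the fibration lemma, the true workhorse. Given an exact sequence of $\pi$-lattices $0\to M_1\to M_2\to M_3\to 0$ with $M_3$ a permutation lattice, I claim $L(M_2)^\pi$ is rational over $L(M_1)^\pi$. The sequence splits over $\bm{Z}$, so the variables attached to a permutation basis of $M_3$ furnish transcendentals over $L(M_1)$; the content is that these can be normalized to be $\pi$-invariant, which is again a Hilbert 90 computation using $H^1(\pi',M_3)=0$ for permutation $M_3$ and all $\pi'\le \pi$ (Shapiro's lemma). For $(2)$ one needs only the weaker conclusion that $L(M_2)^\pi$ is a retract, resp.\ stably rational, extension of $L(M_1)^\pi$ when $M_3$ is merely invertible.

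With these in hand the ($\Leftarrow$) directions are bookkeeping. Choose a flabby resolution $0\to M\to Q\to E\to 0$ with $Q$ permutation and $[M]^{fl}=[E]$. If $[E]=0$, then $E$ is stably permutation, so after absorbing a permutation complement into $Q$ (replace $Q,E$ by $Q\oplus P_1,\,E\oplus P_1\simeq P_2$) we may assume $E$ and $Q$ are both permutation. The fibration lemma makes $L(Q)^\pi$ rational over $L(M)^\pi$, while the base case makes $L(Q)^\pi$ rational over $K$; comparing the two presentations $L(M)^\pi(t_1,\dots,t_r)\simeq L(Q)^\pi\simeq K(s_1,\dots,s_N)$ shows $L(M)^\pi$ is stably rational over $K$. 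The identical argument with ``invertible'' in place of ``permutation'' gives retract rationality when $[E]$ is invertible, which is ($\Leftarrow$) of $(2)$.

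The hard part will be the converse ($\Rightarrow$): the birational invariance of the flabby class. Granting the well-definedness of $[M]^{fl}$ already recorded in Definition \ref{d2.4}, what remains is to show that $[M]^{fl}\in F_\pi$ is a stable-birational invariant of $L(M)^\pi$, and that its invertibility is a retract-birational invariant; that is, if $L(M_1)^\pi$ and $L(M_2)^\pi$ are stably (resp.\ retract) isomorphic over $K$, then $[M_1]^{fl}=[M_2]^{fl}$ (resp.\ one is invertible iff the other is). Because a rational field is $L(P)^\pi$ for a permutation $P$, for which $[P]^{fl}=0$, this invariance forces $[M]^{fl}=0$ from stable rationality and forces $[M]^{fl}$ invertible from retract rationality. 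Establishing this invariance is where essentially all the difficulty lies: for $(1)$ it is the Endo--Miyata/Voskresenskii/Colliot-Th\'el\`ene--Sansuc core, and for $(2)$ it is Saltman's specialization argument, which is precisely where the hypothesis that $K$ is infinite enters.
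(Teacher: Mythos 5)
A point of order first: the paper offers no proof of Theorem \ref{t2.5} at all --- both parts are quoted from the literature (part (1) from \cite{EM1}, \cite{Vo}, \cite{Le}, \cite{CTS}; part (2) from \cite[Theorem 3.14]{Sa1}) --- so there is no in-paper argument to compare yours against. Judged on its own terms, your sketch reproduces the correct architecture of the standard proofs: the base case for permutation lattices via the no-name lemma (Theorem \ref{t2.6}), a fibration lemma for $0\to M_1\to M_2\to M_3\to 0$ with $M_3$ permutation, and the reduction of both ($\Leftarrow$) directions to bookkeeping with a flabby resolution. Two technical corrections to that half. The cohomology whose vanishing drives the fibration lemma is not $H^1(\pi',M_3)$ (which does vanish for permutation $M_3$, but is irrelevant here); what one must split is the extension of $\pi$-modules $1\to L(M_1)^\times\to \Gamma\to M_3\to 0$, where $\Gamma$ is the multiplicative group generated by $L(M_1)^\times$ and the $M_2$-variables, and the obstruction lives in $\fn{Ext}^1_{\pi}\bigl(M_3,L(M_1)^\times\bigr)\simeq H^1\bigl(\pi,\fn{Hom}(M_3,L(M_1)^\times)\bigr)$, killed by Shapiro's lemma plus Hilbert 90 for the stabilizers $\pi_i$ when $M_3=\bigoplus_i\bm{Z}[\pi/\pi_i]$. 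Second, your ``weaker conclusion when $M_3$ is merely invertible'' is not a formal consequence of the permutation case; Saltman's route to ($\Leftarrow$) of (2) goes through his lattice-theoretic characterization of retract rationality, and the hypothesis that $K$ is infinite already enters there.

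The genuine gap is the converse direction. You correctly isolate it --- $[M]^{fl}$ is a stable-birational invariant of $L(M)^\pi$, and its invertibility a retract-birational one --- but you give no argument for it, and that invariance \emph{is} essentially the entire content of the theorems being proved; reducing Theorem \ref{t2.5} to it and then citing Endo--Miyata and Saltman for it is circular as a proof. To close part (1) one needs the actual mechanism: given $L(M_1)^\pi(x_1,\dots,x_r)\simeq L(M_2)^\pi(y_1,\dots,y_s)$ over $K$, one compares flabby resolutions of $M_1\oplus(\text{permutation})$ and $M_2\oplus(\text{permutation})$ through the common field and invokes the uniqueness statement behind Definition \ref{d2.4} (the two-resolution comparison of \cite{CTS}, or Lenstra's direct argument in \cite{Le}) to conclude $[M_1]^{fl}=[M_2]^{fl}$ in $F_\pi$; applying this with $M_2$ permutation forces $[M_1]^{fl}=0$. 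For part (2) the forward direction is Saltman's specialization argument. As written, the proposal is a faithful road map whose hardest leg is named but not traversed.
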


Finally we recall a variant of the No-Name Lemma.

\begin{theorem}[{\cite[Theorem 1]{HK}}] \label{t2.6}
Let $L$ be a field and $G$ be a finite group acting on $L(x_1,\ldots,x_m)$,
the rational function field of $m$ variables over $L$.
Assume that
\begin{enumerate}
\item[(i)]
for any $\sigma\in G$, $\sigma(L)\subset L$,
\item[(ii)]
the restriction of the action of $G$ to $L$ is faithful, and
\item[(iii)]
for any $\sigma\in G$,
\[
\begin{pmatrix} \sigma(x_1) \\ \vdots \\ \sigma(x_m) \end{pmatrix}
=A(\sigma)\begin{pmatrix} x_1 \\ \vdots \\ x_m \end{pmatrix}+B(\sigma)
\]
where $A(\sigma)\in GL_m(L)$ and $B(\sigma)$ is an $m\times 1$ matrix over $L$.
\end{enumerate}
Then $L(x_1,\ldots,x_m)=L(z_1,\ldots,z_m)$ for some elements $z_1,\ldots,z_m \in L(x_1,\ldots,x_m)$
such that $\sigma(z_i)=z_i$ for all $\sigma\in G$, for all $1\le i\le m$.
Consequently, $L(x_1,\ldots,x_m)^G=L^G(z_1,\ldots,z_m)$.
\end{theorem}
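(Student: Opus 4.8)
The plan is to trivialize the given affine semilinear $G$-action by two successive linear changes of the variables, each supplied by a form of Hilbert's Theorem 90. Write $\mathbf{x}$ for the column vector $(x_1,\ldots,x_m)^{\mathrm t}$, and for a matrix or vector with entries in $L$ let $\sigma(\,\cdot\,)$ mean the entrywise action of $\sigma$. In this notation hypothesis (iii) becomes $\sigma(\mathbf{x})=A(\sigma)\mathbf{x}+B(\sigma)$ with $A(\sigma)\in GL_m(L)$ and $B(\sigma)\in L^m$. Because $G$ is finite and, by (i) and (ii), faithful on $L$, Artin's theorem makes $L/K$ a Galois extension with group $G$, where $K:=L^G$; this is the setting in which the vanishing theorems below apply.

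First I would kill the linear part $A$. Expanding $(\sigma\tau)(\mathbf{x})=\sigma(\tau(\mathbf{x}))$ and matching the coefficients of the $x_j$ yields the twisted identity
\[
A(\sigma\tau)=\sigma\!\big(A(\tau)\big)\,A(\sigma),
\]
so that $\sigma\mapsto A(\sigma)^{-1}$ is a $1$-cocycle of $G$ valued in $GL_m(L)$. By Speiser's generalization of Hilbert 90, $H^1(G,GL_m(L))=1$, this cocycle is a coboundary, which after unwinding produces a matrix $P\in GL_m(L)$ with $A(\sigma)=\sigma(P^{-1})\,P$. Setting $\mathbf{w}:=P\mathbf{x}$ and using $\sigma(P)A(\sigma)=P$ I obtain
\[
\sigma(\mathbf{w})=\sigma(P)\big(A(\sigma)\mathbf{x}+B(\sigma)\big)=\mathbf{w}+\sigma(P)\,B(\sigma),
\]
so that in the variables $w_1,\ldots,w_m$ only a translation $\widetilde B(\sigma):=\sigma(P)B(\sigma)\in L^m$ survives.

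Next I would remove that translation. From $\sigma(\mathbf{w})=\mathbf{w}+\widetilde B(\sigma)$ and associativity one reads off the additive cocycle relation
\[
\widetilde B(\sigma\tau)=\widetilde B(\sigma)+\sigma\!\big(\widetilde B(\tau)\big),
\]
exhibiting $\widetilde B$ as a $1$-cocycle valued in the additive group $L^m$. The additive Hilbert 90 gives $H^1(G,L)=0$ (equivalently $L\simeq K[G]$ as $K[G]$-modules, by the normal basis theorem), hence $H^1(G,L^m)=0$, so $\widetilde B(\sigma)=\sigma(\mathbf{d})-\mathbf{d}$ for some $\mathbf{d}\in L^m$. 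Putting $\mathbf{z}:=\mathbf{w}-\mathbf{d}=P\mathbf{x}-\mathbf{d}$ then gives $\sigma(\mathbf{z})=\mathbf{z}$ for all $\sigma\in G$, i.e.\ the fixed elements $z_1,\ldots,z_m$. Since $\mathbf{z}=P\mathbf{x}-\mathbf{d}$ is an invertible affine substitution over $L$, each $x_i$ lies in $L(z_1,\ldots,z_m)$, whence $L(x_1,\ldots,x_m)=L(z_1,\ldots,z_m)$. Finally $L(z_1,\ldots,z_m)/K(z_1,\ldots,z_m)$ is Galois with group $G$ acting only through $L$, so taking $G$-invariants yields $L(x_1,\ldots,x_m)^G=L^G(z_1,\ldots,z_m)$, as required.

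The only real input is the vanishing of the two cohomology sets, and I expect the genuinely nontrivial one to be the nonabelian statement $H^1(G,GL_m(L))=1$; the additive vanishing is routine. The one spot demanding care is matching conventions, namely translating the twisted identity $A(\sigma\tau)=\sigma(A(\tau))A(\sigma)$ into the standard coboundary form $A(\sigma)=\sigma(P^{-1})P$ — passing to $A(\sigma)^{-1}$ handles this. Alternatively the two steps can be fused by homogenizing: the matrices $\left(\begin{smallmatrix}A(\sigma)&B(\sigma)\\0&1\end{smallmatrix}\right)$ form a cocycle valued in the affine group $GL_m\ltimes L^m$, whose $H^1$ vanishes because those of $GL_m(L)$ and $L^m$ do, producing the fixed coordinates in one stroke.
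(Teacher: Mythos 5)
Your proof is correct: the twisted identities $A(\sigma\tau)=\sigma(A(\tau))A(\sigma)$ and $\widetilde B(\sigma\tau)=\widetilde B(\sigma)+\sigma(\widetilde B(\tau))$ are derived with the right conventions, Speiser's theorem $H^1(G,GL_m(L))=1$ and additive Hilbert 90 are legitimately available because hypotheses (i) and (ii) plus Artin's theorem make $L/L^G$ finite Galois with group $G$, and the concluding step is sound since the $z_i$ remain algebraically independent over $L$ while $G$ acts on $L(z_1,\ldots,z_m)$ only through $L$. The paper gives no proof of this statement --- it quotes it as Theorem 1 of \cite{HK} --- and your two-step Galois-descent argument (or its homogenized one-step variant) is precisely the standard proof of that cited result, so the approaches coincide.
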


\section{Some ideals of $\bm{Z}[\zeta_n]$}

Let $\bm{Z}[\zeta_n]$ be the ring of integers of the cyclotomic field $\bm{Q}(\zeta_n)$.

\begin{lemma} \label{l3.1}
{\rm (1) (\cite[Lemma 1; Le, Lemma 3.8]{Ar})}
Let $p\ge 3$ be a prime number.
For simplicity, we write $\fn{ord}(m)$ for $\fn{ord}_p(m)$.
Suppose that $t\in (\bm{Z}/p\bm{Z})^\times$ is of order $n$, then
\begin{quote}
$\fn{ord}(\Phi_n(t))=\fn{ord}(t^n-1)\ge 1$; \par
$\fn{ord}(\Phi_{p^d n} (t))=1$ if $d\ge 1$; \par
$\fn{ord}(\Phi_e(t))=0$ if $e\in\bm{N}$ and $e$ is not of the form $p^d n$ (where $d\ge 0$); \par
$\fn{ord}(t^l-1)=0$ if $l\in\bm{N}$ and $n\nmid l$; \par
$\fn{ord}(t^l-1)=\fn{ord}(t^n-1)+\fn{ord}(l)$ if $l\in\bm{N}$ and $n\mid l$.
\end{quote}

{\rm (2) (\cite[page 15]{EM1})}
Let $p\ge 3$ and $\fn{ord}(m)$ be the same as in $(1)$.

Let $d\ge 1$, $n\mid \phi(p^d)$ and $n=p^{d_0}n_0$ where $0\le d_0\le d-1$ and $p\nmid n_0$.
Let $t\in (\bm{Z}/p^d\bm{Z})^\times$ be of order $n$.
Then
\begin{quote}
$\fn{ord}(\Phi_{n_0}(t))\ge d$ if $d_0=0$, i.e.\ $p\nmid n$; \par
$\fn{ord}(\Phi_{n_0}(t))=d-d_0$ if $d_0\ge 1$; \par
$\fn{ord}(\Phi_{p^{d'} n_0}(t))=1$ if $d_0\ge 1$ and $1\le d'\le d_0$, \par
$\fn{ord}(\Phi_{p^{d'} n'}(t))=0$ if $0\le d'\le d_0$, $n'\mid n_0$ and $n'<n_0$.
\end{quote}

{\rm (3) (\cite[Lemma 1]{Ar})}
$\fn{ord}_2(\Phi_{2^d}(t))=1$ if $d\ge 2$ and $t$ is an odd integer.
\end{lemma}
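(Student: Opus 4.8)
My plan is to derive all three parts from two standard inputs: the factorization $X^l-1=\prod_{e\mid l}\Phi_e(X)$, which gives the additivity $\fn{ord}_p(t^l-1)=\sum_{e\mid l}\fn{ord}_p(\Phi_e(t))$, and the Lifting-the-Exponent Lemma (in the odd-$p$ form for (1),(2) and the $p=2$ form for (3)). A third, more delicate input will be the classical characterization of the exponents $e$ for which $p\mid\Phi_e(t)$, which I describe below.

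For part (1) I first dispose of the two statements about $t^l-1$. Since $t$ has order $n$ in $(\bm{Z}/p\bm{Z})^\times$, we have $p\mid t^l-1$ iff $n\mid l$, which immediately gives $\fn{ord}_p(t^l-1)=0$ when $n\nmid l$. When $n\mid l$, writing $l=ns$ and applying the Lifting-the-Exponent Lemma to $(t^n)^s-1^s$ (legitimate since $p\mid t^n-1$, $p\nmid t$, and $p$ is odd) yields $\fn{ord}_p(t^l-1)=\fn{ord}_p(t^n-1)+\fn{ord}_p(s)=\fn{ord}_p(t^n-1)+\fn{ord}_p(l)$, using $p\nmid n$. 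For the cyclotomic statements the key fact is that, for $p\nmid t$ with multiplicative order $d$ modulo $p$, one has $p\mid\Phi_e(t)$ if and only if $e=dp^j$ for some $j\ge0$ (proved via the congruence $\Phi_{mp^j}(X)\equiv\Phi_m(X)^{\phi(p^j)}\pmod p$ for $p\nmid m$). With $d=n$ this is exactly the third bullet; summing the factorization over $e\mid n$ and discarding the vanishing terms gives $\fn{ord}_p(t^n-1)=\fn{ord}_p(\Phi_n(t))\ge1$, the first bullet. Finally I compare the two evaluations of $\fn{ord}_p(t^{p^dn}-1)$: the Lifting-the-Exponent Lemma gives $\fn{ord}_p(t^n-1)+d$, while the factorization gives $\fn{ord}_p(\Phi_n(t))+\sum_{j=1}^d\fn{ord}_p(\Phi_{np^j}(t))$, so an induction on $d$ forces each $\fn{ord}_p(\Phi_{np^j}(t))=1$.

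For part (2) the decisive observation is that, since $(\bm{Z}/p^d\bm{Z})^\times\simeq\bm{Z}/(p-1)\bm{Z}\times\bm{Z}/p^{d-1}\bm{Z}$ for odd $p$, reduction modulo $p$ carries an element of order $n=p^{d_0}n_0$ to one of order $n_0$; hence part (1) applies verbatim with $n$ replaced by $n_0$. In particular $\fn{ord}_p(\Phi_e(t))=0$ unless $e=n_0p^j$, which is the fourth bullet, and $\fn{ord}_p(\Phi_{n_0}(t))=\fn{ord}_p(t^{n_0}-1)$. The new content is pinning down this last valuation from the fact that $t$ has order exactly $n$ modulo $p^d$: the equivalence $p^d\mid t^l-1\iff n\mid l$ gives $\fn{ord}_p(t^n-1)\ge d$ and, when $d_0\ge1$, $\fn{ord}_p(t^{n/p}-1)\le d-1$; combined with the two Lifting-the-Exponent evaluations $\fn{ord}_p(t^n-1)=\fn{ord}_p(t^{n_0}-1)+d_0$ and $\fn{ord}_p(t^{n/p}-1)=\fn{ord}_p(t^{n_0}-1)+(d_0-1)$, these squeeze $\fn{ord}_p(t^{n_0}-1)=d-d_0$ (second bullet), while for $d_0=0$ only the lower bound survives and gives $\ge d$ (first bullet). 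The third bullet then follows from the same induction as in part (1), now anchored at $\fn{ord}_p(\Phi_{n_0}(t))=d-d_0$. Part (3) is the $p=2$ analogue and is shortest: since $\Phi_{2^d}(X)=X^{2^{d-1}}+1$, I only need $\fn{ord}_2(t^{2^{d-1}}+1)=1$ for odd $t$ and $d\ge2$; as $8\mid t^2-1$ divides $t^{2^{d-1}}-1$, we get $t^{2^{d-1}}\equiv1\pmod4$, hence $t^{2^{d-1}}+1\equiv2\pmod4$.

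The step I expect to require the most care is the cyclotomic input in part (1): the exact characterization of the $e$ with $p\mid\Phi_e(t)$, together with the mod-$p$ congruence for $\Phi_{mp^j}$, since everything downstream — both the vanishing statements and the inductions isolating the value $1$ — rests on knowing precisely which cyclotomic factors can be divisible by $p$. For part (2) the difficulty is instead bookkeeping: keeping the order modulo $p$ (namely $n_0$) rigorously separate from the order modulo $p^d$ (namely $n$), and extracting both an upper and a lower bound on $\fn{ord}_p(t^{n_0}-1)$ from the single hypothesis that $t$ has order exactly $n$.
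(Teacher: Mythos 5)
Your argument is correct. The paper itself gives no proof of Lemma 3.1 — it simply cites Artin, Lenstra, and Endo--Miyata — and your derivation (the factorization $X^l-1=\prod_{e\mid l}\Phi_e(X)$, the lifting-the-exponent computation of $\fn{ord}_p(t^l-1)$, the characterization of the $e$ with $p\mid\Phi_e(t)$ via $\Phi_{mp^j}(X)\equiv\Phi_m(X)^{\phi(p^j)}\pmod p$, and the reduction of part (2) to part (1) by passing from the order $n$ modulo $p^d$ to the order $n_0$ modulo $p$) is exactly the standard argument found in those references, so nothing further is needed.
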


\begin{lemma} \label{l3.2}
Let $m_1$, $m_2$ and $n$ be positive integer such that $\gcd\{m_1,m_2\}=1$.
If $J$ is an ideal of $\bm{Z}[\zeta_n]$,
then $\langle J,m_1m_2\rangle=\langle J,m_1\rangle\cdot \langle J,m_2\rangle=\langle J,m_1\rangle \cap \langle J,m_2\rangle$.
\end{lemma}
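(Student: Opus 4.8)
The plan is to observe that this is a statement of elementary commutative algebra that makes no essential use of the arithmetic of $\bm{Z}[\zeta_n]$ beyond the fact that coprime integers generate the unit ideal. Write $R=\bm{Z}[\zeta_n]$ and set $I_1=\langle J,m_1\rangle=J+m_1R$ and $I_2=\langle J,m_2\rangle=J+m_2R$. The first thing I would record is that the hypothesis $\gcd\{m_1,m_2\}=1$ furnishes integers $a,b$ with $am_1+bm_2=1$, so that $1\in m_1R+m_2R$; consequently $I_1+I_2=J+m_1R+m_2R=R$, i.e.\ $I_1$ and $I_2$ are comaximal in $R$. This comaximality is the single fact that drives both equalities.

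Next I would establish the product equality $\langle J,m_1m_2\rangle=I_1I_2$ by a double inclusion. For $I_1I_2\subseteq\langle J,m_1m_2\rangle$, expand $I_1I_2=(J+m_1R)(J+m_2R)=J^2+m_1J+m_2J+m_1m_2R$ and note that the first three summands all lie in $J$ while the last lies in $\langle J,m_1m_2\rangle$. For the reverse inclusion it suffices to check that $J\subseteq I_1I_2$ and $m_1m_2\in I_1I_2$. The latter is immediate since $m_1\in I_1$ and $m_2\in I_2$. The former is the only step requiring a small idea: for $j\in J$ I would write $j=1\cdot j=(am_1+bm_2)j=a(m_1j)+b(m_2j)$, and since $m_1j\in I_1I_2$ (as $m_1\in I_1,\,j\in J\subseteq I_2$) and likewise $m_2j\in I_1I_2$, the element $j$ lies in the ideal $I_1I_2$.

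Finally I would deduce the intersection equality $I_1\cap I_2=I_1I_2$ from comaximality by the standard argument: the inclusion $I_1I_2\subseteq I_1\cap I_2$ holds for any pair of ideals, and for the reverse I would use the relation $1=am_1+bm_2$ again, now writing, for $x\in I_1\cap I_2$, the identity $x=x(am_1)+x(bm_2)$ and observing that $x(am_1)\in I_1I_2$ (since $x\in I_2$, $am_1\in I_1$) and $x(bm_2)\in I_1I_2$ (since $x\in I_1$, $bm_2\in I_2$). I do not anticipate any genuine obstacle here; the only point demanding care is the inclusion $\langle J,m_1m_2\rangle\subseteq I_1I_2$, specifically the verification that $J$ itself is absorbed into the product, which is exactly where the Bézout relation is used rather than any property of $J$ or of the cyclotomic ring.
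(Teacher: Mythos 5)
Your proof is correct and rests on exactly the same point as the paper's: the comaximality of $\langle J,m_1\rangle$ and $\langle J,m_2\rangle$ coming from the B\'ezout relation $am_1+bm_2=1$. The only (cosmetic) difference is that the paper chains the inclusions into the sandwich $\langle J,m_1m_2\rangle\subset\langle J,m_1\rangle\cap\langle J,m_2\rangle=\langle J,m_1\rangle\cdot\langle J,m_2\rangle\subset\langle J,m_1m_2\rangle$, which makes your separate verification that $J\subseteq\langle J,m_1\rangle\cdot\langle J,m_2\rangle$ unnecessary.
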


\begin{proof}
Since $m_1$ and $m_2$ are relatively prime, the ideal $\langle J,m_1 \rangle$ and $\langle J,m_2\rangle$ are comaximal. Thus $\langle J,m_1\rangle\cdot \langle J,m_2\rangle = \langle J,m_1\rangle \cap \langle J,m_2\rangle$

It is clear that $\langle J, m_1m_2\rangle \subset \langle J,m_1\rangle \cap \langle J, m_2\rangle
=\langle J,m_1\rangle\cdot \langle J,m_2\rangle \subset \langle J,m_1m_2\rangle$.
Done.
\end{proof}

Recall that, if $J$ is an ideal of $\bm{Z}[\zeta_n]$,
then the (absolute) norm of $J$, denoted by $N_{\bm{Q}(\zeta_n)/\bm{Q}}(J)$, is the index of $J$ in $\bm{Z}[\zeta_n]$;
in other words, $N_{\bm{Q}(\zeta_n)/\bm{Q}}(J)=|\bm{Z}[\zeta_n]/J|$
(see, for examples, \cite[pages 203--204]{IR}).
Consequently, if $a\in \bm{Z}[\zeta_n]\backslash \{0\}$,
then $N_{\bm{Q}(\zeta_n)/\bm{Q}}$ $(\langle a\rangle)=|\fn{Norm}_{\bm{Q}(\zeta_n)/\bm{Q}} (a)|$.

\begin{lemma} \label{l3.3}
Let $m=p^d$ where $p\ge 3$ is a prime number and $d\ge 1$.
Let $n$ be a positive integer with $n\mid \phi(m)$.
Write $n=p^{d_0}n_0$ where $d_0\ge 0$ and $p\nmid n_0$.
Suppose that $t\in (\bm{Z}/m\bm{Z})^\times$ is of order $n$.

$(1)$ If $e\mid n$ and $e=p^{d'}n'$ with $0\le d'\le d_0$ and $p\nmid n'$, then
\[
\langle \zeta_e-t,p\rangle =\begin{cases}
\text{a proper ideal of $\bm{Z}[\zeta_e]$, if }n'=n_0; \\ \bm{Z}[\zeta_e], \text{ otherwise}.
\end{cases}
\]

Moreover, for $1\le d'' \le d$, $\langle \zeta_e-t,p^{d''}\rangle=\langle \zeta_e-t,p\rangle ^{d''}$.

$(2)$ Every prime ideal of $\bm{Z}[\zeta_n]$ lying over $p$ is of the form $\langle \zeta_n-t',p\rangle$
where $t'$ is an integer and $t' \in (\bm{Z}/m\bm{Z})^\times$ is of order $n$. All of these prime ideals are conjugate in $\bm{Z}[\zeta_n]$. In fact, if $e\mid n$ and $1\le d'' \le d$, the ideals $\langle \zeta_e-t,p^{d''}\rangle$ and $\langle \zeta_e-t',p^{d''}\rangle$ are conjugate in $\bm{Z}[\zeta_e]$.

$(3)$ The following assertions are equivalent:
The ideal $\langle \zeta_n-t,p\rangle$ is a principal ideal
$\Leftrightarrow$ There is an element $\alpha\in \bm{Z}[\zeta_n]$ such that $N_{\bm{Q}(\zeta_n)/\bm{Q}}(\alpha)=\pm p$
$\Leftrightarrow$ For any $e\mid n$, the ideal $\langle \zeta_e-t,p\rangle $ is a principal ideal.
\end{lemma}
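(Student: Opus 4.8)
The plan is to reduce every assertion to the ring isomorphism $\bm{Z}[\zeta_e]\simeq\bm{Z}[X]/\langle\Phi_e(X)\rangle$ carrying $\zeta_e$ to $X$. Under it, for each $j\ge 1$,
\[
\bm{Z}[\zeta_e]\big/\langle\zeta_e-t,p^j\rangle\;\simeq\;\bm{Z}\big/\langle\Phi_e(t),p^j\rangle\;\simeq\;\bm{Z}/p^{\min\{j,\,\fn{ord}_p\Phi_e(t)\}},
\]
so $\langle\zeta_e-t,p\rangle$ is a proper ideal precisely when $p\mid\Phi_e(t)$, and when proper it is a maximal ideal with residue field $\bm{Z}/p$, hence a prime over $p$ of residue degree $1$. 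Inserting the values of $\fn{ord}_p\Phi_e(t)$ from Lemma \ref{l3.1}(2), which are $\ge 1$ exactly when $n'=n_0$, gives the first assertion of part (1). At the outset I would also record that $n_0\mid n\mid\phi(p^d)=p^{d-1}(p-1)$ with $p\nmid n_0$ forces $n_0\mid p-1$; thus $p$ splits completely in $\bm{Q}(\zeta_{n_0})$, and every prime of $\bm{Z}[\zeta_n]$ over $p$ (indeed of $\bm{Z}[\zeta_e]$ over $p$ for any $e\mid n$) has residue degree $1$. This is the fact that makes the normal form in part (2) available.

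For the ``moreover'' identity I would localize at $\mathfrak P=\langle\zeta_e-t,p\rangle$: since $\bm{Z}[\zeta_e]_{\mathfrak P}$ is a discrete valuation ring, and since $\zeta_e-t$ is a unit at every other prime over $p$ (its image is a nonzero residue there, those primes corresponding to the remaining roots of $\overline{\Phi_e}$), the equality $\langle\zeta_e-t,p^{d''}\rangle=\mathfrak P^{d''}$ collapses to the single numerical comparison $\min\{\,v_{\mathfrak P}(\zeta_e-t),\,d''\,v_{\mathfrak P}(p)\,\}=d''$, i.e.\ to $\fn{ord}_p\Phi_e(t)\ge d''$. Securing this last inequality for all $1\le d''\le d$ — with genuine care in the ramified range $d'\ge 1$, where $v_{\mathfrak P}(p)>1$ — is the step I expect to be the main obstacle, and it is exactly where the detailed valuations of Lemma \ref{l3.1} are indispensable. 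Granting the factorisation $\langle\zeta_e-t,p^{d''}\rangle=\mathfrak P^{d''}$, part (2) follows from Galois theory: $\bm{Q}(\zeta_n)/\bm{Q}$ is Galois, so $\fn{Gal}(\bm{Q}(\zeta_n)/\bm{Q})$ permutes the primes over $p$ transitively, yielding the conjugacy statement; since each such prime has residue degree $1$ it is cut out by a linear condition $\zeta_n\equiv t'$, and one may choose the lift $t'$ to have order $n$ in $(\bm{Z}/m\bm{Z})^\times$, giving the form $\langle\zeta_n-t',p\rangle$. Transporting the factorisation by the relevant automorphism then upgrades the conjugacy to the $p^{d''}$-powers.

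Part (3) I would prove as a short cycle. If $\langle\zeta_n-t,p\rangle=\langle\alpha\rangle$, then $|N_{\bm{Q}(\zeta_n)/\bm{Q}}(\alpha)|$ equals the index of the ideal, namely $p$, so $N_{\bm{Q}(\zeta_n)/\bm{Q}}(\alpha)=\pm p$. Conversely, if $N_{\bm{Q}(\zeta_n)/\bm{Q}}(\alpha)=\pm p$ then $\langle\alpha\rangle$ has index $p$, hence is a residue-degree-$1$ prime over $p$, hence is conjugate to $\langle\zeta_n-t,p\rangle$ by part (2); applying the inverse automorphism shows $\langle\zeta_n-t,p\rangle$ is principal. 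For the equivalence with the condition over all $e\mid n$, the case $e=n$ is immediate; for the reverse I would invoke transitivity of the norm: from $\alpha\in\bm{Z}[\zeta_n]$ with $N_{\bm{Q}(\zeta_n)/\bm{Q}}(\alpha)=\pm p$, the element $\beta:=N_{\bm{Q}(\zeta_n)/\bm{Q}(\zeta_e)}(\alpha)\in\bm{Z}[\zeta_e]$ satisfies $N_{\bm{Q}(\zeta_e)/\bm{Q}}(\beta)=\pm p$, and the first equivalence applied inside $\bm{Z}[\zeta_e]$ (whose primes over $p$ again all have residue degree $1$, the improper cases $\langle\zeta_e-t,p\rangle=\bm{Z}[\zeta_e]$ being trivial) shows $\langle\zeta_e-t,p\rangle$ principal. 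Thus the entire part (3) rests on parts (1) and (2), whose delicate point is the local identity singled out above.
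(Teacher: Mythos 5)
Your reduction of everything to $\bm{Z}[\zeta_e]\simeq\bm{Z}[X]/\langle\Phi_e(X)\rangle$, the computation $\bm{Z}[\zeta_e]/\langle\zeta_e-t,p^j\rangle\simeq\bm{Z}/p^{\min\{j,\,\fn{ord}_p\Phi_e(t)\}}$, the Galois-transitivity argument for (2) (the paper realizes the conjugates concretely as $\langle\zeta_n^u-t,p\rangle=\langle\zeta_n-t^v,p\rangle$ with $uv\equiv 1\pmod n$, which makes your ``choose the lift $t'$ of order $n$'' step automatic rather than something to be checked), and the whole of (3) via the relative norm are all correct and essentially identical to the paper's own proof.

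The genuine gap is the ``moreover'' identity in (1), which you correctly reduce to the inequality $\fn{ord}_p\Phi_e(t)\ge d''$ and then explicitly leave as ``the main obstacle''. That obstacle cannot be overcome in the stated generality: by Lemma \ref{l3.1}(2), when $d_0\ge 1$ one has $\fn{ord}_p\Phi_{p^{d'}n_0}(t)=1$ for $1\le d'\le d_0$ and $\fn{ord}_p\Phi_{n_0}(t)=d-d_0<d$, so the inequality fails for some $d''\le d$. Your own localization argument, pushed to its conclusion, then shows that $\langle\zeta_e-t,p^{d''}\rangle=\langle\zeta_e-t,p\rangle^{d''}$ is actually false in that range: for instance $p=3$, $m=9$, $n=e=3$, $t=4$, $d''=2$ gives $v_{\mathfrak P}(\zeta_3-4)=1$, hence $\langle\zeta_3-4,9\rangle=\mathfrak P\ne\mathfrak P^2$. (The paper's proof asserts at this point that $\langle\zeta_e-t,p^{d''}\rangle$ has norm $p^{d''}$; by your formula for the quotient, that assertion \emph{is} the unproved inequality, so your analysis is in fact sharper than the paper's and exposes the problem rather than resolving it.) The identity, and with it the last sentence of (2), holds exactly when $\fn{ord}_p\Phi_e(t)\ge d''$ --- in particular whenever $p\nmid n$, which is the only case the paper subsequently uses. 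So the step you deferred is not a technicality to be filled in later: you must either restrict the claim (to $p\nmid n$, or to $d''\le\fn{ord}_p\Phi_e(t)$) and then prove the inequality from Lemma \ref{l3.1}(2), or note that as stated it is false.
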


\begin{proof}
(1) If $e\mid n$ with $e=p^{d'}n'$ where $n'=n_0$, we will show that $\langle \zeta_e-t,p\rangle \subsetneq \bm{Z}[\zeta_e]$.
Otherwise, there are $x,y\in\bm{Z}[\zeta_e]$ such that $x(\zeta_e-t)+py=1$.
For any $g\in\fn{Gal}(\bm{Q}(\zeta_e)/\bm{Q})$, we have $g(x)\cdot (g(\zeta_e)-t)+pg(y)=1$.
Hence $1=\prod_g [g(x)(g(\zeta_e)-t)+pg(y)]=\alpha\cdot \Phi_e(t)+p\cdot \beta$ for some $\alpha,\beta\in \bm{Z}[\zeta_e]$.
By Lemma \ref{l3.1} (2), we have $p\mid \Phi_e(t)$.
Thus $p\mid \alpha \Phi_e(t)+p\beta$ in $\bm{Z}[\zeta_e]$.
A contradiction.

Now suppose that $e\mid n$ with $n'<n_0$.
By Lemma \ref{l3.1} (2) again, we find that $p\nmid \Phi_e(t)$.
Thus we may find integers $a$ and $b$ with $a\Phi_e(t)+pb=1$.
Since $\Phi_e(t)=\prod_i (t-\zeta_e^i)$ where $i$ runs over integers in $(\bm{Z}/e\bm{Z})^\times$,
it follows that $\langle \zeta_e-t,p\rangle=\bm{Z}[\zeta_e]$.

We will prove that $\langle \zeta_e-t,p^{d''}\rangle=\langle \zeta_e-t,p\rangle ^{d''}$. If $\langle \zeta_e-t,p\rangle= \bm{Z}[\zeta_e]$, then $\alpha (\zeta_e-t)+p\beta=1$ for some $\alpha,\beta \in \bm{Z}[\zeta_e]$. Hence $1=(\alpha (\zeta_e-t)+p\beta)^{d''}=\gamma (\zeta_e-t)+p^{d''}\beta^{d''} \in \langle \zeta_e-t,p ^{d''}\rangle$ for some $\gamma \in \bm{Z}[\zeta_e]$. It remains to consider the case when $\langle \zeta_e-t,p\rangle$ is a proper ideal of $\bm{Z}[\zeta_e]$. The ideals $\langle \zeta_e-t,p\rangle ^{d''}$ and $\langle \zeta_e-t,p^{d''}\rangle$ are of the same norm $p^{d''}$. Since $\langle \zeta_e-t,p\rangle ^{d''} \subset \langle \zeta_e-t,p^{d''}\rangle$, thus they are equal.

\medskip
(2) By (1), the ideal $\langle \zeta_n-t,p\rangle$ is of norm $p$;
thus it is a prime ideal lying over $p$. By \cite[page 182, Proposition 12.3.3]{IR} all the prime ideals of $\bm{Z}[\zeta_n]$ lying over $p$ are conjugate to each other. Thus they are of the form $\langle \zeta_n^u-t,p\rangle$ where $u$ is an integer, $1 \le u \le n-1$ and gcd$\{n, u \}=1$. For each $u$, choose an integer $v$ with $1 \le v \le n-1$ and $uv \equiv 1 (\bmod \, n)$. We will show that $\langle \zeta_n^u-t,p\rangle = \langle \zeta_n-t^v,p\rangle$. Since $\zeta_n-t^v=\zeta_n^{uv}-t^v$ is divisible by $\zeta_n^u -t$, we find that $\langle \zeta_n^u-t,p\rangle \supset \langle \zeta_n-t^v,p\rangle$. These two ideals are of the same norm $p$ by (1) (note that $\bm{Z}[\zeta_n]=\bm{Z}[\zeta_n^u]$). Thus they are equal. We conclude that all the prime ideals over $p$ are conjugate and are of the required form.

We will show that $\langle \zeta_e-t,p^{d''}\rangle$ and $\langle \zeta_e-t',p^{d''}\rangle$ are conjugate. If $\langle \zeta_e-t,p\rangle= \bm{Z}[\zeta_e]$, then $\langle \zeta_e-t',p\rangle= \bm{Z}[\zeta_e]$ because it depends only on the factorizations of $n$ and $e$ by (1). Using (1) again, we find that $\langle \zeta_e-t,p^{d''}\rangle=\bm{Z}[\zeta_e]=\langle \zeta_e-t',p^{d''}\rangle$. Now assume that both $\langle \zeta_e-t,p\rangle$ and $\langle \zeta_e-t',p\rangle$ are proper ideal of $\bm{Z}[\zeta_e]$. Then they are prime ideals lying over $p$. Hence they are conjugate in $\bm{Z}[\zeta_e]$. By (1) we find that $\langle \zeta_e-t,p^{d''}\rangle$ and $\langle \zeta_e-t',p^{d''}\rangle$ are also conjugate.

\medskip
(3) If $\alpha\in \bm{Z}[\zeta_n]$ is of norm $\pm p$, then $|\bm{Z}[\zeta_n]/\langle \alpha \rangle |=p$. Thus $\langle \alpha \rangle$ is a prime ideal and $p \in \langle \alpha \rangle$. Hence $\langle \alpha \rangle$ is a prime ideal over $p$. It follows that $\langle \alpha\rangle=\langle \zeta_n-t',p\rangle$ for some $t'\in (\bm{Z}/m\bm{Z})^\times$ of order $n$. Since $\langle \zeta_n-t,p\rangle$ and $\langle \zeta_n-t',p\rangle$ are conjugate by (2), we find that $\langle \zeta_n-t,p\rangle$ is a principal ideal.

Now assume that $\langle \zeta_n-t,p\rangle$ is a principal ideal.
For any $e\mid n$, consider $\langle \zeta_e-t,p\rangle$.
If $\langle \zeta_e-t,p\rangle=\bm{Z}[\zeta_e]$, there is nothing to prove.
Now assume that $\langle \zeta_e-t,p\rangle \subsetneq \bm{Z}[\zeta_e]$.
Then $\langle \zeta_e-t,p\rangle$ is a prime ideal lying over $p$.
Since there is an element $\alpha\in \bm{Z}[\zeta_n]$ with $N_{\bm{Q}(\zeta_n)/\bm{Q}} (\alpha)=\pm p$,
define $\beta=N_{\bm{Q}(\zeta_n)/\bm{Q}(\zeta_e)} (\alpha)$.
Then $N_{\bm{Q}(\zeta_e)/\bm{Q}}(\beta)=\pm p$.
It follows that $\langle \beta\rangle$ is a prime ideal of $\bm{Z}[\zeta_e]$ over $p$.
But $\langle \zeta_e-t,p\rangle$ is also a prime ideal over $p$.
Thus $\langle \zeta_e-t,p\rangle$ is conjugate to $\langle \beta \rangle$.
Hence $\langle \zeta_e-t,p\rangle$ is a principal ideal of $\bm{Z}[\zeta_e]$.
\end{proof}

\section{The main results}

Recall that, for positive integers $m$, $n$ and the integer $t\in (\bm{Z}/m\bm{Z})^\times$ of order $n$, the group $G_{m,n}$ is defined in Definition \ref{d1.2}. In this section, We will find sufficient conditions to guarantee that $\bm{C}(G_{m,n})$ is rational over $\bm{C}$ under various situations for $m$ and $n$.

\begin{theorem}[{\cite[Theorem 1.11; Vo; Le, Theorem 2.6]{EM1}}] \label{t4.1}
Let $K/k$ be a finite Galois extension with $\pi=\fn{Gal}(K/k)$.
Let $M$ be a $\pi$-lattice.

Assume furthermore that $\pi=\langle \tau\rangle \simeq C_n$ and $M$ is a projective $\bm{Z}\pi$-module.
Then the following three statements are equivalent:
\begin{enumerate}
\item[(i)]
the field $K(M)^\pi$ is stably rational over $k$;
\item[(ii)]
the field $K(M)^\pi$ is rational over $k$;
\item[(iii)]
for any $e\mid n$, the module $M/\Phi_e(\tau)M$ is a free $\bm{Z}[\zeta_e]$-module.
(Note that $M/\Phi_e(\tau)M$ is a module over $\bm{Z}\pi/\Phi_e(\tau)\simeq \bm{Z}[\zeta_e]$.)
\end{enumerate}
\end{theorem}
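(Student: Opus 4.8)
The plan is to recast the three statements in the flabby-class language of Section 2 and reduce the problem to arithmetic over the rings $\bm{Z}[\zeta_e]$. The implication (ii)$\Rightarrow$(i) is immediate, since rational extensions are stably rational. For the rest, I would first note that a projective $\bm{Z}\pi$-module is a direct summand of a free module $(\bm{Z}\pi)^r$; as $\bm{Z}\pi$ is a permutation $\pi$-lattice (its $\bm{Z}$-basis $\pi$ is permuted by left multiplication), $M$ is a direct summand of a permutation lattice, hence invertible and in particular flabby. Consequently $[M]^{fl}=[M]$ in $F_\pi$, and Theorem \ref{t2.5}(1) identifies statement (i) with the purely lattice-theoretic condition $[M]=0$, i.e.\ that $M$ is stably permutation. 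Thus the whole theorem reduces to (a) the arithmetic equivalence between ``$M$ stably permutation'' and (iii), and (b) the upgrade from stable rationality to rationality.

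For (a) I would use the decomposition $\bm{Q}\pi\simeq\prod_{e\mid n}\bm{Q}(\zeta_e)$ induced by $\tau^n-1=\prod_{e\mid n}\Phi_e(\tau)$, whose maximal order is $\Lambda:=\prod_{e\mid n}\bm{Z}[\zeta_e]$. For projective $M$ the quotient $M/\Phi_e(\tau)M$ is the $e$-th component $M\otimes_{\bm{Z}\pi}\bm{Z}[\zeta_e]$ of $M\otimes_{\bm{Z}\pi}\Lambda$, a locally free $\bm{Z}[\zeta_e]$-module whose freeness is the vanishing of its class in $\fn{Cl}(\bm{Z}[\zeta_e])$. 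Hence (iii) is equivalent to the image of $[M]$ under $\widetilde{K}_0(\bm{Z}\pi)\to\widetilde{K}_0(\Lambda)\simeq\bigoplus_{e\mid n}\fn{Cl}(\bm{Z}[\zeta_e])$ being trivial, and the key equivalence to establish is
\[
M \text{ is stably permutation} \iff M/\Phi_e(\tau)M \text{ is } \bm{Z}[\zeta_e]\text{-free for every } e\mid n.
\]
I expect this to be the main arithmetic obstacle: one must show that adjoining the classes of the cyclotomic permutation lattices $\bm{Z}[\pi/\langle\tau^d\rangle]$ ($d\mid n$) collapses precisely the kernel group $D(\bm{Z}\pi)=\ker(\widetilde{K}_0(\bm{Z}\pi)\to\widetilde{K}_0(\Lambda))$, so that the only invariant of a projective module surviving modulo stably-permutation is its image in $\bigoplus_e\fn{Cl}(\bm{Z}[\zeta_e])$. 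This is the genuinely number-theoretic input of Endo--Miyata and Lenstra, and it is where the cyclicity of $\pi$ is essential.

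For (b) I would argue as follows. From $[M]=0$ choose permutation lattices $P_1,P_2$ with $M\oplus P_1\simeq P_2$. Applying the No-Name Lemma (Theorem \ref{t2.6}) over the base field $K$ to the permutation---hence linear---action on the variables of $P_2$ shows $K(P_2)^\pi$ is rational over $k$; applying it instead over the base field $L:=K(M)$ (on which $\pi$ acts faithfully, since it already does so on $K\subset L$) to the linear action on the variables of $P_1$ gives $K(M\oplus P_1)^\pi=K(M)^\pi(w_1,\dots,w_r)$ with the $w_i$ transcendental. Combining, $K(M)^\pi(w_1,\dots,w_r)=K(P_2)^\pi$ is rational, so $K(M)^\pi$ is stably rational---which already yields (i). The hard part will be descending this to genuine rationality: the cancellation of the auxiliary transcendentals $w_1,\dots,w_r$ is exactly the step where stable rationality is strictly weaker than rationality, and I do not expect the lattice formalism alone to close it. Here I would fall back on the explicit structure afforded by projectivity together with condition (iii)---realizing $M$ concretely up to the already-controlled class-group data, so that the torus $K(M)^\pi$ admits a direct rational parametrization---which is the delicate point that makes (ii) strictly stronger than (i) and the true crux of the theorem.
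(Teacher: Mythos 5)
This statement is one the paper does not prove at all: Theorem \ref{t4.1} is quoted from Endo--Miyata, Voskresenskii and Lenstra, so there is no internal proof to compare against. Judged on its own terms, your proposal correctly disposes of (ii)$\Rightarrow$(i), and your reduction of (i) to the lattice condition is essentially right: projective $\Rightarrow$ summand of $\bm{Z}\pi^{(r)}$ $\Rightarrow$ invertible $\Rightarrow$ flabby, and Theorem \ref{t2.5}(1) converts (i) into a statement about $[M]^{fl}$. (One small slip: from $M\oplus N\simeq Q$ permutation you get the flabby resolution $0\to M\to Q\to N\to 0$, hence $[M]^{fl}=[N]=-[M]$ in the group $F_\pi$, not $[M]^{fl}=[M]$; this is harmless since you only test vanishing.) Your identification of (iii) with the vanishing of the image of $[M]$ in $\bigoplus_{e\mid n}C(\bm{Z}[\zeta_e])$ is also correct, and the ``arithmetic obstacle'' you isolate --- that stably permutation is detected exactly by these class groups --- is precisely the Endo--Miyata/Swan isomorphism that the paper records as Theorem \ref{t4.2}, so deferring to it is legitimate.

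The genuine gap is the implication (iii)$\Rightarrow$(ii), which you explicitly leave open. This is not a peripheral refinement: the equivalence of (i) and (iii) is already a formal consequence of Theorems \ref{t2.5} and \ref{t4.2}, so the entire added content of Theorem \ref{t4.1} is that for projective $M$ over a \emph{cyclic} $\pi$ one gets honest rationality, and your proposal contains no argument for it. The missing steps are: first, since $\bm{Z}\pi$ is commutative of Krull dimension one, a rank-$r$ projective module satisfies $M\simeq \bm{Z}\pi^{(r-1)}\oplus I$ for a projective ideal $I$ (Serre/Steinitz), and the No-Name Lemma (Theorem \ref{t2.6}) applied to the permutation summand $\bm{Z}\pi^{(r-1)}$ over the base $K(I)$ reduces everything to showing that $K(I)^\pi$ itself is rational of transcendence degree $n$ over $k$; second, for such a rank-one projective ideal with every $I/\Phi_e(\tau)I$ principal, Lenstra and Endo--Miyata produce an explicit system of invariant generators (by induction on the prime factorization of $n$ via the fibre-product description of $\bm{Z}\pi$, together with Lenstra's computation of $I/\Phi_e(\tau)I$). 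Without that construction you have only reproved stable rationality, i.e.\ the content of Theorem \ref{t2.5}(1), and the equivalence (i)$\Leftrightarrow$(ii) --- the part of the theorem that the paper actually exploits in Step 5 of the proof of Theorem \ref{t4.3} --- remains unestablished.
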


Before stating Theorem \ref{t4.2}, we explain some terminology.
We denote by $C(\bm{Z}[\zeta_e])$ the ideal class group of $\bm{Z}[\zeta_e]$,
i.e.\ the quotient group of the group of non-zero fractional ideals of $\bm{Z}[\zeta_e]$ by the subgroup of principal ideals. Thus, if $M$ is a finitely generated torsion-free $\bm{Z}[\zeta_e]$-module and
$M\simeq I_1\oplus I_2\oplus \cdots \oplus I_l$ where each $I_j$ is a non-zero ideal of $\bm{Z}[\zeta_e]$,
then the class of $M$ in $C(\bm{Z}[\zeta_e])$,
denoted by $[M]$, is defined as $[M]=[I_1\cdot I_2\cdots I_l]\in C(\bm{Z}[\zeta_e])$.
If $M$ is a finitely generated $\bm{Z}[\zeta_e]$-module,
$(M)_0$ denotes $M/t(M)$ where $t(M)$ is the torsion submodule of $M$.

\begin{theorem}[{\cite[page 86; Sw3, Theorem 2.10; EK, Theorem 1.4]{EM2}}] \label{t4.2}
Let $\pi=\langle\tau\rangle \simeq C_n$, $F_\pi$ be the flabby class monoid of Definition \ref{d2.3}.
Then $F_\pi$ is a finite group and the group homomorphism $c:F_\pi \to \bigoplus_{e\mid n} C(\bm{Z}[\zeta_e])$
defined by $c([M])=(\ldots,[(M/\Phi_e(\tau)M)_0],\ldots)$ is an isomorphism where $M$ is a flabby $\pi$-lattice.
\end{theorem}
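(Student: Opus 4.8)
The plan is to prove the statement by showing directly that $c$ is a well-defined monoid homomorphism onto the finite abelian group $A:=\bigoplus_{e\mid n}C(\bm{Z}[\zeta_e])$ and that it is both injective and surjective. Since a bijective homomorphism of monoids is a monoid isomorphism, and since $A$ is a group, this simultaneously forces $F_\pi$ to be a finite group and identifies $c$ as an isomorphism; thus the two assertions of the theorem come out together and there is no need to establish the group structure of $F_\pi$ beforehand.

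First I would set up the target. Writing $\Lambda:=\bm{Z}\pi\simeq\bm{Z}[X]/(X^n-1)$ and using $X^n-1=\prod_{e\mid n}\Phi_e(X)$, for each $e\mid n$ one has, since $\Phi_e(X)\mid X^n-1$, a ring isomorphism $\Lambda/\Phi_e(\tau)\Lambda\simeq\bm{Z}[X]/\Phi_e(X)=\bm{Z}[\zeta_e]$, the ring of integers of $\bm{Q}(\zeta_e)$, which is a Dedekind domain. Hence for any $\pi$-lattice $M$ the module $(M/\Phi_e(\tau)M)_0$ is finitely generated and torsion-free over $\bm{Z}[\zeta_e]$, so it is projective and carries a well-defined class $[(M/\Phi_e(\tau)M)_0]\in C(\bm{Z}[\zeta_e])$ as explained before the theorem. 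The assignment $M\mapsto(M/\Phi_e(\tau)M)_0$ commutes with $\oplus$ and the class in $C(\bm{Z}[\zeta_e])$ is additive on direct sums of projectives, so $c$ is additive. To see that $c$ descends to $F_\pi$ I would check that permutation lattices are sent to $0$: by additivity it suffices to treat $Q=\bm{Z}[\pi/\pi']$ with $\pi'\le\pi$ of index $f$, where $Q\simeq\bm{Z}[X]/(X^f-1)$ as a $\Lambda$-module, so $Q/\Phi_e(\tau)Q\simeq\bm{Z}[X]/(X^f-1,\Phi_e(X))$; this equals $\bm{Z}[\zeta_e]$ (free, hence trivial class) when $e\mid f$ and is a finite torsion ring (so its torsion-free part vanishes) otherwise. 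Finally $A$ is finite, since each cyclotomic class number is finite and there are only finitely many $e\mid n$.

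For surjectivity I would realize prescribed data by explicit flabby lattices, modeled on the rank-one situation available when $n=p$ is prime. There the projective $\bm{Z}C_p$-lattice $M_I$ sitting in $0\to I\to M_I\to\bm{Z}\to0$ (with $I$ an ideal of $\bm{Z}[\zeta_p]$ on which $\tau$ acts by $\zeta_p$) is flabby, and one computes $(M_I/\Phi_p(\tau)M_I)_0\simeq I$ together with $(M_I/(\tau-1)M_I)_0\simeq\bm{Z}$, so that $c([M_I])=[I]$ exhausts $C(\bm{Z}[\zeta_p])$. For general $n$, given a tuple $([I_e])_{e\mid n}$ I would assemble a full $\Lambda$-lattice inside $\bm{Q}\pi=\prod_{e\mid n}\bm{Q}(\zeta_e)$ whose $e$-th projection is $I_e$, pass to the flabby term of a flabby resolution, and track the (permutation-invariant) effect on $c$ to conclude that the resulting flabby class maps to $([I_e])_e$. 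The bookkeeping through the resolution is routine once the prime-level model is in hand.

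The main obstacle is injectivity: if $M$ is flabby and every $(M/\Phi_e(\tau)M)_0$ is a \emph{free} $\bm{Z}[\zeta_e]$-module, I must show that $M$ is stably permutation. The cleanest route exploits the two results already available for \emph{projective} lattices: for a projective $\Lambda$-lattice $P$ one has $[P]^{fl}=[P]$, so by Theorem \ref{t2.5}(1) together with Theorem \ref{t4.1}, $[P]=0$ in $F_\pi$ holds exactly when every $P/\Phi_e(\tau)P$ is free, i.e.\ exactly when $c([P])=0$; thus $c$ is already injective on classes of projective lattices. The real work is to reduce a general flabby class to this case, equivalently to prove that $c$ is a \emph{complete} invariant. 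For cyclic $\pi$ every flabby lattice is invertible, so $M\oplus N$ is permutation for some flabby $N$, and I would compare $M$ with an auxiliary projective lattice of the same rank and the same classes via the conductor (Milnor) square relating $\Lambda$ to its maximal order $\Gamma=\prod_{e\mid n}\bm{Z}[\zeta_e]$, combined with Jordan--Zassenhaus/Roiter-type results asserting that an invertible $\Lambda$-lattice is determined within its genus by its classes $[(M/\Phi_e(\tau)M)_0]$. Showing that the permutation lattices account for precisely the $K_1(\Gamma/\text{conductor})$-part of the relative class group, leaving only $\bigoplus_{e\mid n}C(\bm{Z}[\zeta_e])$, is where genuine integral representation theory enters; I expect this completeness step to be the hardest part, since the formal properties of $c$ established above do not by themselves rule out a nontrivial kernel.
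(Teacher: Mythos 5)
The paper does not prove this statement at all: Theorem \ref{t4.2} is quoted from \cite{EM2}, \cite{Sw3} and \cite{EK}, so your attempt must be judged on its own. The parts you carry out are the formal ones, and they are essentially right: $c$ is additive, it kills permutation lattices (your computation of $Q/\Phi_e(\tau)Q$ for $Q=\bm{Z}[\pi/\pi']$ is correct), the target $\bigoplus_{e\mid n}C(\bm{Z}[\zeta_e])$ is a finite group, and $c$ has trivial kernel on classes of \emph{projective} lattices by combining Theorem \ref{t2.5}(1) with Theorem \ref{t4.1}. One small slip there: for projective $P$ with $P\oplus P'\simeq \bm{Z}\pi^{(l)}$ one has $[P]^{fl}=[P']=-[P]$, not $[P]$; this does not damage the equivalence $[P]=0\Leftrightarrow c([P])=0$ that you actually use.

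The genuine gap is the one you flag yourself, and it is not a technicality but the entire content of the theorem. Injectivity requires showing that a flabby lattice $M$ with every $(M/\Phi_e(\tau)M)_0$ free is stably permutation, and your reduction to the projective case is only a list of ingredients (invertibility of flabby classes for cyclic $\pi$, the conductor square for $\bm{Z}\pi\subset\prod_{e\mid n}\bm{Z}[\zeta_e]$, genus theory, and the claim that permutation lattices absorb exactly the relative class group coming from the conductor). That last claim is precisely what Endo--Miyata and Swan prove, and nothing in your formal properties of $c$ forces it; a priori $c$ could have a nontrivial kernel supported on non-projective flabby classes. Surjectivity for composite $n$ is likewise only asserted (``the bookkeeping \ldots is routine''), and even the prime-level identification $(M_I/\Phi_p(\tau)M_I)_0\simeq I$ needs a word beyond the finite-index inclusion, since a finite-index subideal can lie in a different class (here one is saved because the discrepancy is supported at the principal prime $\langle 1-\zeta_p\rangle$, but you do not say so). As written this is a correct plan with the two hard steps open, not a proof; for this result the appropriate course is to cite \cite{EM2} or \cite{Sw3}, as the paper does.
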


\begin{theorem} \label{t4.3}
Let $m$ and $n$ be positive integers.
Assume that (i) $\gcd\{m,n\}=1$, there is an integer $t$ such that $t\in (\bm{Z}/m\bm{Z})^\times$ is of order $n$, and (ii) for any $e\mid n$, the ideal $\langle \zeta_e-t,m\rangle$ in $\bm{Z}[\zeta_e]$ is a principal ideal. If $k$ is a field with $\zeta_m,\zeta_n \in k$, then $k(G_{m,n})$ is rational over $k$.
\end{theorem}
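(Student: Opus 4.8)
The plan is to peel off the normal subgroup $\langle\sigma\rangle$ by diagonalization, reducing $k(G_{m,n})$ to a purely monomial invariant field $k(N_0)^\pi$ for an explicit projective $\pi$-lattice $N_0$ (with $\pi=\langle\tau\rangle\simeq C_n$), and then to read off rationality from Theorem~\ref{t4.1}; the real content is to identify the obstruction to freeness with the class of the ideal $\langle\zeta_e-t,m\rangle$ in $C(\bm{Z}[\zeta_e])$. Since $\zeta_m\in k$, the substitution $y_{i,j}=\sum_{0\le l\le m-1}\zeta_m^{-il}\,x(\sigma^l\tau^j)$ $(i\in\bm{Z}/m\bm{Z},\ j\in\bm{Z}/n\bm{Z})$ is an invertible $k$-linear change of the $mn$ variables, and a direct computation gives $\sigma\cdot y_{i,j}=\zeta_m^{\,i}y_{i,j}$ and $\tau\cdot y_{i,j}=y_{ti,\,j+1}$, so that $\sigma$ acts diagonally while $\tau$ merely permutes the $y_{i,j}$. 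Taking $\langle\sigma\rangle$-invariants, $k(y_{i,j})^{\langle\sigma\rangle}=k(N_0)$ is rational over $k$ (rationality by Fischer's Theorem~\ref{t1.5}, the explicit description being the standard invariant-monomial computation), where $N=\bigoplus_{i,j}\bm{Z}e_{i,j}$ is the monomial lattice and $N_0=\ker(\deg_\sigma)$ is the kernel of $\deg_\sigma\colon N\to(\bm{Z}/m\bm{Z})_t$, $e_{i,j}\mapsto i$; here $(\bm{Z}/m\bm{Z})_t$ denotes $\bm{Z}/m\bm{Z}$ with $\tau$ acting by multiplication by $t$. As $\langle\sigma\rangle$ is normal in $G_{m,n}$, the group $\pi$ acts on $k(N_0)$ and $k(G_{m,n})=k(N_0)^\pi$, a purely monomial action since $\tau$ fixes $k$ pointwise.

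Next I would analyze $N_0$ as a $\pi$-lattice. The permutation action $(i,j)\mapsto(ti,j+1)$ on $(\bm{Z}/m\bm{Z})\times(\bm{Z}/n\bm{Z})$ is free (the second coordinate leaves no nontrivial stabilizer), so $N\simeq(\bm{Z}\pi)^m$ is free and there is a short exact sequence of $\pi$-lattices $0\to N_0\to N\xrightarrow{\deg_\sigma}T\to 0$ with $T=(\bm{Z}/m\bm{Z})_t$. The hypothesis $\gcd\{m,n\}=1$ now forces $N_0$ to be projective: for every subgroup $\pi'\le\pi$ the Tate cohomology $\hat{H}^{r}(\pi',T)$ is killed both by $|\pi'|$ (a divisor of $n$) and by $|T|=m$, hence vanishes, so the long exact cohomology sequence together with $\hat{H}^{r}(\pi',N)=0$ (freeness of $N$) gives $\hat{H}^{r}(\pi',N_0)=0$ for all $r$ and all $\pi'$; a finitely generated $\bm{Z}$-free cohomologically trivial module is projective.

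The heart of the proof is the computation of $N_0/\Phi_e(\tau)N_0$ for each $e\mid n$. Identifying $T$ with the cyclic module $\bm{Z}\pi/\langle\tau-t,m\rangle$ yields a second resolution $0\to\langle\tau-t,m\rangle\to\bm{Z}\pi\to T\to 0$, so Schanuel's lemma gives $N_0\oplus\bm{Z}\pi\simeq\langle\tau-t,m\rangle\oplus(\bm{Z}\pi)^m$. I would then apply $-\otimes_{\bm{Z}\pi}\bm{Z}[\zeta_e]$, using $\bm{Z}\pi/\Phi_e(\tau)\simeq\bm{Z}[\zeta_e]$ (via $\tau\mapsto\zeta_e$). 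Reducing the second resolution modulo $\Phi_e(\tau)$ identifies the image of $\langle\tau-t,m\rangle$ in $\bm{Z}[\zeta_e]$ with the ideal $\langle\zeta_e-t,m\rangle$; since $N_0$ is projective, $N_0/\Phi_e(\tau)N_0$ is a projective, hence torsion-free, $\bm{Z}[\zeta_e]$-module, which forces $\fn{Tor}_1^{\bm{Z}\pi}(T,\bm{Z}[\zeta_e])$ to vanish and yields $N_0/\Phi_e(\tau)N_0\oplus\bm{Z}[\zeta_e]\simeq\langle\zeta_e-t,m\rangle\oplus\bm{Z}[\zeta_e]^m$. Comparing Steinitz classes over the Dedekind domain $\bm{Z}[\zeta_e]$ then gives $[N_0/\Phi_e(\tau)N_0]=[\langle\zeta_e-t,m\rangle]$ in $C(\bm{Z}[\zeta_e])$.

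Finally, hypothesis (ii) makes each $\langle\zeta_e-t,m\rangle$ principal, so every class above is trivial and each projective $\bm{Z}[\zeta_e]$-module $N_0/\Phi_e(\tau)N_0$ of rank $m$ is free. This is exactly condition (iii) of Theorem~\ref{t4.1} for the projective $\pi$-lattice $N_0$ (applied in the purely monomial setting, which is legitimate because $\zeta_n\in k$ and the sources cited for Theorem~\ref{t4.1} cover the multiplicative action), whence $k(N_0)^\pi=k(G_{m,n})$ is rational over $k$. I expect the main obstacle to be the third paragraph: verifying that the obstruction is precisely $\langle\zeta_e-t,m\rangle$ rather than a conjugate or a twist of it, and that no spurious torsion survives reduction modulo $\Phi_e(\tau)$. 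Any ambiguity arising from the choice of the generator $t$ should be absorbed by Lemma~\ref{l3.3}, which shows that the principality of $\langle\zeta_e-t,m\rangle$ is insensitive to that choice.
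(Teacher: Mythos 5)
Your argument is correct in substance and arrives at exactly the same obstruction as the paper, but the route differs in two genuine ways. The paper first applies the No-Name Lemma (Theorem \ref{t2.6}) to collapse the regular representation onto the $n$ variables $y_j=\tau^j X$, so the relevant lattice is the rank-one Masuda ideal $M=\langle\tau-t,m\rangle\subset\bm{Z}\pi$ itself; projectivity of $M$ is quoted from Swan (an ideal of index prime to $|\pi|$ is projective), and $M/\Phi_e(\tau)M\simeq\langle\zeta_e-t,m\rangle$ is read off from Lenstra's Proposition 2.2. You instead keep all $mn$ variables, work with the rank-$m$ kernel lattice $N_0$, prove its projectivity by cohomological triviality of $T=(\bm{Z}/m\bm{Z})_t$ (using $\gcd\{m,n\}=1$ in precisely the same role the paper gives it), and then transfer the class computation back to the Masuda ideal via Schanuel's lemma and Steinitz classes. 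Both work; yours trades the early No-Name reduction for the Schanuel/Steinitz bookkeeping, and your cohomological proof of projectivity is self-contained where the paper cites \cite{Sw1}.

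The one step you should not wave through is the final appeal to Theorem \ref{t4.1}. As stated, that theorem concerns $K(M)^\pi$ with $\pi=\fn{Gal}(K/k)$ acting \emph{faithfully} on the base field, whereas your $\pi$ acts trivially on $k$; the parenthetical claim that ``the sources cover the multiplicative action'' is exactly the point that Steps 2--3 of the paper's proof exist to handle: they adjoin $M'\simeq\bm{Z}\pi$, use the No-Name Lemma to see $k(M\oplus M')^\pi=k(M)^\pi(w_1,\ldots,w_n)$, and apply Theorem \ref{t4.1} over $k_0=k(M')^\pi$, which is rational by Theorem \ref{t1.5}. In your setting the repair is even cheaper and needs no extra variables: the $\tau$-orbit $\{e_{0,j}\}$ already lies in $N_0$ and spans a direct summand $Q\simeq\bm{Z}\pi$ (indeed $N_0=(N'\cap N_0)\oplus Q$ where $N'=\bigoplus_{i\ne 0,\,j}\bm{Z}e_{i,j}$), so $k(N_0)^\pi=K(N_0')^\pi$ with $K=k(Q)$ a genuine $\pi$-Galois extension of $K^\pi$; since $N_0=N_0'\oplus Q$, freeness of $N_0/\Phi_e(\tau)N_0$ is equivalent to freeness of $N_0'/\Phi_e(\tau)N_0'$, and Theorem \ref{t4.1} then applies as stated. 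With that observation made explicit, your proof is complete.
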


\begin{proof}
The situation $m=1$ or $n=1$ is trivial.
Thus we may assume that $m,n\ge 2$. For simplicity, write $G=G_{m,n}$ and $G=\langle\sigma,\tau:\sigma^m=\tau^n=1,\tau^{-1}\sigma\tau=\sigma^t\rangle$.

\medskip
Step 1.
Let $V:=V_{reg}=\bigoplus_{g\in G} k\cdot u_g$ be the regular representation space of $G$ such that $h\cdot u_g=u_{hg}$ for any $g,h\in G$.
Let $\{x(g):g\in G\}$ be the dual basis of $\{u_g:g\in G\}$.
Then the induced action of $G$ acts on the dual space $V^*=\bigoplus_{g\in G} k\cdot x(g)$ by $h\cdot x(g)=x(hg)$ for any $g,h\in G$. Since $k[V]$ is the symmetric algebra of $V^*$, we find that $k(G)=k(V)^G=k(x(g):g\in G)^G$.

\medskip
Define $X=\sum_{0\le i\le m-1} \zeta_m^{-i} x(\sigma^i)\in V^*$. Then $\sigma\cdot X=\zeta_m X$.

Define $y_j=\tau^j\cdot X\in V^*$ for $0\le j\le n-1$.

It is easy to verify that
\begin{align*}
\sigma &: y_j \mapsto \zeta_m^{t^j} y_j \text{ for }0\le j\le n-1, \\
\tau &: y_0\mapsto y_1\mapsto \cdots\mapsto y_{n-1}\mapsto y_0.
\end{align*}

By Theorem \ref{t2.6}, $k(G)=k(y_j:0\le j\le n-1)^G (z_1,\ldots,z_{mn-n})$ where $g\cdot z_i=z_i$ for all $1\le i\le mn-n$ and for all $g\in G$.

\medskip
Now $k(y_j:0\le j\le n-1)^{\langle \sigma\rangle}=k(y_0^m,y_j/y_{j-1}^t: 1\le j\le n-1)$.

Let $\pi=\langle \tau\rangle$ and $\langle y_j:0\le j\le n-1\rangle$ be the multiplicative subgroup of $k(y_j:0\le j\le n-1)\backslash \{0\}$.
As a $\pi$-lattice, $\langle y_j:0\le j\le n-1\rangle \simeq \bm{Z}\pi$.
Define a $\pi$-sublattice $M$ of $\bm{Z}\pi$ by
\begin{equation}
M=\langle \tau-t,m \rangle \subset \bm{Z}\pi  \label{eq4.1}
\end{equation}
Under the isomorphism $\langle y_j:0\le j\le n-1\rangle \simeq \bm{Z}\pi$, $M$ corresponds to $\langle y_0^m, y_j/y_{j-1}^t: 1\le j\le n-1\rangle$.
In other words, $k(y_0^m,y_j/y_{j-1}^t:1\le j\le n-1)\simeq k(M)$ and
$k(G)=k(y_j: 0\le j\le n-1)^G(z_1,\ldots,z_{mn-n})\simeq k(M)^{\langle\tau\rangle}(z_1,\ldots,z_{mn-n})$.

The $\pi$-lattice $M$ is called the Masuda's ideal in \cite[page 14]{EM1}. We will show that it is a projective ideal of $\bm{Z}\pi$ in Step 4, i.e. a left ideal of of $\bm{Z}\pi$ which is also a $\bm{Z}\pi$-projective module.

\medskip
Step 2.
Let $M'=\bigoplus_{0\le j\le n-1} \bm{Z}\cdot v_j$ be the $\pi$-lattice defined by
$\tau: v_0\mapsto v_1 \mapsto \cdots \mapsto v_{n-1}\mapsto v_0$,
i.e.\ $M'\simeq \bm{Z}\pi$.
Consider $k(M\oplus M')^{\langle \tau\rangle}$.

Since $\pi=\langle\tau\rangle$ acts faithfully on $k(M)$ and $k(M\oplus M')=k(M)(v_j:0\le j \le n-1)$,
it follows that $k(M\oplus M')^{\langle \tau\rangle}=k(M)^{\langle \tau\rangle}(w_1,\ldots,w_n)$
where $\tau\cdot w_j=w_j$ for all $1\le j\le n$ by Theorem \ref{t2.6}.
It follows that $k(G)\simeq k(M\oplus M')^{\langle\tau\rangle}(z'_1,z'_2,\ldots,z'_{mn-2n})$.

We will show that $k(M\oplus M')^{\langle\tau\rangle}$ is rational over $k$.
Once this is finished, we find that $k(G)$ is rational over $k$.

\medskip
Step 3.
Regard $k(M\oplus M')^{\langle\tau\rangle}=k(M')(M)^{\langle\tau\rangle}$.
Write $K:=k(M')$, $k_0=k(M')^{\langle \tau\rangle}$.
Then $K/k_0$ is a finite Galois extension with $\fn{Gal}(K/k_0)=\pi$.
Moreover, $k_0=k(M')^{\langle\tau\rangle}$ is rational over $k$ by Theorem \ref{t1.5}
since $\zeta_n\in k$ and $\pi=\langle\tau\rangle$ is abelian.
If we show that $K(M)^{\langle\tau\rangle}=k(M')(M)^{\langle\tau\rangle}$ is rational over $k_0$,
then $k(M\oplus M')^{\langle\tau\rangle}$ is rational over $k$.

\medskip
Step 4.
From the definition of $M$, i.e.\ Formula \eqref{eq4.1}, it is clear that $[\bm{Z}\pi:M]=m$.
Since $\gcd\{m,n\}=1$, it follows that $M$ is a projective $\bm{Z}\pi$-module by \cite[Proposition 7.1; Ka2, Theorem 3.9]{Sw1}. Hence we may apply Theorem \ref{t4.1} to $M$.

We remark that this is the only situation in which the assumption $\gcd\{m,n\}=1$ is used.

\medskip
Note that, since $M$ is $\bm{Z}\pi$-projective, it follows that $M/\Phi_e (\tau)M=\bm{Z}\pi/\Phi_e(\tau)\otimes M$ is torsion-free,
i.e.\ $M/\Phi_e(\tau)M=(M/\Phi_e(\tau)M)_0$ in the notation of Theorem \ref{t4.2}.

Since $M$ is a sublattice of $\bm{Z}\pi$ with $\bm{Z}\pi/M$ torsion,
we may use \cite[Proposition 2.2]{Le} to evaluate $M/\Phi_e(\tau)M$.
We find that, $M/\Phi_e(\tau)M$ is the image of $M=\langle \tau-t,m\rangle$ in $\bm{Z}\pi/\Phi_e(\tau)\cdot \bm{Z}\pi\simeq \bm{Z}[\zeta_e]$.
We find that $M/\Phi_e(\tau)M\simeq \langle \zeta_e-t,m\rangle$.

By assumption, for all $e\mid n$, $\langle\zeta_e-t,m\rangle$ is a principal ideal,
i.e.\ the class of $\langle \zeta_e-t,m\rangle$ in $C(\bm{Z}[\zeta_e])$ is the zero class.
By Theorem \ref{t4.2},
we obtain that $c([M])=0$ and thus $[M]=0$ in $F_\pi$.

In conclusion, $M$ is a projective ideal of $\bm{Z}\pi$ and $[M]=0$ in $F_{\pi}$
(equivalently, there is permutation $\pi$-lattices $Q_1$ and $Q_2$ such that $M\oplus Q_1\simeq Q_2$).

\medskip
Step 5.
Since $M$ is $\bm{Z}\pi$-projective,
there is a projective $\bm{Z}\pi$-module $P$ such that $M\oplus P\simeq \bm{Z}\pi^{(l)}$ for some integer $l$.
It follows that $0\to M\to \bm{Z}\pi^{(l)}\to P\to 0$ is a flabby resolution.
Thus $[M]^{fl}=[P]$.

From $M\oplus P\simeq \bm{Z}\pi^{(l)}$ and $M\oplus Q_1\simeq Q_2$,
we get $Q_2\oplus P\simeq Q_1\oplus M\oplus P\simeq Q_1\oplus \bm{Z}\pi^{(l)}$.
Hence $[M]^{fl}=[P]=0$ in $F_\pi$.

By Theorem \ref{t2.5}, $K(M)^{\langle\tau\rangle}$ is stably rational over $k_0$ (remember that the definitions of $K$ and $k_0$ in Step 3).
Apply Theorem \ref{t4.1}.
We obtain that $K(M)^{\langle\tau\rangle}$ is rational over $k_0$ because $M$ is $\bm{Z}\pi$-projective.
\end{proof}

\begin{remark}
In the above theorem, the assumption $\zeta_m, \zeta_n \in k$ may be replaced by $\zeta_m \in k$ and $k(C_n)$ is rational over $k$, because we may apply Theorem \ref{t1.5}.
\end{remark}

\begin{lemma} \label{l4.12}
Let $\pi$ be a cyclic group of order $n$ and $M$ be a $\pi$-lattice satisfying that $[M]^{fl}=0$. If $k$ is a field with $\zeta_n\in k$, then $k(M)^{\pi}$ is stably rational over $k$.
\end{lemma}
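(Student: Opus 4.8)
The plan is to transport the flabby-class criterion of Theorem \ref{t2.5}, which is stated for the Galois-twisted action, to the purely monomial action $k(M)^{\pi}$ of the statement; this is precisely the stable-rationality content already present in Steps 2 and 3 of the proof of Theorem \ref{t4.3}, stripped of the projectivity used there to upgrade to full rationality. The device is to adjoin a copy of the regular representation: set $M':=\bm{Z}\pi$, the free $\pi$-lattice of rank $n$ on a basis cyclically permuted by a generator $\tau$ of $\pi$, and to study $k(M\oplus M')^{\pi}=k(M')(M)^{\pi}$ in place of $k(M)^{\pi}$. Since $M'$ is a permutation lattice, $[M\oplus M']^{fl}=[M]^{fl}+[M']^{fl}=[M]^{fl}=0$, so nothing is lost at the level of flabby classes.

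First I would record that $K:=k(M')$ carries a faithful $\pi$-action with $K^{\pi}=k_{0}$, so that $K/k_{0}$ is a finite Galois extension with group $\pi$; moreover $k_{0}=k(M')^{\pi}=k(C_{n})$ is rational over $k$ by Fischer's Theorem \ref{t1.5}, since $\pi$ is abelian and $\zeta_{n}\in k$. Under this reading $k(M\oplus M')^{\pi}=K(M)^{\pi}$ is exactly the Galois-twisted multiplicative invariant field of Theorem \ref{t2.5}, so the hypothesis $[M]^{fl}=0$ gives at once that $K(M)^{\pi}$ is stably rational over $k_{0}$, hence (as $k_{0}$ is rational over $k$) stably rational over $k$. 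To return to $k(M)^{\pi}$, I would regard $k(M\oplus M')=k(M)(v_{0},\dots,v_{n-1})$ with $\tau$ permuting the variables $v_{i}$ cyclically, and apply the No-Name Lemma (Theorem \ref{t2.6}) with base field $L=k(M)$ and this linear permutation action, obtaining $k(M\oplus M')^{\pi}=k(M)^{\pi}(w_{1},\dots,w_{n})$ for suitable $\pi$-invariant $w_{j}$. Thus $k(M)^{\pi}$ adjoined $n$ independent variables is stably rational over $k$, which forces $k(M)^{\pi}$ itself to be stably rational over $k$.

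The one place that needs care, and the main obstacle, is hypothesis (ii) of Theorem \ref{t2.6}: the No-Name Lemma can peel the permutation variables $v_{i}$ off the base $L=k(M)$ only when $\pi$ acts faithfully on $k(M)$, equivalently when $M$ is a faithful $\pi$-lattice, whereas a general $M$ need not be faithful. I would dispose of this by reducing to the faithful quotient. Let $\pi_{0}=\ker(\pi\to GL(M))$ and $\bar{\pi}=\pi/\pi_{0}\simeq C_{d}$ with $d\mid n$; then $\pi$ acts on $k(M)$ through $\bar{\pi}$, so $k(M)^{\pi}=k(M)^{\bar{\pi}}$, and $M$ is a faithful $\bar{\pi}$-lattice with $\zeta_{d}\in k$. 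It remains to check that the hypothesis descends, i.e.\ that $[M]^{fl}=0$ in $F_{\bar{\pi}}$. For this I would take $\pi_{0}$-invariants of a flabby resolution $0\to M\to Q\to E\to 0$ over $\pi$ with $E$ stably permutation: since $\pi_{0}$ acts trivially on the torsion-free module $M$ one has $H^{1}(\pi_{0},M)=0$, so $0\to M\to Q^{\pi_{0}}\to E^{\pi_{0}}\to 0$ remains exact; and because $\pi$ is abelian, the $\pi_{0}$-invariants of a permutation $\pi$-lattice are again a permutation $\bar{\pi}$-lattice (the $\pi_{0}$-orbit sums give the permuted basis). Hence $Q^{\pi_{0}}$ is permutation and, taking invariants of an isomorphism $E\oplus S\simeq T$ with $S,T$ permutation, $E^{\pi_{0}}$ is stably permutation over $\bar{\pi}$; this is a flabby resolution over $\bar{\pi}$ with $[M]^{fl}=[E^{\pi_{0}}]=0$ in $F_{\bar{\pi}}$. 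Running the argument of the second paragraph over $\bar{\pi}$, where $M$ is now faithful, then completes the proof.
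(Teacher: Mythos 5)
Your proposal is correct and follows essentially the same route as the paper: reduce to the faithful quotient $\bar{\pi}=\pi/\pi_0$, adjoin the regular lattice of that quotient, and play Theorem \ref{t2.6} (to split off the permutation variables over $k(M)^{\pi}$) against Theorem \ref{t2.5} (to get stable rationality over $k(M')^{\bar{\pi}}$, which is rational by Fischer). The only difference is that where the paper cites \cite[page 180, Lemma 2]{CTS} for the descent of $[M]^{fl}=0$ to $F_{\bar{\pi}}$, you prove it directly by taking $\pi_0$-invariants of a flabby resolution, which is a valid (and self-contained) substitute.
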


\begin{proof}
Step 1. Define $\pi^{\prime}=\{\lambda \in \pi: \lambda$ acts trivially on $M \}$. Define $\pi^{\prime \prime}=\pi/\pi^{\prime}$. Then $M$ is a faithful $\pi^{\prime \prime}$-lattice. As a $\pi$-lattice, $[M]^{fl}=0$. It follows that, as a $\pi^{\prime \prime}$-lattice, we also have $[M]^{fl}=0$ by \cite[page 180, Lemma 2]{CTS}.

\medskip
Step 2. We use the ideas of Step 2 and Step 3 in the proof of Theorem \ref{t4.3}.

Define $M^{\prime}=\bm{Z}\pi^{\prime \prime}$. Consider the fixed field $k(M \oplus M^{\prime})^{\pi} (=k(M \oplus M)^{\pi^{\prime \prime}})$. Since $M$ is a faithful $\pi^{\prime \prime}$-lattice, we may apply Theorem \ref{t2.6}. We get $k(M \oplus M^{\prime})^{\pi}=k(M)^{\pi}(w_1, \ldots, w_e)$ where $e= |\pi^{\prime \prime}|$.

On the other hand, since $[M]^{fl}=0$ as a $\pi^{\prime \prime}$-lattice, we may apply Theorem \ref{t2.5}. It follows that $k(M \oplus M^{\prime})^{\pi}$ is stably rational over $k(M^{\prime})^{\pi^{\prime \prime}}$. Since $\zeta_n \in k$, clearly $\zeta_e \in k$ (remember that $e= |\pi^{\prime \prime}|$). Hence $k(M^{\prime})^{\pi^{\prime \prime}}$ is rational over $k$ by Theorem \ref{t1.5}. In conclusion, $k(M \oplus M^{\prime})^{\pi}$ is stably rational over $k$. Thus we find that $k(M)^{\pi}$ is also stably rational over $k$.
\end{proof}

\begin{remark}
In the above lemma, the condition $[M]^{fl}=0$ is a mild restriction. In fact, it may happen that $M$ may be a projective $\bm{Z}\pi$-module which is not stably free and is not a permutation lattice, while $[M]=0$ (equivalently, $[M]^{fl}=0$). Here is such an example taken from \cite[page 18, line -17]{EM1}.

Let $\pi=\langle \tau\rangle \simeq C_{12}$,
$t$ be an integer such that $t\in (\bm{Z}/13\bm{Z})^\times$ is of order 12, i.e. a primitive root of $(\bm{Z}/13\bm{Z})^\times$.
Then $M=\langle \tau-t,13\rangle \subset \bm{Z}\pi$ is a projective $\bm{Z}\pi$-module by \cite[Proposition 2.6]{EM1}. $M$ is a projective ideal of $\bm{Z}\pi$.

Clearly $M$ is not a free module, i.e. $M$ is not a principal ideal of $\bm{Z}\pi$. Obviously it is not a permutation $\pi$-lattice. We claim that $M$ is not a stably free $\bm{Z}\pi$-module. Otherwise, we have $M \oplus \bm{Z}\pi^{(s)} \simeq \bm{Z}\pi^{(s+1)}$ for some positive integer $s$. Taking the the determinant of both sides, we find that $M$ is a free module, which is a contradiction.

However, the class $[M]\in F_\pi$ is the zero class because $c([M])=0$ by Theorem \ref{t4.2}
(by Theorem \ref{t1.2} $\bm{Z}[\zeta_e]$ is a UFD for all divisors $e$ of $12$).
It follows that $M$ is stably permutation.
\end{remark}

\begin{lemma} \label{l4.4}
Let $m$, $n$ be positive integers.
Assume that (i) $m=p^d$ ($p\ge 3$ is a prime number, $d\ge 1$),
$t$ is an integer such that $t\in (\bm{Z}/m\bm{Z})^\times$ is of order $n$,
and (ii) for any $e\mid n$,
the ideal $\langle \zeta_e-t,m\rangle$ in $\bm{Z}[\zeta_e]$ is a principal ideal.
If $k$ is a field with $\zeta_m,\zeta_n\in k$, then $k(G_{m,n})$ is rational over $k$.
\end{lemma}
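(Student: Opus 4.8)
The plan is to follow the proof of Theorem \ref{t4.3} as far as it goes and to replace its single use of $\gcd\{m,n\}=1$ --- the projectivity of Masuda's ideal --- by a flabby-class computation, paying for this at the end with an upgrade from stable to genuine rationality. Step~1 of that proof never used coprimality, so I would repeat it verbatim: writing $G=G_{m,n}=\langle\sigma,\tau\rangle$, $\pi=\langle\tau\rangle\simeq C_n$, and setting $X=\sum_{0\le i\le m-1}\zeta_m^{-i}x(\sigma^i)$, $y_j=\tau^j\cdot X$, one has $\sigma\cdot y_j=\zeta_m^{t^j}y_j$ while $\tau$ cyclically permutes $y_0,\dots,y_{n-1}$. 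The No-Name Lemma (Theorem \ref{t2.6}) then yields $k(G)\simeq k(M)^{\pi}(z_1,\dots,z_{n(m-1)})$, where $M=\langle\tau-t,m\rangle\subset\bm{Z}\pi$ is the ideal of index $m$ and the $z_i$ are algebraically independent over $k(M)^{\pi}$. Everything is thus reduced to $k(M)^{\pi}$, with the bonus that $n(m-1)$ free variables are available to spend.

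Next I would show $[M]^{fl}=0$ in $F_{\pi}$, exactly as in Step~4 of the proof of Theorem \ref{t4.3}. For each $e\mid n$, $M/\Phi_e(\tau)M$ is computed inside $\bm{Z}\pi/\Phi_e(\tau)\bm{Z}\pi\simeq\bm{Z}[\zeta_e]$, and by \cite[Proposition 2.2]{Le} its torsion-free quotient $(M/\Phi_e(\tau)M)_0$ is the ideal $\langle\zeta_e-t,m\rangle$; since $m=p^d$, Lemma \ref{l3.3} describes this ideal through the single prime $\langle\zeta_e-t,p\rangle$ above $p$. The sole new feature compared with Theorem \ref{t4.3} is that, with $p\mid n$ now allowed, $M$ need no longer be projective and $M/\Phi_e(\tau)M$ may carry torsion --- but that torsion is precisely what passing to $(\,\cdot\,)_0$ discards. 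Hypothesis (ii) makes each $\langle\zeta_e-t,m\rangle$ principal, so every $e$-component of $c([M]^{fl})$ vanishes in $C(\bm{Z}[\zeta_e])$; as $c$ is injective (Theorem \ref{t4.2}), $[M]^{fl}=0$. Since $\zeta_n\in k$, Lemma \ref{l4.12} then gives that $k(M)^{\pi}$ is stably rational over $k$.

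It remains to promote stable rationality to genuine rationality, and I expect this to be the crux, since the projectivity of $M$ that powered Theorem \ref{t4.1} in the coprime case of Theorem \ref{t4.3} is unavailable once $p\mid n$ (so that $M$ is in general neither projective nor stably permutation). Stable rationality means that $k(M)^{\pi}(y_1,\dots,y_s)$ is rational over $k$ for some $s$, and, because the reduction already gives $k(G)\simeq k(M)^{\pi}(z_1,\dots,z_{n(m-1)})$, it suffices to arrange $s\le n(m-1)$: then the first $s$ of the algebraically independent $z_i$ already make the field rational and the remaining ones preserve rationality. The stabilization $s$ is governed by a flabby resolution $0\to M\to Q\to E\to0$ together with a presentation $E\oplus P_1\simeq P_2$ of its flabby part $E$, the relevant count being essentially $(\fn{rank}Q-\fn{rank}M)+\fn{rank}P_1$, so the real task is to produce a sufficiently economical such resolution. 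Here I would exploit the prime-power structure: the chain $M=\langle\tau-t,p^d\rangle\subset\langle\tau-t,p^{d-1}\rangle\subset\cdots\subset\langle\tau-t,p\rangle\subset\bm{Z}\pi$ has successive quotients isomorphic to $\bm{Z}/p$, and the principal generators of the ideals $\langle\zeta_e-t,m\rangle$ furnished by (ii) should let me trivialise the flabby obstruction one $\bm{Z}/p$-layer at a time, keeping the permutation lattices $Q,P_1,P_2$ small enough that $s\le n(m-1)$. Bounding the resolution in this way is the principal difficulty of the proof.
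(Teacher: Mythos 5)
There is a genuine gap, and it stems from a false premise. You assert that once $p\mid n$ is allowed ``$M$ need no longer be projective'' and that therefore Theorem \ref{t4.1} is unavailable, so you must upgrade stable rationality to rationality by hand. In fact the Masuda ideal $M=\langle\tau-t,p^d\rangle\subset\bm{Z}\pi$ \emph{is} projective for every odd prime $p$, whether or not $p\mid n$: this is \cite[Proposition 2.6]{EM1}, proved by checking that $\bm{Z}\pi/M\simeq\bm{Z}/p^d$ (with $\tau$ acting as multiplication by $t$) is cohomologically trivial and then invoking Rim's theorem \cite[Theorems 4.11--4.12]{Ri}. The paper's entire proof of Lemma \ref{l4.4} consists of this one observation, after which Steps 4 and 5 of the proof of Theorem \ref{t4.3} --- including the application of Theorem \ref{t4.1} to pass from stable rationality to rationality --- go through verbatim. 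Your substitute for that final passage, namely constructing a flabby resolution economical enough that the stabilization $s$ satisfies $s\le n(m-1)$ and can be absorbed by the No-Name variables, is only sketched (``trivialise the flabby obstruction one $\bm{Z}/p$-layer at a time''), and you yourself flag it as the principal difficulty. As written this is not a proof: controlling the number of stabilizing variables in a stable isomorphism is exactly the kind of quantitative information that the similarity-class formalism discards, and nothing in hypothesis (ii) obviously yields such a bound.

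A secondary issue: in your second paragraph you compute the $e$-components $[(M/\Phi_e(\tau)M)_0]=[\langle\zeta_e-t,m\rangle]$ and conclude that ``every $e$-component of $c([M]^{fl})$ vanishes.'' But the isomorphism $c$ of Theorem \ref{t4.2} is defined on \emph{flabby} lattices; applying it to $M$ itself (rather than to the flabby part $E$ of a resolution $0\to M\to Q\to E\to 0$) requires knowing that $M$ is flabby, which in the paper follows from the very projectivity you have set aside. Once projectivity is restored, both this step and the final upgrade are immediate, and your detour is unnecessary.
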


\begin{proof}
Write $G_{m,n}=\langle\sigma,\tau:\sigma^m=\tau^n=1,\tau^{-1}\sigma\tau=\sigma^t\rangle$.

All the assumptions and the conclusion in this lemma are the same as those in Theorem \ref{t4.3},
except that we don't assume that $\gcd\{m,n\}=1$ and replace it by $m=p^d$.

The proof of Theorem \ref{t4.3} remains valid till Step 3.
We will also show that the ideal $M=\langle \tau-t,m\rangle\subset \bm{Z}\pi$ is a projective $\bm{Z}\pi$-module.

The fact that $M$ is $\bm{Z}\pi$-projective follows from \cite[Proposition 2.6]{EM1}. In fact, it is the Masuda's ideal mentioned in Step 1 of the proof of Theorem \ref{t4.3}. The ideal $M$ is denoted by $M_V$ (and also by $I_k(p^{l_j})$) in \cite[page 14--15]{EM1}.

For the convenience of the reader, we explain briefly the main idea of the proof \cite[Proposition 2.6]{EM1}. Using our notation, consider the module $\bm{Z}\pi/M$. By a direct verification, we show that $\bm{Z}\pi/M$ is cohomologically trivial \cite[Theorem 4.12]{Ri} (the verification is straightforward). Hence $M$ is also cohomologically trivial. Then we may apply \cite[Theorem 4.11]{Ri} to conclude that $M$ is projective.

Once we know that $M$ is $\bm{Z}\pi$-projective, the remaining proof of Theorem \ref{t4.3} works as before.
\end{proof}

\begin{proof}[Proof of Theorem \ref{t1.6}] --------------------~\par

Let $G:=G_{m,n}=\langle\sigma,\tau:\sigma^m=\tau^n=1,\tau^{-1}\sigma\tau=\sigma^t\rangle$.

Write $m=\prod_{1\le i\le r} p_i^{d_i}$ where $p_1,\ldots,p_r$ are distinct prime number and $d_i\ge 1$.

Define $m_i:=m/p_i^{d_i}$ for $1\le i\le r$;
and define $\sigma_i=\sigma^{m_i}$, $H_i=\langle \sigma_j:j\ne i\rangle$.
Then $\langle \sigma\rangle=\langle \sigma_i\rangle \times H_i$, and $\langle\sigma\rangle=\langle\sigma_1,\ldots,\sigma_r\rangle$.

\medskip
Let $V^*=\bigoplus_{g\in G} k\cdot x(g)$ be the same as in Step 1 of the proof of Theorem \ref{t4.3}.
For $1\le j\le r$, define
\[
X_j=\sum_{g\in H_j}\sum_{0\le l\le p_j^{d_j}-1} \zeta_{p_j^{d_i}}^{-l} x(g\sigma_j^l)\in V^*,
~ y_l^{(j)}=\tau^l X_j \quad\text{for }0\le l\le n-1.
\]

It is routine to check that
\begin{align*}
\sigma_i:{}& X_i \mapsto \zeta_{p_i^{d_i}} X_i,~X_j\mapsto X_j \quad\text{if } j\ne i, \\
& y_l^{(i)} \mapsto \zeta_{p_i^{d_i}}^{t^l} \cdot y_l^{(i)},~y_l^{(j)}\mapsto y_l^{(j)} \quad\text{if } j\ne i \\
\tau:{} & y_0^{(i)}\mapsto y_1^{(i)}\mapsto \cdots\mapsto y_{n-1}^{(i)} \mapsto y_0^{(i)} \quad\text{for }1\le i\le r.
\end{align*}

By Theorem \ref{t2.6}, $k(G)$ is rational over $k(y_l^{(i)}:1\le i\le r, 0\le l\le n-1)^G$.
Moreover, $k(y_l^{(i)}:1\le i\le r,0\le l\le n-1)^{\langle\sigma\rangle}=
k\big((y_0^{(i)})^{p_i^{d_i}}, y_l^{(i)}/(y_{l-1}^{(i)})^t:1\le i\le r,1\le l\le n-1 \big)$.

\medskip
Write $\pi=\langle\tau\rangle$.
Define a $\pi$-lattice $M_i$ of $\bm{Z}\pi$ by
\begin{equation}
M_i=\langle \tau-t,p_i^{d_i}\rangle \quad\text{for } 1\le i\le r. \label{eq4.2}
\end{equation}

Since each $p_i$ is odd, it follows that $M_i$ is $\bm{Z}\pi$-projective by \cite[Proposition 2.6]{EM1} (see the proof of Lemma \ref{l4.4}).
Note that $k(y_l^{(i)}:1\le i\le r,0\le l\le n-1)^G=k(M_1\oplus\cdots\oplus M_r)^{\langle\tau\rangle}$.

For any $e\mid n$, $\langle\zeta_e-t,m\rangle=\prod_{1\le i\le r} \langle\zeta_e-t,p_i^{d_i}\rangle=\prod_{1\le i\le r} [M_i/\Phi_e(\tau)M_i]$
is the $e$-th component of $c(M)$ where $M=\bigoplus_{1\le i\le r}M_i$ and $c:F_\pi \to \bigoplus_{e\mid n} C(\bm{Z}[\zeta_e])$
is the isomorphism defined in Theorem \ref{t4.2}.

By assumption $\langle \zeta_e-t,m\rangle$ is a principal ideal.
Hence $c(M)=0$ and thus $[M]=0$ in $F_\pi$ by Theorem \ref{t4.2}.
The remaining part of the proof is the same as in Theorem \ref{t4.3}.
Done.
\end{proof}

In Theorem \ref{t4.6} we will give a generalization of Theorem \ref{t1.4} using the method of Theorem \ref{t4.3}.

Recall the assumptions in Theorem \ref{t1.4}: $q$ is a prime number, $t\in (\bm{Z}/m\bm{Z})^\times$ is of order $q$,
$G_{m,n}=\langle\sigma,\tau:\sigma^m=\tau^q=1,\tau^{-1}\sigma\tau=\sigma^t\rangle$,
and $m'=m/\gcd\{m,t-1\}$.

Write $m=q^{d_0}\prod_{1\le i\le s} p_i^{d_i}\cdot \prod_{1\le j\le s'} q_j^{e_j}$ where $d_0=\fn{ord}_q(m)\ge 0$, $d_i,e_j\ge 1$,
$q$, $p_i$, $q_j$ are distinct prime numbers such that $p_i\nmid t-1$ for $1\le i\le s$,
and $q_j\mid t-1$ for $1\le j\le s'$.

Define $m''=\prod_{1\le i\le s}p_i^{d_i}$. Define $m_1$ and $m_2$ by the formula $m=m_1m_2$ and $m_2$ is defined by
\begin{equation}
m_2=\begin{cases}
q^{d_0}m'' & \text{if }q\mid m', \\
m'' & \text{if } q\nmid m'.
\end{cases} \label{eq4.3}
\end{equation}

\begin{lemma} \label{l4.5}
Let $m'$, $m''$, $m_1$ and $m_2$ be defined as above.
Then
\[
m'=\begin{cases}
qm'' & \text{if } 1\le \fn{ord}_q (t-1)<\fn{ord}_q(m), \\ m'' & \text{otherwise}.
\end{cases}
\]

In fact, $q\mid m'$ if and only if $\fn{ord}_q(t-1) =d_0-1\ge 1$ with $\fn{ord}_q(m_2)=d_0$.
\end{lemma}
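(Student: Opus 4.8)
The plan is to compute the $p$-adic valuation $\fn{ord}_p(m')$ prime by prime, using the elementary identity
\[
\fn{ord}_p(m')=\fn{ord}_p(m)-\min\{\fn{ord}_p(m),\fn{ord}_p(t-1)\}=\max\{0,\fn{ord}_p(m)-\fn{ord}_p(t-1)\},
\]
which is immediate from $m'=m/\gcd\{m,t-1\}$. Everything then reduces to pinning down $\fn{ord}_p(t-1)$ for each prime $p\mid m$, and for this the hypothesis that $t$ has order $q$ in $(\bm{Z}/m\bm{Z})^\times$ is the essential input. For $p=p_i$ one has $\fn{ord}_{p_i}(t-1)=0$ by the very choice of the $p_i$ (namely $p_i\nmid t-1$), so $\fn{ord}_{p_i}(m')=d_i$; these primes account precisely for the factor $m''$. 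For $p=q_j$ (where $q_j\neq q$ and $q_j\mid t-1$) I would argue that, since $t\equiv 1\pmod{q_j}$, the order of $t$ modulo $q_j^{e_j}$ is a power of $q_j$; by the Chinese Remainder Theorem this order also divides $q$, and as $q_j\neq q$ it must be $1$, i.e.\ $q_j^{e_j}\mid t-1$. Hence $\fn{ord}_{q_j}(m')=0$ and the primes $q_j$ contribute nothing to $m'$.

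The crux is the prime $q$ itself, and this is the step I expect to be the main obstacle. First, if $q\mid m$ (that is, $d_0\ge 1$), reducing modulo $q$ shows the order of $t$ modulo $q$ divides $\gcd\{q,q-1\}=1$, so $t\equiv 1\pmod q$ and $a:=\fn{ord}_q(t-1)\ge 1$. The order of $t$ modulo $q^{d_0}$ is again $1$ or $q$. If it is $1$ then $t\equiv 1\pmod{q^{d_0}}$, so $a\ge d_0$ and $\fn{ord}_q(m')=0$. If it is $q$, I would determine $a$ exactly via the lifting-the-exponent relation $\fn{ord}_q(t^q-1)=\fn{ord}_q(t-1)+1$, which is the special case $p=q$, $n=1$ of Lemma \ref{l3.1}(1) for $q\ge 3$ (the case $q=2$ being handled directly). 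Then $t^q\equiv 1\pmod{q^{d_0}}$ gives $a+1\ge d_0$, while $t\not\equiv 1\pmod{q^{d_0}}$ gives $a\le d_0-1$, forcing $a=d_0-1$ (so in particular $d_0\ge 2$) and $\fn{ord}_q(m')=d_0-(d_0-1)=1$. The key point extracted here is the clean dichotomy $\fn{ord}_q(m')\in\{0,1\}$, together with the fact that under the order hypothesis the intermediate range $1\le a\le d_0-2$ cannot occur at all.

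Assembling the local contributions yields $m'=m''\cdot q^{\fn{ord}_q(m')}$ with $\fn{ord}_q(m')\in\{0,1\}$, which is exactly the asserted formula: $m'=qm''$ precisely when $\fn{ord}_q(m')=1$, i.e.\ when $a=d_0-1\ge 1$, and $m'=m''$ otherwise. To match the stated condition $1\le\fn{ord}_q(t-1)<\fn{ord}_q(m)$ I would invoke the observation of the second paragraph that, given the order hypothesis, $1\le a<d_0$ is equivalent to $a=d_0-1\ge 1$. For the final sentence, $q\mid m'\Leftrightarrow\fn{ord}_q(m')=1\Leftrightarrow a=d_0-1\ge 1$; and in that case $m_2=q^{d_0}m''$ by \eqref{eq4.3}, so $\fn{ord}_q(m_2)=d_0$ since $q\nmid m''$. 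Apart from the prime-$q$ analysis, where one must rule out $a\le d_0-2$, the argument is routine bookkeeping with $p$-adic valuations and the CRT decomposition of the order of $t$.
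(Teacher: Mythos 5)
Your proof is correct and follows essentially the same route as the paper's: both compute the valuation of $m'$ prime by prime, with the crux being that the order hypothesis forces $\fn{ord}_q(t-1)=d_0-1$ whenever $1\le\fn{ord}_q(t-1)<d_0$, and that $q_j^{e_j}\mid t-1$ for the primes $q_j$. The differences are only in technique (your kernel-of-reduction/CRT argument for the $q_j$ and your appeal to Lemma \ref{l3.1}(1) at the prime $q$ replace the paper's direct binomial expansions of $(1+aq^{d'})^q$), and both arguments genuinely require $q$ odd --- for $q=2$ the statement can fail (e.g.\ $m=8$, $t=3$ gives $m'=4$, not $qm''=2$) --- a restriction the paper imposes only in Step 1 of the proof of Theorem \ref{t4.6}, so your parenthetical claim that $q=2$ is ``handled directly'' is the one overstatement.
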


\begin{proof}
If $q\nmid m$, then $q\nmid m'$.
Thus we will consider the situation $q\mid m$ in the sequel.
We will show that, if $1\le \fn{ord}_q (t-1)<\fn{ord}_q(m)$, then $\fn{ord}_q(t-1)=\fn{ord}_q(m)-1$.

Since $q\mid m$, it follows that $t^q\equiv 1 \pmod{q^{d_0}}$ where $d_0=\exp_q(m)$. From $t\equiv t^q {\pmod{q}}$, we get $q\mid t-1$.

If $\fn{ord}_q (t-1)\ge \fn{ord}_q(m)$, then $q\nmid m'$ and $q\nmid m_2$.
It remains to consider the situation $1\le \fn{ord}_q (t-1)<\fn{ord}_q(m)$.

Write $t=1+ag^{d'}$ and $m=bq^{d_0}$ where $1\le d' <d_0$ and $q\nmid ab$.
From $t^q\equiv 1 \pmod{q^{d_0}}$,
we find that $t^q-1=(1+aq^{d'})^q-1=aq^{d'+1}+cq^{d'+2}$ for some integer $c$.
Thus $d'+1=d_0$ as we expected.
Clearly $\fn{ord}_q(m')=1$ and $\fn{ord}_q(m_2)=d_0$.

\medskip
Suppose that $q_j\ne q$ is a prime divisor of $m$ with $q_j\mid t-1$.
Write $t=1+aq_j^{d'}$, $m=bq_j^{e_j}$ where $d',e_j\ge 1$ and $q_j\nmid ab$.
We claim that $d'\ge e_j$.
Otherwise, $d'\le e_j-1$.
Then $q_j^{e_j}$ will not divide $t^q-1=(1+aq_j^{d'})^q-1=aqq_j^{d'}+cq_j^{2d'}$ where $c$ is some integer.
We conclude that $d'\ge e_j$ and $q_j\nmid m'$.
\end{proof}

The following theorem is slightly different from Theorem \ref{t1.4}. In fact, we don't require that $\gcd\{a_0,a_1,\ldots,a_{q-2},b\}=1$ and the positivity condition of the norm in Theorem \ref{t1.4} is waived.

\begin{theorem} \label{t4.6}
Let $m$ and $q$ be positive integers where $q$ is a prime number and assume that there is an integer $t$ such that $t\in (\bm{Z}/m\bm{Z})^\times$ is of order $q$.
Define $m'=m/\gcd\{m,t-1\}$.
Assume that there exist integers $a_0,a_1,\ldots,a_{q-2},b$ such that $bm'=a_0+a_1t+\cdots+a_{q-2}t^{q-2}$ and
$N_{\bm{Q}(\zeta_q)/\bm{Q}} (\alpha)=\pm m'$ where $\alpha:=a_0+a_1\zeta_q+\cdots+a_{q-2}\zeta_q^{q-2}$.
If $k$ is a field with $\zeta_m, \zeta_q\in k$,
then $k(G_{m,n})$ is rational over $k$.
\end{theorem}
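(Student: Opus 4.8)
The plan is to run the reduction from the proof of Theorem \ref{t4.3}, in the several–variable form used for Theorem \ref{t1.6}, but to split the cyclic group $\langle\sigma\rangle$ along the coprime factorization $m=m_1m_2$ of Lemma \ref{l4.5}. Since $m_1\mid t-1$ by Lemma \ref{l4.5}, writing $\sigma_{(1)}=\sigma^{m_2}$ (of order $m_1$) and $\sigma_{(2)}=\sigma^{m_1}$ (of order $m_2$), the element $\tau$ centralises $\sigma_{(1)}$, so $\langle\sigma_{(1)}\rangle$ is central while $t$ still has order $q$ modulo $m_2$. First I would form the semi-invariants $X^{(i)}=\sum_{g\in\langle\sigma_{(3-i)}\rangle}\sum_{0\le l\le m_i-1}\zeta_{m_i}^{-l}\,x(g\sigma_{(i)}^{l})$ and $y^{(i)}_l=\tau^lX^{(i)}$, exactly as in the proofs of Theorem \ref{t4.3} and Theorem \ref{t1.6}. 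A direct computation gives $\sigma_{(1)}\colon y^{(1)}_l\mapsto\zeta_{m_1}y^{(1)}_l$ for every $l$ (this is where $t^l\equiv1\pmod{m_1}$ enters), $\sigma_{(2)}\colon y^{(2)}_l\mapsto\zeta_{m_2}^{t^l}y^{(2)}_l$, while $\tau$ cyclically permutes each family. By Theorem \ref{t2.6}, $k(G_{m,q})$ is rational over $k(y^{(i)}_l)^{G_{m,q}}$; here $\langle\sigma\rangle$ is normal with $G_{m,q}/\langle\sigma\rangle=\pi:=\langle\tau\rangle\simeq C_q$, the $\langle\sigma\rangle$-invariants give the multiplicative-invariant field $k(y^{(i)}_l)^{\langle\sigma\rangle}=k(M_1\oplus M_2)$ of the $\pi$-lattices $M_i=\langle\tau-t,m_i\rangle\subset\bm{Z}\pi$ (cf.\ \eqref{eq4.1}), and therefore $k(G_{m,q})$ is rational over $k(M_1\oplus M_2)^\pi$. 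Note that the $\langle\sigma_{(1)}\rangle$-invariants of the first family give precisely $M_1=\langle\tau-1,m_1\rangle$ because $t\equiv1\pmod{m_1}$.

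Next I would write $k(M_1\oplus M_2)^\pi=L(M_2)^\pi$ with $L:=k(M_1)$, on which $\pi$ acts faithfully, and handle the two lattices separately. For $M_1$ the key observation is that $L^\pi=k(M_1)^\pi$ coincides with $k(y^{(1)}_l)^{\langle\sigma_{(1)},\tau\rangle}$, and the group $\langle\sigma_{(1)},\tau\rangle\simeq C_{m_1}\times C_q$ is abelian; since $\zeta_m,\zeta_q\in k$ gives $\zeta_{\operatorname{lcm}(m_1,q)}\in k$, Fischer's Theorem \ref{t1.5} shows that $k(M_1)^\pi$ is rational over $k$. It then remains to prove that $L(M_2)^\pi$ is rational over $L^\pi$, and for this I would apply Theorem \ref{t4.1}. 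This requires that $M_2$ be a projective $\bm{Z}\pi$-module and that $M_2/\Phi_e(\tau)M_2\simeq\langle\zeta_e-t,m_2\rangle$ be a free (equivalently principal) ideal for every $e\mid q$; the case $e=1$ is automatic since every nonzero ideal of $\bm{Z}$ is free. Projectivity of $M_2$ holds because $m_2$ is the product of a part coprime to $q$, which is projective by \cite[Proposition 7.1; Ka2, Theorem 3.9]{Sw1} as in Step 4 of Theorem \ref{t4.3}, and (when $q\mid m'$) the wild factor $\langle\tau-t,q^{d_0}\rangle$, projective by \cite[Proposition 2.6]{EM1} as in Lemma \ref{l4.4}; here $t$ has order $q$ modulo $q^{d_0}$, precisely the condition making $\bm{Z}\pi/\langle\tau-t,q^{d_0}\rangle$ cohomologically trivial.

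The heart of the matter --- and the step I expect to be the main obstacle --- is the case $e=q$, namely that $\langle\zeta_q-t,m'\rangle$ is principal in $\bm{Z}[\zeta_q]$; this is the conceptual content concealed in the hypotheses of Theorem \ref{t1.4}. Setting $f(X)=a_0+a_1X+\cdots+a_{q-2}X^{q-2}$ and $\alpha=f(\zeta_q)$, the relation $f(\zeta_q)-f(t)\in\langle\zeta_q-t\rangle$ together with $f(t)=bm'\in\langle m'\rangle$ gives $\alpha\in\langle\zeta_q-t,m'\rangle$, hence $\langle\alpha\rangle\subseteq\langle\zeta_q-t,m'\rangle$. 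I would then compute the norm of the right-hand ideal prime by prime using Lemma \ref{l3.1} and Lemma \ref{l3.3}: each $p_i\mid m''$ contributes a factor of norm $p_i^{d_i}$ (note $p_i\mid\Phi_q(t)$ since $t$ has order $q$ modulo $p_i$), and, when $q\mid m'$, the factor $\langle\zeta_q-t,q\rangle=\langle1-\zeta_q\rangle$ contributes $q$; altogether $N_{\bm{Q}(\zeta_q)/\bm{Q}}(\langle\zeta_q-t,m'\rangle)=m'=|N_{\bm{Q}(\zeta_q)/\bm{Q}}(\alpha)|$. An inclusion of ideals with equal finite index is an equality, so $\langle\zeta_q-t,m'\rangle=\langle\alpha\rangle$ is principal. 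Principality of $\langle\zeta_q-t,m_2\rangle$ follows at once, because by Lemma \ref{l3.3} the two ideals differ only by the principal factor $\langle\zeta_q-t,q^{d_0}\rangle\langle\zeta_q-t,q\rangle^{-1}=\langle1-\zeta_q\rangle^{d_0-1}$. With projectivity and freeness established, Theorem \ref{t4.1} shows that $L(M_2)^\pi$ is rational over $L^\pi$, and combined with the rationality of $L^\pi=k(M_1)^\pi$ over $k$ this yields the theorem. The case $q=2$ is immediate, as $\bm{Z}[\zeta_2]=\bm{Z}$ is a principal ideal domain.
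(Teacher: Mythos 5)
Your proof is correct, and its arithmetic core coincides with the paper's: both show $\langle\alpha\rangle\subseteq\langle\zeta_q-t,m'\rangle$, compute $N_{\bm{Q}(\zeta_q)/\bm{Q}}(\langle\zeta_q-t,m'\rangle)=m'$ prime by prime, deduce equality of the two ideals from equality of norms, and then transfer principality from $\langle\zeta_q-t,m'\rangle$ to $\langle\zeta_q-t,m_2\rangle$. Where you genuinely diverge is in the reduction. The paper splits the group as a direct product $G_{m,q}=\langle\sigma^{m_2}\rangle\times\langle\sigma^{m_1},\tau\rangle\simeq C_{m_1}\times G_{m_2,q}$, invokes \cite[Theorem 1.3]{KP} together with Fischer's theorem for the central factor, and feeds $G_{m_2,q}$ into Theorem \ref{t1.6} (whose proof further decomposes $m_2$ into prime powers and passes through $F_\pi$ and Theorem \ref{t4.2}). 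You stay entirely inside the lattice framework: reduce to $k(M_1\oplus M_2)^\pi$ with $M_1=\langle\tau-1,m_1\rangle$, handle $M_1$ by Fischer's Theorem \ref{t1.5} applied to the abelian group $\langle\sigma^{m_2},\tau\rangle$, and apply Theorem \ref{t4.1} to $M_2$ over $L=k(M_1)$ by verifying condition (iii) directly. This avoids \cite[Theorem 1.3]{KP} and the detour through the flabby class monoid; the cost is that you must check projectivity of $M_2=\langle\tau-t,m_2\rangle$ yourself. Your ``product of comaximal projective ideals'' claim does work, but needs the one-line justification $I\oplus J\simeq(I\cap J)\oplus(I+J)$ for comaximal ideals $I,J$ (or, closer to the proof of Theorem \ref{t1.6}, decompose $M_2$ further into the prime-power pieces $\langle\tau-t,p_i^{d_i}\rangle$ and $\langle\tau-t,q^{d_0}\rangle$, each projective by \cite[Proposition 2.6]{EM1} since $q\ge3$ forces every prime divisor of $m_2$ to be odd). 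Also, your disposal of $q=2$ should explicitly route through Theorem \ref{t1.1} (UFD hypothesis with $n=2$), as the paper does, since the projectivity input for your main argument is not available in general when $q=2$.

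One small inaccuracy, which does not damage the argument: when $q\mid m'$ one has $\langle\zeta_q-t,q^{d_0}\rangle=\langle 1-\zeta_q\rangle=\langle\zeta_q-t,q\rangle$, because the $(1-\zeta_q)$-adic valuation of $\zeta_q-t=(\zeta_q-1)-(t-1)$ is exactly $1$ (here $\fn{ord}_q(t-1)=d_0-1\ge1$ gives $t-1$ valuation $(q-1)(d_0-1)\ge2$). Hence $\langle\zeta_q-t,m_2\rangle$ is in fact equal to $\langle\zeta_q-t,m'\rangle$, rather than differing from it by $\langle 1-\zeta_q\rangle^{d_0-1}$ as you assert; since every power of $\langle 1-\zeta_q\rangle$ is principal, your conclusion is unaffected either way.
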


\begin{proof}

Write $G:=G_{m,n}=\langle\sigma,\tau:\sigma^m=\tau^q=1,\tau^{-1}\sigma\tau=\sigma^t\rangle$.

\medskip
Step 1.
If $q=2$, we may apply Theorem \ref{t1.1}.
Thus we assume that $q\ge 3$ from now on.
The notations $m'$, $m''$, $m_1$, $m_2$ and $p_i$ ($1\le i\le s$), $q_j$ ($1\le j\le s'$) in Lemma \ref{l4.5} remain in force throughout the proof.

Note that $m''$ and $m_2$ are odd integers.
Otherwise, $p_i=2$ for some $1\le i\le s$.
By definition, $t\in (\bm{Z}/p_i^{d_i}\bm{Z})^\times$ is of order $q>1$.
If $p_i=2$, then $(\bm{Z}/p_i^{d_i}\bm{Z})^\times$ is a group of order $2^{d_i-1}$.
Since $q$ is odd, we get a contradiction.

\medskip
Step 2.
By Lemma \ref{l4.5} and the definitions of $m_1$ and $m_2$ in Equation (4.3), it is routine to verify that $m_1 \mid t-1$ no matter whether $q \mid m^{\prime}$ or not.

Define $\lambda=\sigma^{m_2}$, $\rho=\sigma^{m_1}$. Then $\tau^{-1}\lambda \tau=\lambda$ because $m_1 \mid t-1$.

Define $H:=\langle \lambda \rangle \simeq C_{m_1}$ and $G_0:= \langle \rho, \tau \rangle \simeq G_{m_2,q}$.

Thus $G=H\times G_0$. If both $k(H)$ and $k(G_0)$ are rational, then $k(G)$ is also rational by \cite[Theorem 1.3]{KP}. Note that $k(H)$ is rational by Theorem \ref{t1.5}. It remains to show that $k(G_0)$ is rational over $k$.

To show that $k(G_0)$ is rational over $k$, we will apply Theorem \ref{t1.6} (remember that both $m_2$ and $q$ are odd by Step 1). Thus the goal is to show that $\langle \zeta_q-t,m_2\rangle$ is a principal ideal of $\bm{Z}[\zeta_q]$.

\medskip
Step 3.
From the assumption of Theorem \ref{t1.4} there exist integers $a_0,a_1,\ldots,a_{q-2},b$ such that $bm'=a_0+a_1t+\cdots+a_{q-2}t^{q-2}$
and $N_{\bm{Q}(\zeta_q)/\bm{Q}} (\alpha)=\pm m'$ where $\alpha:=a_0+a_1\zeta_q+\cdots+a_{q-2}\zeta_q^{q-2}$.
It follows that $\alpha-bm'=(\zeta_q-t)\cdot \beta$ for some $\beta \in \bm{Z}[\zeta_q]$,
i.e.\ $\alpha\in \langle \zeta_q-t,m'\rangle$.

We will show that $N_{\bm{Q}(\zeta_q)/\bm{Q}} (\langle \zeta_q-t,m'\rangle)=m'$.

If $q\nmid m'$, then $m'=m''=\prod_i p^d$ and $\langle \zeta_q-t,m'\rangle=\prod_i \langle\zeta_q-t,p_i^{d_i}\rangle$ by Lemma \ref{l3.2}.
Since $\langle \zeta_q-t,p_i^{d_i}\rangle \subset \langle\zeta_q-t,p_i\rangle$ and $t\in (\bm{Z}/p_i^{d_i} \bm{Z})^\times$ is of order $q$,
it follows that $\langle\zeta_q-t,p_i\rangle \subsetneq \bm{Z}[\zeta_q]$ is of index $p_i$ by Lemma \ref{l3.3}.
Hence $N_{\bm{Q}(\zeta_q)/\bm{Q}} (\langle\zeta_q-t,p_i\rangle)=p_i$ and $N_{\bm{Q}(\zeta_q)/\bm{Q}}(\langle\zeta_q-t,p_i^{d_i}\rangle)=p_i^{d_i}$.
Thus $N_{\bm{Q}(\zeta_q)/\bm{Q}}(\langle\zeta_q-t,m'\rangle)=m'$.

For the other situation, if $q\mid m'$, then $m'=qm''$.
We have $\langle\zeta_q-t,m'\rangle=\langle\zeta_q-t,q\rangle\cdot \langle \zeta_q-t,m''\rangle$.
Note that $t-1=aq^{d_0-1}$ where $d_0-1\ge 1$ and $q\nmid a$ by Lemma \ref{l4.5}.
It follows that $\zeta_q-t=\zeta_q-1-(t-1)=(\zeta_q-1)\cdot (1+(\zeta_q-1)\cdot \beta)$ for some $\beta\in\bm{Z}[\zeta_q]$.
Thus $\langle \zeta_q-t,q\rangle=\langle \zeta_q-1\rangle$ is of norm $q$.
We can show that $N_{\bm{Q}(\zeta_q)/\bm{Q}}(\langle\zeta_q-t,m''\rangle)=m''$ as the previous situation.
Hence we find $N_{\bm{Q}(\zeta_q)/\bm{Q}}(\langle \zeta_q-t,m'\rangle)=m'$.

Since $\langle\alpha\rangle\subset \langle\zeta_q-t,m'\rangle$ and both ideals have the same norm $m'$,
it follows that $\langle\alpha\rangle=\langle\zeta_q-t,m'\rangle$,
i.e.\ the ideal $\langle\zeta_q-t,m'\rangle$ is a principal ideal.

\medskip
Step 4.
We will show that the ideals $\langle\zeta_q-t,m''\rangle$ and $\langle\zeta_q-t,m_2\rangle$ are principal ideals.
This will finish the proof by Step 2.

If $q\nmid m'$, then $m'=m''=m_2$. Thus $\langle\zeta_q-t,m_2\rangle= \langle\zeta_q-t,m''\rangle =\langle\zeta_q-t,m'\rangle$ is a principal ideal by Step 3.

It remains to consider the case $q\mid m'$, i.e. $m'=qm''$ and $m_2=q^{d_0}m''$.

From $\langle\alpha\rangle=\langle\zeta_q-t,m'\rangle =\langle\zeta_q-t,q\rangle\langle\zeta_q-t,m''\rangle
=\langle\zeta_q-1\rangle\cdot\langle\zeta_q-t,m''\rangle$, we get $\alpha=(\zeta_q-1)\cdot \beta$ for some $\beta\in \langle\zeta_q-t,m''\rangle$.
Hence $\langle\zeta_q-t,m''\rangle=\langle\beta\rangle$ is a principal ideal.

Now $\langle\zeta_q-t,m_2\rangle=\langle \zeta_q-t,q^{d_0}\rangle\cdot \langle\zeta_q-t,m''\rangle
=\langle\zeta_q-1\rangle \cdot \langle\zeta_q-t,m''\rangle$ is a principal ideal because so is the ideal $\langle\zeta_q-t,m''\rangle$.
\end{proof}

\begin{theorem} \label{t4.7}
Let $p$ and $n$ be positive integers where $p$ is a prime number and assume that $t$ is an integer such that $t\in (\bm{Z}/p\bm{Z})^\times$ is of order $n$.
Assume that there is some element $\alpha\in \bm{Z}[\zeta_n]$ satisfying that $N_{\bm{Q}(\zeta_n)/\bm{Q}}(\alpha)=\pm p$.
If $k$ is a field with $\zeta_p,\zeta_n\in k$, then $k(G_{p,n})$ is rational over $k$.
\end{theorem}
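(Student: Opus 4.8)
The plan is to reduce the statement to Theorem \ref{t1.6} (equivalently to Lemma \ref{l4.4} with $d=1$) by using the chain of equivalences in Lemma \ref{l3.3} (3) to translate the single norm hypothesis into the family of principal-ideal conditions that those results require. First I would dispose of the degenerate case $n=1$: then $G_{p,1}=C_p$ is abelian of exponent $p$ and $\zeta_p\in k$, so $k(G_{p,1})$ is rational by Fischer's Theorem \ref{t1.5}. Hence I may assume $n\ge 2$. Since $t\in(\bm{Z}/p\bm{Z})^\times$ has order $n\ge 2$, the group $(\bm{Z}/p\bm{Z})^\times$ is nontrivial, which forces $p\ge 3$. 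In particular $m:=p$ is an odd prime with $n\mid\phi(p)=p-1$, so the standing hypotheses of Lemma \ref{l3.3} are satisfied with $m=p$ and $d=1$.

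Next I would invoke Lemma \ref{l3.3} (3). By assumption there is an element $\alpha\in\bm{Z}[\zeta_n]$ with $N_{\bm{Q}(\zeta_n)/\bm{Q}}(\alpha)=\pm p$. The equivalence in Lemma \ref{l3.3} (3) then guarantees that for every $e\mid n$ the ideal $\langle\zeta_e-t,p\rangle$ of $\bm{Z}[\zeta_e]$ is principal. This is precisely condition (ii) of Theorem \ref{t1.6} (and of Lemma \ref{l4.4}) in the case $m=p$, while condition (i) holds by the very definition of $t$.

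Finally, applying Theorem \ref{t1.6} with the odd integer $m=p$---or equivalently Lemma \ref{l4.4} with $d=1$, noting that $\gcd\{p,n\}=1$ also permits Theorem \ref{t4.3}---yields that $k(G_{p,n})$ is rational over $k$, which completes the argument. The essential content lies entirely in the translation step: the single arithmetic datum ``there exists $\alpha\in\bm{Z}[\zeta_n]$ of norm $\pm p$'' must be converted into the uniform family of statements ``$\langle\zeta_e-t,p\rangle$ is principal for all $e\mid n$.'' I expect this to be the only genuine obstacle, and it is handled exactly by Lemma \ref{l3.3} (3), whose proof in turn rests on the conjugacy of the prime ideals of $\bm{Z}[\zeta_n]$ lying over $p$ together with norm compatibility along the tower $\bm{Q}(\zeta_n)/\bm{Q}(\zeta_e)$.
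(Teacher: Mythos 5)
Your proposal is correct and follows essentially the same route as the paper: the paper likewise deduces that $\langle\zeta_n-t,p\rangle$ is principal from the conjugacy of the prime ideals over $p$ (and that $\langle\zeta_e-t,p\rangle=\bm{Z}[\zeta_e]$ for proper divisors $e\mid n$ via Lemma \ref{l3.3}), then applies Theorem \ref{t4.3}. Your only difference is invoking the packaged equivalence of Lemma \ref{l3.3}\,(3) rather than rerunning its argument inline, plus a slightly more careful treatment of the degenerate cases $n=1$ and $p=2$.
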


\begin{proof}
The ideals $\langle\alpha\rangle$ and $\langle\zeta_n-t,p\rangle$ are of norm $p$.
Both of them are prime ideals of $\bm{Z}[\zeta_n]$ lying over $p$.
Thus they are conjugate to each other.
In particular, the ideal $\langle\zeta_n-t,p\rangle$ is a principal ideal.
For any $e\mid n$, if $e <n$, then $\langle\zeta_e-t,p\rangle=\bm{Z}[\zeta_e]$ by Lemma \ref{l3.3}.
Now apply Theorem \ref{t4.3}.
\end{proof}

\begin{theorem} \label{t4.8}
Let $n$ be a positive integer.
Define $S:=\{p\in \bm{N}:p$ is a prime number and $p$ splits completely into the product of principal prime ideals of $\bm{Z}[\zeta_n]\}$, and define $S_0=\{p\in \bm{N}:p$ is a prime number such that $\bm{C}(G_{p,n})$ is rational over $\bm{C} \}$. Then the Dirichlet densities of $S$ and $S_0$ are positive; in particular, $S_0$ is an infinite set. Consequently, there are infinitely many prime numbers $p$ satisfying that $\bm{C}(G_{p,n})$ is rational over $\bm{C}$.
\end{theorem}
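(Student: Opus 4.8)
The plan is to establish the inclusion $S \subseteq S_0$ (up to a finite set of primes), and then to compute the Dirichlet density of $S$ by means of class field theory applied to the Hilbert class field of $\bm{Q}(\zeta_n)$.

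First I would verify that every prime in $S$ lies in $S_0$ by invoking Theorem \ref{t4.7}. Let $p \in S$, so that $p$ splits completely into principal prime ideals of $\bm{Z}[\zeta_n]$. Since $p$ splits completely in $\bm{Q}(\zeta_n)$, we have $p \equiv 1 \pmod{n}$, whence $n \mid p-1 = \phi(p)$ and the cyclic group $(\bm{Z}/p\bm{Z})^\times$ contains an element $t$ of order $n$. Because $p$ splits completely, each prime ideal above $p$ has residue degree $1$ and hence norm $p$; as these primes are principal by hypothesis, there is an element $\alpha \in \bm{Z}[\zeta_n]$ with $N_{\bm{Q}(\zeta_n)/\bm{Q}}(\alpha) = \pm p$. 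Theorem \ref{t4.7} then yields that $\bm{C}(G_{p,n})$ is rational over $\bm{C}$, i.e.\ $p \in S_0$. Thus it suffices to show that $S$ has positive Dirichlet density.

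The main step is the density computation, for which I would use the Hilbert class field $H$ of $\bm{Q}(\zeta_n)$. By class field theory, the Artin map identifies $\fn{Gal}(H/\bm{Q}(\zeta_n))$ with the ideal class group $C(\bm{Z}[\zeta_n])$, and a prime ideal of $\bm{Z}[\zeta_n]$ is principal if and only if its Frobenius is trivial, i.e.\ if and only if it splits completely in $H$. I would use this to identify $S$, away from the finitely many primes ramifying in $H$, with the set of rational primes that split completely in $H$: indeed, $p$ splits completely into principal primes in $\bm{Z}[\zeta_n]$ exactly when $p$ splits completely in $\bm{Q}(\zeta_n)$ and every prime above $p$ is principal, and since $\bm{Q}(\zeta_n) \subseteq H$, this is equivalent to $p$ splitting completely in $H$.

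Finally I would apply the Chebotarev density theorem to the Galois extension $H/\bm{Q}$: the set of rational primes splitting completely in $H$ has Dirichlet density $1/[H:\bm{Q}] > 0$. Since this set agrees with $S$ outside a finite set, $S$ has positive Dirichlet density, and via the inclusion $S \subseteq S_0$ so does $S_0$; in particular $S_0$ is infinite. The point demanding care is that $H/\bm{Q}$ is in general non-abelian (only $\bm{Q}(\zeta_n)/\bm{Q}$ is abelian), so the density is governed by Chebotarev's theorem and the degree $[H:\bm{Q}]$ rather than by a single congruence condition; one must therefore work with the full Hilbert class field rather than attempt a direct Dirichlet-type argument using only the splitting behavior in $\bm{Q}(\zeta_n)$.
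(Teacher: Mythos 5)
Your proposal is correct, and it rests on the same class-field-theoretic input as the paper's proof --- the Hilbert class field $H$ of $\bm{Q}(\zeta_n)$, the equivalence ``$P$ principal $\Leftrightarrow$ $P$ splits completely in $H$'', and a density theorem --- but you organize the density computation differently. The paper applies the density theorem to the abelian extension $H/\bm{Q}(\zeta_n)$, obtaining the density $1/h_n$ of principal primes among all prime ideals of $\bm{Z}[\zeta_n]$, and then needs two further steps (its Steps 4 and 5) to convert this into a Dirichlet density of \emph{rational} primes: it discards the primes of residue degree $\ge 2$, whose contribution to $\sum_P N(P)^{-s}$ is analytic at $s=1$, and uses the $\phi(n)$-to-one map $P\mapsto P\cap\bm{Z}$ from degree-one principal primes onto $S$, arriving at $d(S)=1/(\phi(n)h_n)$. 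You instead identify $S$, away from the finitely many ramified primes, with the set of rational primes splitting completely in $H$ and apply the Chebotarev density theorem directly to $H/\bm{Q}$, getting $d(S)=1/[H:\bm{Q}]=1/(\phi(n)h_n)$ in one stroke; this agrees with the paper's answer and bypasses its analytic bookkeeping. The one point you should make explicit is that $H/\bm{Q}$ is Galois (though typically non-abelian), which Chebotarev over $\bm{Q}$ requires: this holds because $\bm{Q}(\zeta_n)/\bm{Q}$ is Galois and $H$ is intrinsically characterized as the maximal unramified abelian extension of $\bm{Q}(\zeta_n)$, so it is preserved by every automorphism over $\bm{Q}$. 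Your first step, $S\subseteq S_0$ via Theorem \ref{t4.7}, coincides with the paper's Step 1.
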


\begin{proof}
Step 1.
If $p \in S$, then $p$ splits completely in $\bm{Z}[\zeta_n]$. Thus $n \mid \phi(p)$ by considering the factorization of $\Phi_n(X)$$\pmod{p}$; alternatively, apply \cite[page 103]{Ne}.
It follows that there is an integer $t$ such that $t\in (\bm{Z}/p\bm{Z})^\times$ is of order $n$. Hence we may define the group $G_{p,n}$ as in Definition \ref{d1.2}.

Since  $p$ splits completely into the product of principal prime ideals of $\bm{Z}[\zeta_n]$, there is some element  $\alpha\in\bm{Z}[\zeta_n]$ such that $\langle\alpha\rangle$ is a prime ideal lying over $p$ with  $N_{\bm{Q}(\zeta_n)/\bm{Q}}(\alpha)=\pm p$ by Lemma \ref{l3.3}. It follows that $\bm{C}(G_{p,n})$ is rational by Theorem \ref{t4.7}. In summary, if $p$ is a prime number and $p \in S$, then $\bm{C}(G_{p,n})$ is rational, i.e. $S \subset S_0$.

It remains to show that the Dirichlet density of $S$ is positive. We denote by $d(S)$ the Dirichlet density of $S$; the reader is referred to \cite[page 130]{Ne} for its definition.

\medskip
Step 2.
Suppose that $p \in S$ and $P$ is a non-zero principal prime ideal of $\bm{Z}[\zeta_n]$ lying over $p$,
then $P$ is of degree one, i.e.\ natural map $\bm{Z}/p\bm{Z} \to \bm{Z}[\zeta_n]/P$ is an isomorphism.
Define $T_1=\{P:P$ is a non-zero principal prime ideal of degree one in $\bm{Z}[\zeta_n]\}$, and $T=\{P:P$ is a non-zero principal prime ideal of $\bm{Z}[\zeta_n]\}$. We will show that $d(T_1)=d(T)=h_n$ where $h_n$ is the class number of $\bm{Q}(\zeta_n)$.

\medskip
Step 3.
Let $L$ be the Hilbert class field of $\bm{Q}(\zeta_n)$; note that $\fn{Gal}(L/\bm{Q}(\zeta_n))\simeq C(\bm{Z}[\zeta_n])$ the ideal class group of $\bm{Z}[\zeta_n]$. If $P$ is a non-zero prime ideal of $\bm{Z}[\zeta_n]$, denote by $(P,L/\bm{Q}(\zeta_n))$ the Artin symbol of $P$, if $P$ is unramified in $L$ (see \cite[page 105]{Ne}). If $P$ is a non-zero prime ideals of $\bm{Z}[\zeta_n]$, then $P\in T$ if and only if $P$ splits completely in $L$ by \cite[page 107, Corollary 8.5]{Ne} (alternatively, this fact is one of the conditions in the definition of the Hilbert class field). On the other hand, $P$ splits completely in $L$ is equivalent to $(P,L/\bm{Q}(\zeta_n))=1$. It follows from the Tchebotarev density theorem that $d(T)=1/h_n$ \cite[page 132, Theorem 6.4]{Ne}.

In summary, we have
\begin{equation}
\frac{1}{h_n}=\lim_{s\to 1^+} \frac{\sum_{P\in T}\frac{1}{N(P)^s}}{\sum_P\frac{1}{N(P)^s}} \label{eq4.4}
\end{equation}
where $N(P)$ is the abbreviation of $N_{\bm{Q}(\zeta_n)/\bm{Q}}(P)$.

\medskip
Step 4.
Note that $\sum_P \frac{1}{N(P)^s} \sim \log \frac{1}{s-1}$ (see \cite[page 130]{Ne}).

Also note that $\sum_{P\in T}\frac{1}{N(P)^s} \sim \sum_{P\in T \atop \fn{deg} P=1} \frac{1}{N(P)^s}$
because $\sum_{\fn{deg}P\ge 2} \frac{1}{N(P)^s}$ is an analytic function at $s=1$ (see \cite[page 130]{Ne}).
We find that
\begin{equation}
\frac{1}{h_n}=\lim_{s\to 1^+}\frac{\sum_{P\in T_1}\frac{1}{N(P)^s}}{\log \frac{1}{s-1}}. \label{eq4.5}
\end{equation}

\medskip
Step 5.
Define a function $\Psi:T_1\to S$ by $\Psi(P)=P\cap \bm{Z}$ (here we identify a prime number $p$ with the prime ideal $p\bm{Z}$ in $\bm{Z}$). Note that $\Psi$ is well-defined. Also note that $\Psi^{-1}(p)$ is a set of $\phi(n)$ elements for any $p\in S$.
Thus
\[
\sum_{P\in T_1}\frac{1}{N(P)^s} = \phi(n)\cdot \sum_{p\in S} \frac{1}{p^s}.
\]

We have also $\sum_p \frac{1}{p^s}\sim \log \frac{1}{s-1}$ as in Step 4.
From Formula \eqref{eq4.5}, we obtain
\[
d(S)=\lim_{s\to 1^+} \frac{\sum_{p\in S}\frac{1}{p^s}}{\sum_p \frac{1}{p^s}}
=\lim_{s\to 1^+} \frac{1}{\phi(n)} \frac{\sum_{P\in T_1}\frac{1}{N(P)^s}}{\log\frac{1}{s-1}}=\frac{1}{\phi(n)\cdot h_n}. \qedhere
\]
\end{proof}

\begin{example} \label{ex4.9}
We will supply examples of groups $G\simeq A\rtimes C_n$ (where $A$ is an abelian group)
such that $\bm{C}(G)$ is rational while we cannot apply Theorem \ref{t1.1} to assert the rationality (because $\bm{Z}[\zeta_n]$ is not a UFD).

Let $d_1,\ldots,d_s$, $d$ be integers with $1\le d\le \min\{d_1-1,d_2-1,\ldots,d_s-1\}$.
Define $n=3^d$, $m_i=3^{d_i}$ for $1\le i\le s$.
For each $1\le i\le s$, find an integer $t_i$ such that $t_i\in (\bm{Z}/m_i\bm{Z})^\times$ is of order $n$.
Define $G:=\langle\sigma_1,\ldots,\sigma_s,\tau:\sigma_i^{m_i}=\tau^n=1,\sigma_i\sigma_j=\sigma_j\sigma_i$,
$\tau^{-1}\sigma_i\tau=\sigma_i^{t_i}$ for $1\le i\le s\rangle \simeq (C_{m_1}\times C_{m_2}\times\cdots\times C_{m_s})\rtimes C_n$.
Note that $G$ is a 3-group.
If $\zeta_{3^e}\in k$ (where $e\ge d_i$ for all $1\le i\le s$),
we claim that $k(G)$ is rational over $k$.

Note that, if $d\ge 4$, then $\bm{Z}[\zeta_n]$ is not a UFD by Theorem \ref{t1.2}.
Thus we cannot apply Theorem \ref{t1.1} to show that $k(G)$ is rational if $d\ge 4$.

On the other hand, the method in Lemma \ref{l4.4} and in the proof of Theorem \ref{t1.6} may be adapted to show that $k(G)$ is rational.
Write $\pi=\langle\tau\rangle$ and define $\pi$-lattices $M_i$ as in Formula \eqref{eq4.2} by
\[
M_i=\langle \tau-t_i,m_i\rangle \subset \bm{Z}\pi.
\]

As before, it is easy to see that each $M_i$ is a $\bm{Z}\pi$-projective module and $k(G)$ is rational over $k(M_1\oplus\cdots\oplus M_s)^{\langle\tau\rangle}$.
By the same proof as \cite[Corollary 3.3]{EM1},
$[M_i]^{fl}=0$ for all $i$ (rigorously speaking, $[M_i]^{fl}=0$ because of \cite[Proposition 3.2]{EM1} and its proof).
Thus $[M_1\oplus M_2\oplus \cdots\oplus M_s]^{fl}=0$ and the same arguments in the proof of Theorem \ref{t1.6} work as well.
Done.

The same method can be applied to 2-groups.
Let $d_1,\ldots,d_s$, $d$ be integers with $1\le d\le \min\{d_1-2,d_2-2,\ldots,d_s-2\}$.
Define $n=2^d$, $m_i=2^{d_i}$.
Find an integer $t_i$ such that $t_i\in (\bm{Z}/m_i\bm{Z})^\times$ is of order $n$
(this is possible because $d\le d_i-2$).
Define $G:=\langle\sigma_1,\ldots,\sigma_s,\tau:\sigma_i^{m_i}=\tau^n=1,\sigma_i\sigma_j=\sigma_j\sigma_i,
\tau^{-1}\sigma_i\tau=\sigma_i^{t_i}$ for $1\le i\le s\rangle$.
If $\zeta_{2^e}\in k$ (where $e\ge d_i$ for all $1\le i\le s$), we claim that $k(G)$ is rational over $k$.

Note that, if $d\ge 6$, then $\bm{Z}[\zeta_n]$ is not a UFD by Theorem \ref{t1.2}.
The proof of the rationality is similar to the above situation of 3-groups, but some modification is necessary.

If $d=1$, we may apply Theorem \ref{t1.1}.
Thus we assume that $d\ge 2$.
Since $\langle t_i\rangle\simeq C_n$ is a cyclic group of order $\ge 4$,
it follows that $-1\notin \langle t_i\rangle \subset (\bm{Z}/m_i\bm{Z})^\times$.
By \cite[Proposition 2.6]{EM1}, the $\pi$-lattice $\langle \tau-t_i,m_i\rangle \subset \bm{Z}\pi$ is projective where $\pi=\langle\tau\rangle$.
Note that $t-1$ is an even integer.
If $e\mid n$ and $e\ge 2$, then $\langle\zeta_e-t_i,m_i\rangle=\langle \zeta_e-1\rangle$.
Thus $[M_1\oplus M_2\oplus\cdots\oplus M_s]^{fl}=0$.
Done.

We remark that, if $G$ is a metacyclic $p$-group (a $p$-group which is an extension of a cyclic group by another cyclic group), then $\bm{C}(G)$ is always rational by \cite{Ka1}.
\end{example}

\begin{example} \label{ex4.10}
Let $p\ge 3$ be a prime number, $a_1,a_2,\ldots,a_s$ be positive integers with $p\nmid a_1a_2\cdots a_s$.
Define $m_i=a_ip^{d_i}$, $t_i=1+a_ip^{d_i-1}$ where $d_i\ge 2$ for $1\le i\le s$.
Define $G=\langle \sigma_1,\ldots,\sigma_s,\tau:\sigma_i^{m_i}=\tau^p=1,\sigma_i\sigma_j=\sigma_j\sigma_i,\tau^{-1}\sigma_i\tau=\sigma_i^{t_i}$
for $1\le i\le s\rangle$.
If $\zeta_e\in k$ (where $e=\fn{lcm}\{m_i:1\le i\le s\}$), we claim that $k(G)$ is rational.

Note that the case when $s=1$ is proved in \cite[Corollary 18]{CH}.

\medskip
Define $\lambda_i=\sigma_i^{p^{d_i}}$, $\rho_i=\sigma_i^{a_i}$,
$H_1=\langle\lambda_i:1\le i\le s\rangle$, $H_2=\langle\rho_i,\tau:1\le i\le s\rangle$.
Then $G=H_1\times H_2$.
Note that $k(H_1)$ is rational by Theorem \ref{t1.5}.
We will prove that $k(H_2)$ is rational.
Then we may apply \cite[Theorem 1.3]{KP} to conclude that $k(G)$ is rational.

Now we consider $k(H_2)$.
Define $\pi=\langle\tau\rangle$ and $\pi$-lattices $M_i$ defined by $M_i=\langle \tau-t_i$, $p^{d_i}\rangle$ as before.
Note that $\zeta_p-t_i=\zeta_p-1-(t_i-1)=(\zeta_p-1)(1+\alpha(\zeta_p-1)^{p-2})$ for some $\alpha\in \bm{Z}[\zeta_p]$.
Hence $\langle \zeta_p-t_i,p^{d_i}\rangle=\langle \zeta_p-1\rangle$ is a principal ideal.
Thus $[M_1\oplus\cdots\oplus M_s]^{fl}=0$.
Done.
\end{example}

\begin{example} \label{ex4.12}
We will give an application of Theorem \ref{t4.8}.

Let $k$ be an algebraic number field. Define $P_0= \{p: p$ is a prime numer, $p \le 43 \} \cup \{61, 67, 71 \}$ and $P_k=\{p: p$ ramifies in $k \}$. It is known that, if $p$ is a prime number, then ($1$) $\bm{Q}(C_p)$ is rational over $\bm{Q}$ if and only if $p \in P_0$ \cite{Pl}, and ($2$) if $p \notin (P_0 \cup P_k)$, then $k(C_p)$ is not stably rational over $k$ \cite{Ka4}. If $p \in P_0$, then $k(C_p)$ is rational over $k$ because $\bm{Q}(C_p)$ is rational over $\bm{Q}$. However, it is not clear whether $k(C_p)$ is rational over $k$ if $p \in P_k$.

We will construct infinitely many pairs $(p,k)$ where $p$ is a prime number, $k$ is an algebraic number field such that $p$ ramifies in $k$ and $k(C_p)$ is rational over $k$.

Given a positive integer $n$, let $S$ be the set defined in Theorem \ref{t4.8}. For each $p \in S$, $n \mid \phi(p)$ (see Step 1 in the proof of Theorem \ref{t4.8}). Choose $k$ to be the subfield of $\bm{Q}(\zeta_p)$ such that $[\bm{Q}(\zeta_p): k]=n$. Note that the field $k$ depends on the choice of $p \in S$. Also note that $p$ ramifies in $\bm{Q}(\zeta_p)$ (and also in $k$). Thus $\bm{Q}(\zeta_p)$ and $\bm{Q}(\zeta_n)$ are linearly disjoint over $\bm{Q}$, because $p$ splits completely in $\bm{Q}(\zeta_n)$.

Since $k(\zeta_p)=\bm{Q}(\zeta_p)$ is of degree $n$ over $k$ and there is some element  $\alpha\in\bm{Z}[\zeta_n]$ with  $N_{\bm{Q}(\zeta_n)/\bm{Q}}(\alpha)=\pm p$ (see Step 1 in the proof of Theorem \ref{t4.8}), we find that $k(C_p)$ is rational over $k$ by \cite[page 321, Corollary 7.1]{Le}.
\end{example}

\section{The multiplicative invariant fields}

Let $\pi$ be the cyclic group of order $n$.
Beneish and Ramsey \cite{BR} introduced a notion the Property $*$ for $\pi$ in \cite[Definition 3.3]{BR} and proved Theorem \ref{t1.7}.

Here is our interpretation of the Property $*$ for $\pi$.

\begin{theorem} \label{t5.2}
Let $\pi$ be a cyclic group of order $n$.
Then the following are equivalent:

(i) the Property $*$ for $\pi$;

(ii) $F_\pi=0$;

(iii) $\bm{Z}[\zeta_n]$ is a UFD.
\end{theorem}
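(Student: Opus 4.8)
The goal is to prove the three-way equivalence in Theorem~\ref{t5.2}. The anchor of the whole argument is Theorem~\ref{t4.2}, which gives an explicit isomorphism $c:F_\pi\to\bigoplus_{e\mid n}C(\bm{Z}[\zeta_e])$. Since the target of $c$ is zero precisely when every $C(\bm{Z}[\zeta_e])$ vanishes, and $C(\bm{Z}[\zeta_e])=0$ means $\bm{Z}[\zeta_e]$ has class number one (equivalently, being a Dedekind domain, is a PID, hence a UFD), the implication $(ii)\Leftrightarrow(iii)$ should come down to the following number-theoretic fact: $\bm{Z}[\zeta_n]$ is a UFD if and only if $\bm{Z}[\zeta_e]$ is a UFD for every divisor $e\mid n$. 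The plan is to invoke this fact; it is well known (e.g.\ the Masley--Montgomery classification in Theorem~\ref{t1.2} is already closed under taking divisors of $n$, and more intrinsically $h(\bm{Q}(\zeta_e))\mid h(\bm{Q}(\zeta_n))$ for $e\mid n$, so the class number of a subfield divides that of the field). Thus $(ii)\Leftrightarrow(iii)$ reduces to $F_\pi=0\iff\bigoplus_{e\mid n}C(\bm{Z}[\zeta_e])=0\iff\bm{Z}[\zeta_n]$ is a UFD.

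\textbf{Handling the Property $*$.} The remaining work is to connect the Property~$*$ of Beneish--Ramsey to the vanishing of $F_\pi$. Here I would unwind Definition~3.3 of \cite{BR}: the Property~$*$ is a condition asserting that certain $\pi$-lattices (the flabby lattices arising from their construction, or equivalently the generators of $F_\pi$) are stably permutation. The natural route is to show directly that the Property~$*$ is equivalent to the statement that every flabby class in $F_\pi$ is zero, i.e.\ $F_\pi=0$. One direction, $(ii)\Rightarrow(i)$, should be immediate: if $F_\pi=0$ then every flabby lattice is stably permutation, which forces whatever stable-permutation requirement the Property~$*$ encodes. For $(i)\Rightarrow(ii)$ I expect to argue that the Property~$*$ supplies enough stably-permutation lattices to kill a generating set of $F_\pi$; since $F_\pi$ is a finite group by Theorem~\ref{t4.2}, it suffices to check that the specific lattices appearing in the Property~$*$ map onto generators of $\bigoplus_{e\mid n}C(\bm{Z}[\zeta_e])$ under $c$.

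\textbf{The main obstacle.} The genuinely delicate step is the precise matching between the Property~$*$ (a statement phrased inside \cite{BR} in terms of a particular family of modules or a lifting/splitting condition) and the cohomological/flabby-class condition $F_\pi=0$. Translating the Property~$*$ faithfully into the language of flabby resolutions and the invariant $c$ is where the real content lies; everything else is bookkeeping riding on Theorem~\ref{t4.2}. I would therefore spend most of the effort carefully writing out the Property~$*$, identifying the lattices it constrains, computing their images under the isomorphism $c$, and verifying these images generate the target group. Once that identification is in place, the chain $(i)\Leftrightarrow(ii)\Leftrightarrow(iii)$ closes, and as an immediate payoff one recovers Theorem~\ref{t1.7} by combining $F_\pi=0$ with Theorem~\ref{t2.5}(1) (stable rationality $\iff$ $[M]^{fl}=0$) and with Lemma~\ref{l4.12}, giving the promised alternative proof.
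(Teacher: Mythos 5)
Your treatment of (ii) $\Leftrightarrow$ (iii) is correct and coincides with the paper's: Theorem \ref{t4.2} identifies $F_\pi$ with $\bigoplus_{e\mid n}C(\bm{Z}[\zeta_e])$, and the divisibility of class numbers of cyclotomic subfields (the paper cites \cite[page 39, Proposition 4.11]{Wa}) closes the loop. The problem is the other half. You explicitly flag ``the precise matching between the Property $*$ and $F_\pi=0$'' as the place ``where the real content lies,'' and then you do not supply that content: you propose to ``unwind Definition 3.3 of \cite{BR},'' to identify ``whatever stable-permutation requirement the Property $*$ encodes,'' and to check that certain unspecified lattices ``map onto generators'' of the class group sum. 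That is a plan, not a proof, and it leaves the theorem's only nontrivial implication unestablished.

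Concretely, two ingredients are missing. First, the paper does not prove (ii) $\Rightarrow$ (i) directly as you propose; it proves (iii) $\Rightarrow$ (i) by citing \cite[Corollary 3.9]{BR}, so the cycle is (iii) $\Rightarrow$ (i) $\Rightarrow$ (ii) $\Rightarrow$ (iii). Second, and more importantly, the implication (i) $\Rightarrow$ (ii) rests on a structural fact you never name: by \cite[Lemma 2.2]{EK} and \cite[Theorem 1.4]{EK} there are isomorphisms $F_\pi\simeq T(\pi)\simeq C(\bm{Z}\pi)/C^q(\bm{Z}\pi)$, i.e.\ the flabby class group of a cyclic $\pi$ is generated by the flabby classes of \emph{projective ideals} of $\bm{Z}\pi$. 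This is exactly what makes a ``generating set'' argument possible, because \cite[Theorem 3.11]{BR} says that under the Property $*$ every projective ideal is stably permutation, whence $C(\bm{Z}\pi)/C^q(\bm{Z}\pi)=0$ and $F_\pi=0$. Without identifying projective ideals as the relevant generators (or some equivalent description of what the Property $*$ constrains), your step ``verify these images generate the target group'' cannot be carried out, and the implication (i) $\Rightarrow$ (ii) remains open in your write-up.
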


\begin{proof}
(iii) $\Rightarrow$ (i) by \cite[Corollary 3.9]{BR}.

(i) $\Rightarrow$ (ii). By \cite[Lemma 2.2]{EK} we have an isomorphism $T(\pi)\to F_\pi$ where $T(\pi)$ is defined in \cite[page 86; EK, Definition 1.3]{EM2}.
In order to show that $F_\pi=0$,
it is necessary and sufficient to show that $T(\pi)=0$.
By \cite[Theorem 1.4]{EK}, we have an isomorphism $C(\bm{Z}\pi)/C^q(\bm{Z}\pi)\to T(\pi)$ where $C(\bm{Z}\pi)$ is the subgroup of $K_0(\bm{Z}\pi)$,
the Grothendireck group of the category of finitely generated projective $\bm{Z}\pi$-modules,
defined by $C(\bm{Z}\pi):=\{[\c{A}]-[\bm{Z}\pi]\in K_0(\bm{Z}\pi):\c{A}$ is a projective ideal over $\bm{Z}\pi\}$ (see \cite[Definition 2.11]{EK}).
The map $\varphi$ is induced by the map $\psi:C(\bm{Z}\pi)\to T(\pi)$ defined by sending $[\c{A}]-[\bm{Z}\pi]\in C(\bm{Z}\pi)$ to $[\c{A}]\in T(\pi)$;
note that $C^q(\bm{Z}\pi):=\{[\c{A}]-[\bm{Z}\pi]\in C(\bm{Z}\pi): [\c{A}]^{fl}=0$ in $F_\pi\}$
$=\{[\c{A}]-[\bm{Z}\pi]\in C(\bm{Z}\pi):[\c{A}]=0$ in $T(\pi)\}$ is the kernel of $\psi$ (see \cite[Definition 2.12]{EK}).
By \cite[Theorem 3.11]{BR} every projective ideal $\c{A}$ is stably permutation;
thus $C(\bm{Z}\pi)/C^q(\bm{Z}\pi)=0$ and so $T(\pi)=0$.

(ii) $\Rightarrow$ (iii) by Theorem \ref{t4.2}. Done.

We indicate a direct proof of (iii) $\Rightarrow$ (ii). Suppose that $\bm{Z}[\zeta_n]$ is a UFD.
For any $e\mid n$.
$\bm{Z}[\zeta_e]$ is also a UFD by \cite[page 39, Proposition 4.11]{Wa}.
Hence $\bigoplus_{e\mid n} C(\bm{Z}[\zeta])=0$.
By Theorem \ref{t4.2} again we find $F_\pi=0$.
\end{proof}

\begin{proof}[Proof of Theorem \ref{t1.7}] --------------------~\par
(i) By Theorem \ref{t5.2}, $F_\pi=0$. Thus $[M]^{fl}=0$ for any $\pi$-lattice $M$. Apply Lemma \ref{l4.12}.

(ii) By Theorem \ref{t5.2}, $\bm{Z}[\zeta_n]$ is a UFD. Apply Theorem \ref{t1.1}. Note that we prove that $k(G)$ is not only stably rational, but also rational.
\end{proof}

\begin{theorem} \label{t5.3}
Let $\pi \simeq D_n$ the dihedral group of order $2n$ where $n$ is an odd integer. Then $F_\pi=0$ if and only if $\bm{Z}[\zeta_n + \zeta_n^{-1}]$ is a UFD.
\end{theorem}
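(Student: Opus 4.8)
The plan is to compute the flabby class group $F_\pi$ for $\pi=D_n$ directly, in the spirit of Theorem \ref{t4.2} for cyclic groups but with the full cyclotomic rings replaced by their maximal real subrings. Write $\pi=\langle\sigma,\tau:\sigma^n=\tau^2=1,\ \tau\sigma\tau^{-1}=\sigma^{-1}\rangle$. Since $n$ is odd, the Sylow $2$-subgroup of $\pi$ has order $2$ and every Sylow $p$-subgroup for $p\mid n$ lies in the cyclic group $\langle\sigma\rangle$; hence every Sylow subgroup of $\pi$ is cyclic and $\pi$ is a metacyclic $Z$-group. This is precisely the situation covered by the Endo--Miyata structure theorem \cite[Theorem 3.3]{EM2}, which I would invoke to identify $F_\pi$ with a direct sum of ideal class groups indexed by the rational irreducible representations of $\pi$.

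First I would record the Wedderburn decomposition $\bm{Q}[D_n]\simeq \bm{Q}\times\bm{Q}\times\prod_{1<d\mid n} M_2(\bm{Q}(\zeta_d+\zeta_d^{-1}))$: besides the trivial and sign characters, each divisor $d\mid n$ with $d>1$ contributes the two-dimensional representation $\sigma\mapsto\fn{diag}(\zeta_d,\zeta_d^{-1})$, $\tau\mapsto\bigl(\begin{smallmatrix}0&1\\1&0\end{smallmatrix}\bigr)$, whose character takes the real values $\zeta_d^k+\zeta_d^{-k}$, so that its character field is the maximal real subfield $\bm{Q}(\zeta_d)^+=\bm{Q}(\zeta_d+\zeta_d^{-1})$ and its Schur index is $1$. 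The maximal order in each matrix block is $M_2(\bm{Z}[\zeta_d+\zeta_d^{-1}])$, which is Morita equivalent to $\bm{Z}[\zeta_d+\zeta_d^{-1}]$ and therefore has ideal class group $C(\bm{Z}[\zeta_d+\zeta_d^{-1}])$. Feeding this decomposition into \cite[Theorem 3.3]{EM2}, the two $\bm{Q}$-blocks (and the degenerate divisor $d=1$) involve only principal ideal domains and contribute nothing, and I expect to obtain $F_{D_n}\simeq\bigoplus_{1<d\mid n} C(\bm{Z}[\zeta_d+\zeta_d^{-1}])$. In particular $F_{D_n}=0$ if and only if $C(\bm{Z}[\zeta_d+\zeta_d^{-1}])=0$ for every $d\mid n$.

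It then remains to see that these class groups all vanish precisely when $\bm{Z}[\zeta_n+\zeta_n^{-1}]$ is a UFD, and this is the same bookkeeping as in the direct proof of Theorem \ref{t5.2}. Since $\bm{Z}[\zeta_n+\zeta_n^{-1}]$ is the ring of integers of the real cyclotomic field $\bm{Q}(\zeta_n)^+$, it is a Dedekind domain, so it is a UFD if and only if its class group is trivial. If $F_{D_n}=0$, the summand $d=n$ already gives $C(\bm{Z}[\zeta_n+\zeta_n^{-1}])=0$, whence the UFD property. Conversely, for each $d\mid n$ we have $\bm{Q}(\zeta_d)^+\subseteq\bm{Q}(\zeta_n)^+$, and the class number divisibility $h_d^+\mid h_n^+$ (the real analogue of the inclusion argument used for Theorem \ref{t5.2} via \cite[Proposition 4.11]{Wa}) shows that $h_n^+=1$ forces $h_d^+=1$ for all $d\mid n$; hence every summand vanishes and $F_{D_n}=0$.

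The main obstacle is the second paragraph: making the Endo--Miyata identification fully rigorous for the non-abelian group $D_n$, where Theorem \ref{t4.2} is no longer directly applicable. Concretely I must check that (i) each two-dimensional block really has Schur index $1$ and character field $\bm{Q}(\zeta_d+\zeta_d^{-1})$, (ii) the Morita reduction correctly identifies the class group of $M_2(\bm{Z}[\zeta_d+\zeta_d^{-1}])$ with $C(\bm{Z}[\zeta_d+\zeta_d^{-1}])$, and (iii) passing from the group ring to $F_{D_n}$ contributes no extra kernel or cokernel from permutation lattices that could alter the vanishing criterion. Once \cite[Theorem 3.3]{EM2} is applied correctly, the remaining class-number argument of the third paragraph is routine.
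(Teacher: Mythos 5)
Your proposal is correct and follows essentially the same route as the paper: both rest on the Endo--Miyata theorem \cite[Theorem 3.3]{EM2} identifying $F_{D_n}$ with the class group of a maximal order in $\bm{Q}[D_n]$, decompose that class group as $\bigoplus_{d\mid n}C(\bm{Z}[\zeta_d+\zeta_d^{-1}])$, and conclude with the class-number divisibility $h_d^+\mid h_n^+$ from \cite[Proposition 4.11]{Wa}. The only difference is cosmetic: you derive the decomposition by hand from the Wedderburn decomposition of $\bm{Q}[D_n]$ and Morita equivalence, whereas the paper simply cites \cite[Theorem 6.3]{EK} for it.
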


\begin{proof}
As in the proof of Theorem \ref{t5.2}, let $T(\pi)$ be defined in \cite[page 86; EK, Definition 1.3]{EM2}. Then we have $F_{\pi} \simeq T(\pi) \simeq C(\Omega_{\bm{Z}\pi})$ where $\Omega_{\bm{Z}\pi}$ is a maximal $\bm{Z}$-order in $\bm{Q}\pi$ containing $\bm{Z}\pi$ and $C(\Omega_{\bm{Z}\pi})$ is the class group of $\Omega_{\bm{Z}\pi}$ \cite[Theorem 3.3; EK, Theorem 1.4]{EM2}. It can be shown that $C(\Omega_{\bm{Z}\pi}) \simeq \oplus_{d \mid n} C(\bm{Z}[\zeta_d + \zeta_d^{-1}])$ (see, for examples, \cite[Theorem 6.3]{EK}).

If $F_{\pi}=0$, then $C(\bm{Z}[\zeta_n + \zeta_n^{-1}])=0$.

On the other hand, if $C(\bm{Z}[\zeta_n + \zeta_n^{-1}])=0$, then $C(\bm{Z}[\zeta_d + \zeta_d^{-1}])=0$ for all $d \mid n$ by \cite[page 39, Proposition 4.11]{Wa}. Thus $F_{\pi}=0$.
\end{proof}

\begin{lemma} \label{l5.4}
Let $\pi \simeq D_n$ the dihedral group of order $2n$ where $n$ is an odd integer. Assume that $\bm{Z}[\zeta_n + \zeta_n^{-1}]$ is a UFD.

(i) If $M$ is any $\pi$-lattice and $k$ is a field with $\zeta_n \in k$, then $k(M)^{\pi}$ is stably rational over $k$.

(ii) Suppose that $G:=A \rtimes D_n$ where $A$ is a finite abelian group of exponent $e$ and $n$ is an odd integer. If $k$ is a field with $\zeta_e, \zeta_n \in k$, then $k(G)$ is stably rational over $k$.
\end{lemma}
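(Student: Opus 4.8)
The plan is to deduce both parts from the vanishing of the flabby class monoid $F_\pi$, exactly as Theorem \ref{t1.7} was deduced in the cyclic case, with Theorem \ref{t5.3} now playing the role of Theorem \ref{t5.2} and a dihedral analogue of Lemma \ref{l4.12} playing the role of the cyclic Lemma \ref{l4.12}. For (i), since $\bm{Z}[\zeta_n+\zeta_n^{-1}]$ is a UFD, Theorem \ref{t5.3} gives $F_\pi=0$, so every $\pi$-lattice $M$ satisfies $[M]^{fl}=0$ in $F_\pi$ by Definition \ref{d2.4}. I would then transcribe the proof of Lemma \ref{l4.12}: set $\pi'=\{\lambda\in\pi:\lambda \text{ acts trivially on } M\}$ and $\pi''=\pi/\pi'$, so that $M$ is a faithful $\pi''$-lattice with $[M]^{fl}=0$ as a $\pi''$-lattice by the descent lemma \cite[page 180, Lemma 2]{CTS}; adjoin the regular permutation lattice $M'=\bm{Z}\pi''$; and use the No-Name Lemma (Theorem \ref{t2.6}) together with Theorem \ref{t2.5}(1) to see that $k(M\oplus M')^{\pi''}=K(M)^{\pi''}$ (where $K=k(M')$ carries a faithful $\pi''$-action) is stably rational over $k(M')^{\pi''}$, while simultaneously $k(M\oplus M')^{\pi''}=k(M)^{\pi''}(w_1,\ldots,w_e)$ is rational over $k(M)^{\pi''}$.

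The one place where the cyclic argument does not transcribe is the final appeal to Fischer's Theorem \ref{t1.5}, which was used to make the base field $k(M')^{\pi''}=k(\pi'')$ rational; for dihedral $\pi''$ Fischer does not apply, and I expect this to be the main obstacle. Its resolution uses that $n$ is odd: the normal subgroups of $\pi=D_n$ are $\{1\}$, the groups $C_d$ with $d\mid n$, and $D_n$ itself, so the faithful quotient $\pi''$ is either trivial, $C_2$, or again a dihedral group $D_{n'}$ with $n'\mid n$. Thus it suffices to know that Noether's problem for a dihedral group is stably rational over $k$ when $\zeta_{n'}\in k$. I would supply this through the explicit faithful two-dimensional monomial representation $r\mapsto\mathrm{diag}(\zeta_{n'},\zeta_{n'}^{-1})$, $s\mapsto\left(\begin{smallmatrix}0&1\\1&0\end{smallmatrix}\right)$ of $D_{n'}=\langle r,s\rangle$: a short direct computation (pass to the rotation invariants $u=xy$ and $v=x^{n'}$, then handle the induced involution $v\mapsto u^{n'}/v$) shows that the fixed field is purely transcendental in two variables, hence rational, and by the stable equivalence of the fixed fields of distinct faithful representations, $k(\pi'')$ is stably rational over $k$. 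Feeding this in at the Fischer step, and using $k(M)^\pi=k(M)^{\pi''}$ for the purely monomial action, completes (i).

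For (ii), I would run the linearization of Theorem \ref{t4.3}. Writing $G=A\rtimes D_n$ and $k(G)=k(V_{\mathrm{reg}})^G$, I use $\zeta_e\in k$ to diagonalize the $A$-action, producing eigenvectors indexed by the characters of $A$ on which $D_n$ acts through its action on $A$; as in Step 1 of the proof of Theorem \ref{t4.3}, the No-Name Lemma strips off the remaining variables and identifies $k(G)$, up to adjoining algebraically independent elements, with $k(M)^{D_n}$ for a suitable $D_n$-lattice $M$ built from the character group of $A$. Since $F_{D_n}=0$ by Theorem \ref{t5.3}, we have $[M]^{fl}=0$ for this (indeed for any) lattice $M$, so part (i) makes $k(M)^{D_n}$ stably rational over $k$, whence $k(G)$ is stably rational over $k$. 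The advantage of routing (ii) through (i) is that the precise shape of $M$ never needs to be computed: the vanishing of $F_{D_n}$ disposes of all $D_n$-lattices at once, just as $F_{C_n}=0$ did in the cyclic case.
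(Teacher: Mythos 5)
Your proposal is correct, and its overall architecture coincides with the paper's: both deduce $F_\pi=0$ from Theorem \ref{t5.3}, transcribe the proof of Lemma \ref{l4.12}, observe that for odd $n$ the faithful quotient $\pi''$ is trivial, $C_2$, or dihedral, and thereby reduce part (i) to the (stable) rationality of $k(\pi'')$; part (ii) is handled identically, by linearizing as in the proofs of Theorems \ref{t4.3} and \ref{t1.6} and then invoking part (i). The one genuine divergence is the base case $k(D_m)$. The paper linearizes $D_m=\langle\sigma_0,\tau\rangle$ along its cyclic normal subgroup exactly as in Step 1 of the proof of Theorem \ref{t4.3}, reducing $k(D_m)$ to $k(M)^{C_2}$ for a projective ideal $M\subset\bm{Z}C_2$, then invokes Reiner's theorem that every projective $\bm{Z}C_2$-ideal is free so as to land on $k(C_2)$, which it computes to be rational by hand (thus obtaining genuine rationality of $k(\pi'')$, with a shortcut via Theorem \ref{t1.1} available when $\fn{char}k\neq 2$). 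You instead take the faithful two-dimensional monomial representation of $D_{n'}$, compute its fixed field directly (rotation invariants $u=xy$, $v=x^{n'}$, then the involution $v\mapsto u^{n'}/v$, whose fixed field is $k(u,\,v+u^{n'}/v)$), and transfer the conclusion to $k(V_{\rm reg})^{D_{n'}}$ by the stable equivalence of fixed fields of faithful representations, i.e.\ two applications of Theorem \ref{t2.6} to $k(V\oplus V_{\rm reg})$. Your route yields only stable rationality of $k(\pi'')$, but that is all the lemma requires; it buys you independence from Reiner's theorem at the cost of the stable-equivalence step, which is standard but should be stated explicitly since the paper only records Theorem \ref{t2.6} and not this corollary. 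Both arguments are valid.
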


\begin{proof}
By Theorem \ref{t5.3}, $F_\pi=0$.

(i) Since $F_\pi=0$, we find that $[M]^{fl}=0$. The idea of showing that $k(M)^{\pi}$ is stably rational is almost the same as that of Lemma \ref{l4.12}. It remains to prove that $k(\pi^{\prime \prime})$ is rational  where $\pi^{\prime \prime}$ is some quotient group of $\pi$.

Since $n$ is odd, the quotients groups of $\pi \simeq D_n$ are isomorphic to $D_m$ ($m$ divides $n$), $C_2$, or the trivial group.

Suppose that $\pi^{\prime \prime}$ is a dihedral group. Write $\pi^{\prime \prime}= \langle \sigma_0, \tau:\sigma_0^{m}=\tau^2=1, \tau^{-1} \sigma_0 \tau = \sigma_0^{-1} \rangle$ where $m$ is some integer dividing $n$. Define $\pi_0 := \langle \tau \rangle \simeq C_2$. If char $k \neq 2$, apply Theorem \ref{t1.1}. Here is a proof for the general case.

Since $\zeta_n \in k$, we may use the same arguments as in Step 1 of the proof of Theorem \ref{t4.3}. We find that $k(\pi^{\prime \prime})$ is rational over $k(M)^{\pi_0}$ where $M$ is a projective ideal of $\bm{Z}\pi_0$. Since $\pi_0 \simeq C_2$, any projective ideal of $\bm{Z}\pi_0$ is isomorphic to the free module $\bm{Z}\pi_0$ by Reiner's Theorem \cite{Re}. It follows that $k(M)^{\pi_0} \simeq k(C_2) =k(x,y)^{\pi_0}$ where $\tau: x \mapsto y \mapsto x$. Define $X=y/x$. Then $k(x,y)^{\pi_0}=k(X,x)^{\pi_0}$ is rational over $k(X)^{\pi_0}$ by Theorem \ref{t2.6}. Note that $k(X)^{\pi_0}=k(X/(1+X^2))$ is rational. Done.

If $\pi^{\prime \prime} \simeq C_2$, the rationality of $k(C_2)$ has been proved in the above paragraph.

\medskip
(ii) By the same method as in the proof of Theorem \ref{t1.6} (given in Section $4$), it can be shown that $k(G)$ is rational over $k(M)^{D_n}$ where $M$ is some $D_n$-lattice. Then apply the result of Part (i).
\end{proof}

\begin{remark}
Let $\pi=D_n$, the dihedral group of order $2n$. It is important that we assume that $n$ is odd in Theorem \ref{t5.3} and Lemma \ref{l5.4}. If $4 \mid n$, by \cite{Ba}, there exist $\pi$-lattices $M$ such that the unramified Brauer groups of $\bm{C}(M)^{\pi}$ are not zero; thus $\bm{C}(M)^{\pi}$ are not retract rational and hence are not stably rational. For a concrete construction of such lattices, see \cite[Theorem 6.2]{HKY}. If $2 \mid n$ and $4 \nmid n$, by \cite{Ba} again, the unramified Brauer group of $\bm{C}(M)^{\pi}$ is zero for any $\pi$-lattice $M$, but it is unknown whether $\bm{C}(M)^{\pi}$ is always stably rational.
\end{remark}

\newpage

\end{document}